\documentclass[10pt]{article}
\usepackage{latexsym,amsfonts,amsmath,amssymb,array,tabularx,theorem}
\usepackage{graphicx}
\usepackage{caption}
\usepackage{subcaption}
\usepackage{comment}
\usepackage[normalem]{ulem}
\usepackage[pdfpagelabels]{hyperref}
\usepackage{bbm}
\usepackage{authblk}
\usepackage{bm}
\usepackage{mathtools} 
\usepackage{dsfont} 
\usepackage{algpseudocode}
\usepackage[section]{algorithm}

\usepackage[normalem]{ulem}

\usepackage{graphicx}
\usepackage{epstopdf}
\usepackage[all]{xy}
\DeclareFontFamily{OT1}{pzc}{}
\DeclareFontShape{OT1}{pzc}{m}{it}{<-> s * [1.10] pzcmi7t}{}
\DeclareMathAlphabet{\mathpzc}{OT1}{pzc}{m}{it}

\newcommand{\VI}{\boldsymbol{I}}
\newcommand{\VL}{{\boldsymbol{L}}}
\newcommand{\VA}{{\boldsymbol{A}}}
\newcommand{\VB}{{\boldsymbol{B}}}
\newcommand{\VC}{{\boldsymbol{C}}}
\newcommand{\LS}{{\mathrm{L}\mathrm{S}}}

\newcommand{\bsigma}{{\boldsymbol{\sigma}}}
\newcommand{\btau}{{\boldsymbol{\tau}}}
\newcommand{\bg}{{\boldsymbol{g}}}
\usepackage{float}
\setlength{\bigskipamount}{5ex plus1.5ex minus 2ex}
\setlength{\textheight}{22cm} \setlength{\textwidth}{14.5cm}
\setlength{\hoffset}{-1.3cm} \setlength{\voffset}{-1.8cm}

\newtheorem{theorem}{Theorem}[section]
\newtheorem{lemma}[theorem]{Lemma}

\newtheorem{proposition}[theorem]{Proposition}
\newtheorem{remark}[theorem]{Remark}
\newtheorem{definition}[theorem]{Definition}

\newenvironment{proof}{\begin{trivlist}
    \item[\hskip\labelsep{\bf Proof.}]}{$\hfill\Box$\end{trivlist}}

{\theoremstyle{plain} \theorembodyfont{\rmfamily}
}

\numberwithin{equation}{section}
\numberwithin{figure}{section}

\allowdisplaybreaks[1]


\newcommand{\bsa}{{\boldsymbol{a}}}

\newcommand{\bstau}{{\boldsymbol{\tau}}}

\newcommand{\bszero}{{\boldsymbol{0}}}
\newcommand{\bsb}{{\boldsymbol{b}}}

\newcommand{\bsf}{{\boldsymbol{f}}}

\newcommand{\bsp}{{\boldsymbol{p}}}
\newcommand{\bsu}{{\boldsymbol{u}}}
\newcommand{\bsv}{{\boldsymbol{v}}}
\newcommand{\bsw}{{\boldsymbol{w}}}

\newcommand{\bsq}{{\boldsymbol{q}}}

\newcommand{\bsz}{{\boldsymbol{z}}}

\newcommand{\bseps}{\boldsymbol\varepsilon}

\newcommand{\bsnul}{{\boldsymbol{0}}}
\newcommand{\bsvarrho}{{\boldsymbol{\varrho}}}

\newcommand{\bbR}{\mathbb{R}}
\newcommand{\bbD}{\mathbb{D}}
\newcommand{\bbT}{\mathbb{T}}
\newcommand{\bbL}{\mathbb{L}}
\newcommand{\bbV}{\mathbb{V}}

\newcommand{\bbX}{\mathbb{X}}
\newcommand{\R}{\mathbb{R}}

\newcommand{\N}{\mathbb{N}}

\newcommand{\bbP}{\mathbb{P}}
\newcommand{\calA}{\mathcal{A}}

\newcommand{\calF}{\mathcal{F}}

\newcommand{\calC}{\mathcal{C}}
\newcommand{\calL}{\mathcal{L}}

\newcommand{\calI}{\mathcal{I}}
\newcommand{\calM}{\mathcal{M}}

\newcommand{\calNN}{\mathcal{NN}}

\newcommand{\calT}{\mathcal{T}}

\newcommand{\calE}{\mathcal{E}}

\newcommand{\calV}{\mathcal{V}}
\newcommand{\calU}{\mathcal{U}}

%


\newcommand{\mask}[1]{{}}
\usepackage[usenames]{color}
\definecolor{dkgreen}{rgb}{0.0, 0.5, 0.0}
\definecolor{orange}{rgb}{1,0.5,0}
\definecolor{turquoise}{rgb}{0,0.7,0.7}

\newcommand{\norm}[2][]{\| #2 \|_{#1}}

\newcommand{\snorm}[2][]{| #2 |_{#1}}
\newcommand{\snormc}[2][]{\left| #2 \right|_{#1}}
\newcommand{\setc}[2]{\left\{#1\, :\,#2\right\}}
\newcommand{\set}[2]{\{#1\,:\,#2\}}

\newcommand{\domain}{{\mathrm{D}}}
\newcommand{\spatialdomain}{\mathrm{G}}
\newcommand{\Sop}[1]{\operatorname{S}^{\rm 1}_{#1}}

\newcommand{\Szp}[1]{\operatorname{S}^{\rm 0}_{#1}}
\newcommand{\So}{\operatorname{S}_1}
\newcommand{\RT}{\operatorname{RT}_0}
\newcommand{\Sz}{\operatorname{S}_0}
\DeclareMathOperator{\trace}{tr}
\DeclareMathOperator{\divv}{div}
\newcommand{\Ne}{\operatorname{N}_{0}}

\DeclareMathOperator{\curl}{curl}
\newcommand{\Ned}{N\'ed\'elec }


\def\cA{{\cal A}}

\def\cT{{\cal T}}

\newcommand{\be}{\begin{equation}}
\newcommand{\ee}{\end{equation}}
\newcommand{\bea}{\begin{eqnarray}}
\newcommand{\eea}{\end{eqnarray}}
\newcommand{\beas}{\begin{eqnarray*}}
\newcommand{\eeas}{\end{eqnarray*}}

\DeclareMathOperator*{\essinf}{ess\,inf}

%
\newcommand{\relu}{{\rho}}
\newcommand{\heavi}{{\sigma}}

\newcommand{\realiz}[1]{{\rm R}(#1)} 
\newcommand{\depth}{L}
\newcommand{\size}{M}
\newcommand{\Parallel}[1]{{\rm P}(#1)} 


\newcommand{\sconc}{\odot}

\newcommand{\sizefirst}{\size_{\operatorname{in}}}
\newcommand{\sizelast}{\size_{\operatorname{out}}}

\DeclareMathOperator{\conv}{conv}
\DeclareMathOperator{\interior}{int}

\newcommand{\ths}{\theta^*}
\newcommand{\thh}{\hat\theta}

\newcommand{\bsn}{{\bm n}}
\newcommand{\Id}{{\rm Id}}

\newcommand{\control}{X}
\newcommand{\ad}{\mathrm{ad}}

\algnewcommand\algorithmicinput{\textbf{Input:}}
\algnewcommand\Input{\item[\algorithmicinput]}
\algnewcommand\algorithmicoutput{\textbf{Output:}}
\algnewcommand\Output{\item[\algorithmicoutput]}
\algnewcommand\algorithmicloopstart{\textbf{Loop:}}
\algnewcommand\Loopstart{\item[\algorithmicloopstart]}

\newcommand{\Solve}{{\tt Solve}}
\newcommand{\Estimate}{{\tt Estimate}}
\newcommand{\Mark}{{\tt Mark}}
\newcommand{\Refine}{{\tt Refine}}

\begin{document}

\bibliographystyle{abbrv}
\title{First Order System Least Squares Neural Networks}
\author{Joost A. A. Opschoor\thanks{ASML and Seminar for Applied Mathematics, ETH Z\"{u}rich, R\"{a}mistrasse 101, CH--8092 Z\"urich, Switzerland, \texttt{joost.opschoor@sam.math.ethz.ch}},  Philipp C. Petersen \thanks{Faculty of Mathematics and Research Network Data Science, University of Vienna, Kolingasse 14-16, 1090 Vienna, \texttt{philipp.petersen@univie.ac.at}},  Christoph Schwab \thanks{ Seminar for Applied Mathematics, ETH Z\"{u}rich, R\"{a}mistrasse 101, CH--8092 Z\"urich, Switzerland, \texttt{christoph.schwab@sam.math.ethz.ch}}}
%

%
\maketitle
\date{}
\begin{abstract}
We introduce a conceptual framework for numerically solving 
linear elliptic, parabolic, and hyperbolic PDEs 
on bounded, polytopal domains in euclidean spaces by deep neural networks. 
The PDEs are recast as minimization of a 
least-squares (LSQ for short) residual of an 
equivalent, well-posed first-order system,
over parametric families of deep neural networks. 
The associated LSQ residual is 
a) equal or proportional to a weak residual of the PDE, 
b) additive in terms of contributions from localized subnetworks,
   indicating locally ``out-of-equilibrium'' of neural networks 
   with respect to the PDE residual,
c) serves as numerical loss function for neural network training,
and 
d) constitutes, even with incomplete training, a 
computable, (quasi-)optimal numerical error estimator 
in the context of adaptive LSQ finite element methods.
In addition, 
an adaptive neural network growth strategy is proposed which,
assuming exact numerical minimization of the LSQ loss functional, 
yields sequences of neural networks with 
realizations that converge rate-optimally to the 
exact solution of the first order system LSQ formulation.
\end{abstract}

\noindent
{\bf Key words:}
Neural Networks, FoSLS, Adaptivity, De Rham Complex, Finite Elements

\noindent
{\bf Subject Classification:}
65M60, 
65N30, 
65N50, 
49M41, 
35J46, 
35L40 

\section{Introduction}
\label{sec:Intro}
%
Numerous recently proposed algorithms for deep neural network (NN) training 
in connection with partial differential equations (PDEs) in physical domains
$\domain$
are derived from energies in variational principles
(e.g. \cite{EYuDeepRitz, AinsDong, ainsworth2024extended}) 
even for linear, deterministic and well posed PDEs.
Loss functions can be based on classical- (i.e. pointwise), 
weak- and variational-formulations of the PDE and corresponding residuals,
see e.g. the overview in \cite[Section 2.3]{DM2024}.
Importantly, adopting ``convenient'' and ``intuitive'' loss functions
may entail implicit unrealistic assumptions on solution regularity, whereas
mathematically correct loss functions may entail the necessity 
of numerical evaluation of ``inconvenient'', nonlocal Sobolev norms of
NN residuals in the PDE of interest. 
Here, we indicate advantages of NN approximations
based on a first order system least squares (FoSLS) reformulation
of the underlying PDE. 
As we show, FoSLS variational formulations 
of PDEs lead to $L^2(\domain)$-NN residuals which, for NN emulations 
of spaces in the DeRham complex \eqref{derham}, \eqref{discderham} below, 
are easily numerically evaluated
exactly (up to unavoidable data oscillation errors):
based on \cite{LODSZ22_991}, 
we present compatible, structure preserving NN emulations 
of FoSLS methods for a wide range of boundary- and initial-boundary
value problems of PDEs.
\subsection{Existing Work}
\label{sec:PreWrk}
Recently, computational NN approximations of PDEs that are based 
on \emph{variational formulations} of PDEs (unlike e.g. the PiNN 
formulations, which require additional assumptions for error bounds
\cite{zeinhofer2024unifiedframeworkerroranalysis})
have received considerable attention. 
Besides the so-called ``deep Ritz'' approaches \cite{EYuDeepRitz}
which minimize
variational energy functionals over NN-based, admissible 
parametric approximations, further lines of research which are
directly motivated by variational discretizations are, 
e.g., \cite{TJRVarMio24,AinsDong,ainsworth2024extended}.

\emph{Least Squares formulations of elliptic and parabolic PDEs}
which are variationally consistent, i.e., which admit unique
variational solutions that are identical to finite energy 
solutions resulting from, e.g., minimizing variational principles
over suitable sets of admissible functions,
have likewise received
considerable attention from a numerical analysis perspective,
in particular least squares formulations with $L^2$-based residuals 
\cite{BoGuLSQ}.
We mention the recent \cite{GaStLSQ} and references there for space-time 
LSQ approximation of parabolic PDEs, and \cite{FK23} for space-time
LSQ for the acoustic wave equation in a space-time domain.
Recently, a rather complete error analysis of LSQ FEM for 
linear, elliptic PDEs in bounded domains $\domain$ 
has been given in \cite{BB2023}.
To avoid undue regularity for the solution, in \cite{BB2023}
first order LSQ formulation of second order PDEs have been advocated.
This requires 
re-writing as a first order system (whence the name ``FoSLs'' for
the LSQ formulations). This involves, generally, the
use of so-called De Rham compatible spaces which we review below.

We also mention relevance 
of \emph{computable NN residuals} 
in numerical NN quality assessment and 
in loss-function design for practical NN training.
The present LSQ-based approach
uses $L^2(\domain)$-residuals of NNs in LSQ formulations of the PDE under consideration.
Approaches of this kind are referred also to as ``variational neural regression''
\cite{bachmayr2024variationallycorrectneuralresidual}.
These are not limited to PDEs, but also allow for
neural approaches to boundary integral equations (BIEs). 
However,
nonlocal, non-additive fractional Sobolev 
norms of variational BIE residuals forces 
\emph{loss-function design based on local, computable, a-posteriori error estimators}, 
see, eg., \cite{AHS22_989}.

Adaptive NN-growth in numerical PDE solution approximation has recently 
been proposed e.g. in \cite{dang2024adaptivegrowingrandomizedneural}, 
and the references there. While numerical evidence is provided for 
improved solution approximation, no theory of optimality of the proposed
methodologies is indicated.
\subsubsection{De Rham Compatible Spaces}
\label{sec:introfem}
On contractible $\domain \subset \R^d$, $d\geq 2$, 
the following sequence is exact
(e.g.\ \cite[Proposition 16.14]{ErnGuermondBookI2021}):
\begin{equation}\label{derham}
  \xymatrix{
    \R\ar^-{i}[r] &
    H^1(\domain)\ar^-{\operatorname{grad}}[r] &
    H(\curl,\domain) \ar^-{\curl}[r] &
    H(\divv,\domain)  \ar^-\divv[r] &
    L^2(\domain) \ar^-o[r] & \{0\}.
  }
\end{equation}
Here, the tag $i$ denotes `injection' and the tag $o$ denotes the zero operator.
This exact sequence is an example of a (more generally defined)
\emph{de Rham complex}.
Finite dimensional subspaces preserving this structure are usually
required to fit into a \emph{discrete de Rham complex}
(e.g.\ \cite[Proposition 16.15]{ErnGuermondBookI2021})
\begin{equation}\label{discderham}
  \xymatrix{
    \R\ar^-{i}[r] &
    \So(\mathcal{T},\domain) \ar^-{\operatorname{grad}}[r] &
    \Ne(\mathcal{T},\domain) \ar^-{\curl}[r] &
    \RT(\mathcal{T},\domain) \ar^-\divv[r] &
    \Sz(\mathcal{T},\domain) \ar^-o[r] &
    \{0\}.
  }
\end{equation}
Here, for a given, regular, simplicial partition
$\mathcal{T}$ of a contractible, polytopal domain $\domain$, 
$\So(\mathcal{T},\domain)$ stands for the class of
continuous piecewise linear functions 
($\Sop{1}$ in the notation of \cite{LODSZ22_991}),
$\Sz(\mathcal{T},\domain)$
stands for the class of piecewise constant functions
($\Szp{0}$ in the notation of \cite{LODSZ22_991}),
and
$\RT(\mathcal{T},\domain)$, $\Ne(\mathcal{T},\domain)$ 
denote lowest order
Raviart-Thomas and \Ned spaces, respectively 
(see, e.g., \cite{ErnGuermondBookI2021} or \cite{LODSZ22_991} 
for precise definitions).
The Finite Element spaces (FE for short) 
from \eqref{discderham} have the advantage of being
\emph{conforming}, i.e.,  each finite dimensional space 
is contained in the respective Sobolev space in \eqref{derham}.
Furthermore, the ($\calT$-dependent) projections
$\Pi_{\So},\Pi_{\Ne},\Pi_{\RT},\Pi_{\Sz}$ on these subspaces
introduced in \cite[Sec.\ 19.3]{ErnGuermondBookI2021} 
commute with the differential operators 
as shown in the following diagram \cite[Lemma 19.6]{ErnGuermondBookI2021}:
$$
\xymatrix{
  H^1(\domain)\ar^-{\operatorname{grad}}[r] \ar^{\Pi_{\So}}[d] &
  H^0(\curl,\domain) \ar^-{\curl}[r] \ar^{\Pi_{\Ne}}[d] &
  H^0(\divv,\domain) \ar^-\divv[r] \ar^{\Pi_{\RT}}[d] & L^2(\domain)
  \ar^{\Pi_{\Sz}}[d]
  \\
  \So(\mathcal{T},\domain) \ar^-{\operatorname{grad}}[r] &
  \Ne(\mathcal{T},\domain) \ar^-{\curl}[r] & \RT(\mathcal{T},\domain)
  \ar^-\divv[r] & \Sz(\mathcal{T},\domain)
}
$$
For these reasons we say that the spaces in \eqref{discderham} are \emph{de Rham compatible}.
While DeRham compatibility of approximation architectures and hypothesis classes
can be considered a convenient structural feature in the FoSLs reformulation
of scalar, elliptic and parabolic PDE in polytopal domains $\domain$, 
it is known to be essential in achieving variationally correct, convergent
approximations of solutions of field equations with constraints, such as the time-harmonic Maxwell equations
in electromagnetics \cite{CostabelCoerCMaxw}.
The corresponding FE spaces on triangulations $\cT$
of $\domain$ have been emulated by NN feature spaces in \cite{LODSZ22_991}.
\subsubsection{NN Emulation}
\label{sec:intronnfem}
The connection between NNs with 
Rectified Linear Unit (ReLU for short) activation 
and continuous piecewise linear (CPwL for short)
spline approximation spaces has been known for some time: 
\emph{nodal discretizations} based on CPwL Finite Element Methods (FEM) 
can be emulated by ReLU NNs 
(e.g.\ as introduced in \cite{ABMM2016} and \cite{HLXZ2020}):
the NNs' feature spaces comprise (NN emulations of) FE basis functions.

In \cite[Section~5]{LODSZ22_991}, 
we developed NN feature spaces of FE spaces 
\emph{exactly} for the DeRham compatible FE spaces 
$\So(\cT,\domain)$, $\Ne(\cT ,\domain)$, $\RT(\cT ,\domain)$ and $\Sz(\mathcal{T},\domain)$
on arbitrary regular, simplicial partitions $\cT$ of 
polytopal domains $\domain\subset \R^d$, $d\in\{2,3\}$.
This is the basis of the deep LSQ approach.

The NN basis emulations constructed in \cite{LODSZ22_991} 
are 
based on a combination of ReLU and BiSU (Binary Step Unit) activations.
We underline that our construction of NNs which emulate, in
particular, the classical ``Courant Finite Elements''
$\So(\mathcal{T},\domain)$, as well as $\Sz(\mathcal{T},\domain)$
and $\RT(\mathcal{T},\domain)$, applies on polytopal domains $\domain$
of any dimension $d\geq 2$.  
For the practically relevant space
$\So(\mathcal{T},\domain)$, in \cite{LODSZ22_991} we provided
ReLU NN constructions in arbitrary, finite dimension
$d\geq 2$ (the univariate case $d=1$ being trivial).
Apart from regularity of the
simplicial partition $\calT$ of $\domain$, no further constraints of
geometric nature are imposed on $\calT$.
As it is well-known in numerical analysis, however, 
convergence rate estimates or reliability and efficiency in adaptive
refinement procedures (which, as we shall show, will translate into
adaptive NN growth strategies) will require shape regularity of the 
partitions $\calT$ (see, e.g., \cite{EG2004} and the references there).

These constructions naturally extend to high order FE spaces.
See \cite[Section 7.1]{LODSZ22_991} for the exact NN emulation
of high order finite elements using ReLU, ReLU$^2$ and BiSU activation.\footnote{
ReLU$^2$ is defined as $x\mapsto \max\{0,x\}^2$ 
and the \emph{Binary Step Unit} (BiSU for short) 
is defined as $x\mapsto1$ for $x>0$ and $x\mapsto 0$ for $x\leq 0$.
For the exact emulation of high order continuous, piecewise polynomial functions,
it suffices to use ReLU and ReLU$^2$ activations, see \cite{HX2023,OS2024}.
For simplicity, we restrict our discussion in Section \ref{sec:NeuLSQ}
to lowest order spaces.
FoSLS NNs based on high order finite elements can be constructed analogously.}
\subsubsection{First Order System Least Squares for NN Approximation}
\label{sec:introfoslsnn}

Early results exploring a first order least squares approach
for neural network approximation include
\cite{CCLL2020,LZCC2022},
where in numerical experiments a discrete loss function was minimized. 
This loss function was obtained from the continuous loss function 
by quadrature and Monte Carlo sampling, 
respectively.
Recently, in \cite[Section 4]{BB2023} it was shown that for $N \in \N$,
considering a discretization of the integrals in the loss function
by Monte Carlo sampling in $N$ points,
it suffices to approximately minimize the discrete loss function
up to a tolerance that decreases to $0$ for $N\to\infty$.
It is then shown that under these conditions, 
the approximate solutions of the discrete problems
converge to the true solution of the PDE.

%
\subsection{Contributions}
\label{sec:Contr}
%
We propose FoSLS NNs as NN architectures for the numerical
approximation of linear elliptic, parabolic and hyperbolic PDEs.
They are based on the variational least squares (LSQ) formulation of the
PDE of interest, which is \emph{variationally correct} under minimal
regularity of the physical fields.
I.e., it is quadratic, strictly convex in the unknown fields, and
admits unique solutions which are known to coincide with the 
physically correct solution of the PDE of interest.
To this end, we propose a design of
FoSLS NNs which are \emph{structure-preserving},
i.e., 
their realizations as functions on a contractible spatial domain $\domain$
or on a spatiotemporal domain 
$\domain = (0,T) \times \spatialdomain$ 
must be de Rham compatible.
These properties result, usually, 
from an \emph{equivalent reformulation of variational principles 
in terms of LSQ functionals for first order systems of PDEs.}

Adopting De Rham compatible Finite Element spaces 
such as those in \eqref{discderham}, \cite{LODSZ22_991}
as feature spaces in FoSLS NNs
results in the \emph{transfer principle}, which allows to leverage 
known mathematical results on 
LSQ FE methods
for the analysis of FoSLS NN approximations.
In particular,
\begin{itemize} 
\item 
\emph{existing LSQ-FE approximation rate bounds 
      transfer} 
to expression rate bounds for the proposed FoSLS-based NNs.
\item 
\emph{homogeneous essential boundary conditions}
can be realized exactly, 
which is a severe issue in other approaches,
see \cite{CCLL2020,LZCC2022,BB2023} 
and the references there.
\item
\emph{physically correct, computable loss functions}: 
as is well-known and as we show, 
physically correct and numerically accessible 
LSQ functionals are available 
for a wide range of PDE boundary- and initial-boundary value problems.
I.e., 
there exist unique minimizers in function spaces of physical relevance 
of the LSQ functionals, 
resulting in \emph{computable numerical loss functions 
which are based on $L^2(\domain)$-norms of the FoSLS NN residuals}.
\item 
\emph{deterministic, high-order numerical quadratures}:
being essentially emulations of piecewise polynomial functions on
regular partitions of the physical domain $\domain$, 
the $L^2(\domain)$-based LSQ residual is numerically accessible via
(high-order) standard numerical integration.
\item 
\emph{computable expression errors in physically relevant norms}:
computable $L^2(\domain)$-based loss functions based on FoSLS formulations
are known to be equivalent to 
NN expression errors in physically relevant norms,
thereby allowing reliable and efficient numerical control of NN approximation errors
(subject to the assumption of polynomial source terms and boundary data, to
 discard data-oscillation error terms). 
\item 
\emph{localized loss functions}: 
additivity of the 
Lebesgue integral over triangulations $\cT$ of $\domain$, 
combined with the locality of differential operators 
implies 
\emph{loss functions composed additively of contributions from localized subnetworks},
corresponding to subdomains of the physical domain $\domain$.
\item
\emph{convergence}: standard (bisection) refinements of the partitions $\cT$
produces sequences of NNs of increasing width 
which are dense in the function spaces underlying
the LSQ form of the PDEs of interest.
\item 
\emph{provably rate-optimal, adaptive NN growth strategies}:
the use of loss functions based on the LSQ functional 
as computable error estimator, with local contributions 
driving adaptive mesh-refining strategies is well established 
 \cite{CCPark2015,CS2018,CollMarkLSQOptRat,PBringmann23}
in the adaptive LSQ finite element method (A-LSQFEM for short).
Combined with the FoSLS NN emulations developed in Section~\ref{sec:NeuLSQ},
this is used in Section~\ref{sec:AFEM} to infer 
\emph{practical algorithms for adaptive NN growth which produce 
sequences of convergent NN approximations of minimizers of LSQ functionals
\cite{FuePraeLSQ}.}
It follows that ALSQFEM-derived, adaptive NN growth strategies 
can result in rate-optimally convergent sequences of NNs, 
with a posteriori (i.e. upon completion of NN training)
numerically verifiable guarantees on error reduction. 
\end{itemize}
\subsection{Further Comparison with  ``Physics-Informed'' DL Approaches}
\label{sec:pinn}
%
We discuss in more detail 
some benefits of the considered
FoSLS-based approach for learning PDE solutions as compared to other
formulations, notably the so-called `PiNN' methodology.
See, e.g., \cite{DM2024} and references there.

Each FoSLS NN is of feedforward-type and 
realizes a de Rham-compatible finite element function
which is specified by the weights and biases of the NN.
We may consider some of these fixed, 
optimizing only the remaining weights and biases.
As usual, these are collected 
in the vector $\theta$ of trainable NN parameters.
In the FoSLS NNs based on \cite{LODSZ22_991},
which exactly emulate de Rham compatible finite elements 
from Section \ref{sec:introfem},
weights and biases in the hidden layers encode 
the mesh connectivity and the node positions.
The finite element shape functions are a basis of the FoSLS NN feature space,
and the NN weights in the output layer 
correspond to the finite element degrees of freedom.
By $\Theta$ we denote the set of all admissible values of $\theta$,
which give FoSLS NNs on regular, 
shape regular,
simplicial triangulations of the polytope $\domain$.
By $U_\theta$ we will denote 
the function realized by the FoSLS NN with weights and biases $\theta\in\Theta$.

We partition 
$\theta = (\theta_{\mathrm{hid}},\theta_{\mathrm{out}}) 
	\in \Theta_{\mathrm{hid}} \times \Theta_{\mathrm{out}} = \Theta$, 
where
$\theta_{\mathrm{out}}$ denotes the trainable weights in the output layer, and
$\theta_{\mathrm{hid}}$ the remaining trainable weights associated with the hidden layers
(recall that we consider only feedforward NNs without skip connections).
The standard FoSLS finite element methodology corresponds to 
only optimizing the output layer for given, 
fixed choices of admissible $\theta_{\mathrm{hid}}\in \Theta_{\mathrm{hid}}$. 
Then, $\Theta_{\mathrm{out}} = \R^N$ with 
$N$ denoting the dimension of the feature space, i.e., 
the number of weights in the output layer.
The linear dependence of the FoSLS residual on $\theta_{\mathrm{out}}$
means that
\emph{at fixed feature space parameters $\theta_{\mathrm{hid}}$,
an optimizer $\ths_{\mathrm{out}} \in \Theta_{\mathrm{out}}$ 
can be found by numerical solution of a linear LSQ problem},
and \emph{nonlinear optimization is not necessary} to determine
$\theta_{\mathrm{out}}$ at given, fixed $\theta_{\mathrm{hid}}$.

Let $\domain\subset\R^d$ for $d\in\N$, $d\geq2$
be a bounded, contractible, polytopal Lipschitz domain.
We explain the FoSLS NN methodology
with the (textbook) example of the Poisson equation 
(in the main body of this paper, we consider a much more general abstract setting,
see Section \ref{sec:AbsSet}, with 
Sections \ref{sec:Source}--\ref{sec:OCP} 
containing numerous concrete examples which are covered by this setting).

A first-order system formulation of the model Poisson equation
$-\Delta u = f$ in $\domain$ reads:
For $f\in L^2(\domain)$
find $u\in H^1_0(\domain)$ and $\bsigma \in H({\rm div};\domain)$
such that
\begin{equation}\label{eq:Poiss}
f+{\rm div}\bsigma = 0, \;\; \nabla u - \bsigma = \bszero \;\;\mbox{in}\;\domain, 
\;\; u= 0 \;\mbox{on}\;\partial\domain \;.
\end{equation}
In terms of the Hilbertian Sobolev space
$\bbV(\domain) := H^1_0(\domain) \times H({\rm div};\domain) $,
it can be written as:
find $U := ( u, \bsigma ) \in\bbV(\domain)$ 
such that
\begin{align*}
\VL U 
	:= \VL \left(\begin{array}{c} u \\ \bsigma \end{array}\right)
	:= \left(\begin{array}{c} -{\rm div} \bsigma \\ \nabla u - \bsigma \end{array}\right)
   = \left(\begin{array}{c} f \\ \bsnul \end{array} \right) 
   =: F \in L^2(\domain)^{d+1} =: \bbL(\domain) 
\;.
\end{align*}
The unique solution $U \in \bbV(\domain)$ of \eqref{eq:Poiss} 
satisfies
\[
U 
= 
{\arg}\min_{ V \in \bbV(\domain)} \| F - \VL V \|^2_{\bbL(\domain)}
= {\arg}\min_{(v,\btau) \in \bbV(\domain)} 
\left\{ 
\| f + {\rm div}\btau \|_{L^2(\domain)}^2
+
\| \nabla v - \btau \|_{L^2(\domain)^d}^2 
\right\} 
\;.
\]
We use \emph{conforming} FoSLS NNs
satisfying $U_\theta\in\bbV(\domain)$ 
for all admissible values $\theta\in\Theta$ of NN parameters $\theta$,
i.e. $\{ U_\theta : \theta \in \Theta\} \subset \bbV(\domain)$.
The error of the FoSLS NN approximation $U_\theta$ in the $\bbV(\domain)$-norm
is
\begin{equation}\label{eq:Err}
0\leq \calE(\theta) 
:= \norm[\bbV(\domain)]{ U - U_\theta } = \norm[\bbL(\domain)]{ F - \VL U_\theta },
\quad \theta \in \Theta \; .
\end{equation}
By \eqref{eq:Err}, the error vanishes if and only if $U_\theta = U$.
\emph{
For any given $U_\theta \ne U$, $\theta \in \Theta$,
\begin{equation}\label{eq:ErrFOSLS}
\calE_{FOSLS}(\theta) := \norm[\bbL(\domain)]{ F - \VL U_\theta }
\end{equation}
is, up to some numerical quadrature, 
a computable expression for the exact error $\calE(\theta)$ 
}
in terms of accessible data $F$, $\VL$ and $U_\theta$,
\emph{in the physically relevant norm} $\| \circ \|_{\bbV(\domain)}$.

For solving the PDE \eqref{eq:Poiss}
using a ``physics-informed'' approach, the least-squares residual
$\calE_{FOSLS}(\theta)$ in \eqref{eq:ErrFOSLS} 
serves as a \emph{numerically accessible loss function}, 
to be minimized computationally.

In all the least-squares formulations herein,
the \emph{computable FoSLS loss function} (up to data oscillation errors) 
is given by
$\calE_{\mathrm{FoSLS}}(\theta) = \norm[\bbL(\domain)]{ F - \VL U_\theta }$
where $\bbL(\domain)$ is generally $L^2(\domain)$, and accessible by
numerical quadrature.  
The FOSLS-loss additionally satisfies 
\begin{align}
\label{eq:introerrorproportional}
\calE_{\mathrm{FOSLS}}(\theta) 
= 
\norm[\bbL(\domain)]{ F - \VL U_\theta } 
\simeq 
\norm[\bbV(\domain)]{ U - U_\theta }
= 
\calE(\theta),
\end{align}
with $\simeq$ denoting either equality (as in the model problem \eqref{eq:Poiss})
or equivalence with absolute constants.

In particular, 
\emph{$\calE_{\mathrm{FoSLS}}(\theta)$ 
vanishes precisely when the exact, weak solution $U \in \bbV(\domain)$
of \eqref{eq:Poiss} is attained}. 
Minimizing 
$\calE_{\mathrm{FoSLS}}(\theta)$ over admissible NN approximations $U_\theta \in \bbV(\domain)$ 
provides a $\norm[\bbV(\domain)]{ \cdot }$-(quasi)optimal approximation 
$U_{\theta^*}$ of $U\in \bbV(\domain)$:
for all $\theta \in \Theta$ holds
\[
\| U - U_{\theta^*} \|_{\bbV(\domain)} 
\leq \| U - U_{\theta} \|_{\bbV(\domain)}   
=    \| F - \VL U_{\theta} \|_{\bbL(\domain)}
= \calE_{\mathrm{FoSLS}}(\theta)
\;.
\]
The \emph{computable numerical residual} $\calE_{\mathrm{FOSLS}}(\theta) = \norm[\bbL(\domain)]{ F - \VL U_\theta }$, 
i.e. 
\emph{the numerical value of the loss function during training}
is a computable  upper bound for the solution approximation error 
$\norm[\bbV(\domain)]{ U - U_\theta }$
in the physically meaningful ``energy'' norm $\norm[\bbV(\domain)]{ \cdot }$
\emph{of any NN approximation} $U_\theta$. 
This is relevant for 
numerical approximations resulting from incomplete training where, as a rule,
$\theta \ne \ths$.

Now, we consider a discretization of $\calE_{\mathrm{FoSLS}}$
and an approximate minimizer of this discretized loss.
As usual, the incurred error can be decomposed into 
an approximation error, generalization gap and an optimization error.
We define the generalization error in terms of the FoSLS loss, i.e.
$\calE_G(\cdot) := \calE_{\mathrm{FoSLS}}(\cdot)$.
Given a set of interpolation points $S \subset \overline\domain$,
we denote by $\calE_T(\cdot,S)$
a discretization of $\calE_G(\cdot)$
which only depends on point values in $S$.
Let $\ths\in\Theta$ be such that $\calE_T(\ths,S)$ is small.
We recall the error decomposition from e.g. \cite[Equation (3.11)]{DM2024}.
For all $\thh\in\Theta$:
\begin{align*}
\calE_G(\ths) 
	\leq \calE_G(\thh) 
		+ 2 \sup_{\theta\in\Theta} \snorm{ \calE_G(\theta) - \calE_T(\theta,S) }
		+ \snorm{ \calE_T(\ths,S) - \calE_T(\thh,S) }
.		
\end{align*}
The first term on the right-hand side is an upper bound for 
$\inf_{\theta\in\Theta} \calE_G(\theta)$, which is the \emph{approximation error}.
For the NNs in the present note, it can be bound with standard Finite-Element based
error bounds.
The second term is called \emph{generalization gap}
and measures to which extent smallness of the discrete loss implies 
smallness of the continuous loss.
The third term is the \emph{approximation error},
i.e. the quality of the algorithm by which 
the approximate minimizer $\ths$ of $\calE_T(\cdot,S)$
has been determined.

In Section \ref{sec:deepFoSLS},
we show that FoSLS NNs exactly emulate finite element spaces
appearing in the discrete exact sequence \eqref{discderham}.
Therefore, the generalization error can be estimated using existing finite element theory.

The functions emulated by the FoSLS NNs introduced in Section \ref{sec:deepFoSLS}
exactly realize piecewise polynomial functions 
on a regular, simplicial partition $\cT$ of the physical domain $\domain$.
Note that these piecewise polynomial functions need not be continuous.
Considering the case that
$F$ is also a piecewise polynomial function on $\cT$, 
on each element $K\in \cT$, 
the residual $\VL u_\theta - F$ restricted to $K$ 
is a polynomial of finite, known degree.
We can define quadrature points that integrate it exactly.
Collecting these quadrature points in a set $S$,
we obtain $\calE_T(\theta,S) = \calE_G(\theta)$ for all $\theta\in\Theta$.
I.e., for \emph{collocation points based on mesh-adapted quadratures, 
there is no generalization gap} provided $F$ is a piecewise polynomial function
on $\cT$.
%

Concerning the approximation error,
if $\theta_{\mathrm{hid}} \in \Theta_{\mathrm{hid}}$ is fixed,
then the squared loss function $\calE_T(\theta,S)^2$ 
depends quadratically on 
$\theta_{\mathrm{out}}(\theta_{\mathrm{hid}}) \in \Theta_{\mathrm{out}} = \R^N$.

The optimal $\ths_{\mathrm{out}}(\theta_{\mathrm{hid}})$ 
can be computed via a linear LSQ problem with
standard numerical linear algebra such as PCCG or thin QR decompositions.
The LSQ solution realizes a $\bbV(\domain)$ best approximation of the solution $U$ 
in the NN feature space corresponding to $\theta_{\mathrm{hid}}$.
\subsection{Layout}
\label{sec:outline}
This text is structured as follows.  
In Section~\ref{sec:LSQForm}, 
we recapitulate FoSLS (re)formulations of a wide range of
source and optimal control problems for 
linear elliptic and parabolic PDEs in a bounded, contractible,
polytopal physical domain $\domain\subset \bbR^d$.
Some axiomatic setting is presented, 
inspired by recent work \cite{FuePraeLSQ}
on adaptive LSQ FEM for elliptic PDEs.
Also, source problems for
linear, parabolic evolution equations in space-time cylinders 
$\domain = (0,T) \times \spatialdomain$ 
are considered, following \cite{FuePraeLSQ,GaStLSQ,GaStLSQApplic},
and for the
time-domain acoustic wave equation, based on \cite{FK23}.
Furthermore,
we cover abstract optimal control problems, following \cite{TFMKOptCtrl2022}.

Section \ref{sec:LSQFEM} recapitulates facts on
the least squares Galerkin method
in abstract form and the quasi-optimality of its solution.

Section~\ref{sec:NeuLSQ} recapitulates notation from our recent work
\cite{LODSZ22_991} on 
NNs which are de Rham compatible on regular, simplicial
partitions of the polytope $\domain$.
Several NNs from \cite{LODSZ22_991} are combined
to emulate FE spaces of all the variables of the first order 
linear system formulations of the (initial) boundary value problems
introduced in Section~\ref{sec:LSQForm}.
The NNs minimizing the corresponding LSQ loss are shown to be quasi-optimal.

In Section \ref{sec:AFEM}, we describe an adaptive finite element algorithm
which converges for the PDEs presented in Section \ref{sec:LSQForm}.
When combined with the NN emulations from Section \ref{sec:NeuLSQ},
this algorithm provides an \emph{adaptive neural network growth strategy},
which is driven by \emph{localized} (in physical domain and in the NN) 
contributions to the global LSQ loss function.
These contributions quantify localized ``out-of-equilibrium'' of the current 
NN state with respect to the PDE LSQ residual.
Section \ref{sec:Concl} concludes the paper.
%
\section{Least Squares Formulation}
\label{sec:LSQForm}
In Section~\ref{sec:AbsSet}, we present an abstract setting of 
\emph{First order System Least Squares (FoSLS) formulations}
of \emph{well-posed PDEs}. 
FoSLS formulations are valid 
under low regularity assumptions on the domain $\domain$ which
we shall assume to be bounded, contractible, 
polytopal with Lipschitz boundary $\partial\domain$.
All FoSLS-formulations involve an  $L^2(\domain)$-norm 
of the residual of the PDE with respect to arguments 
in suitable function spaces,
forming, for second order elliptic systems on contractible $\domain$,
exact sequences.
In Section~\ref{sec:Source}, 
we illustrate this by (textbook) examples of LSQ formulations
of source problems for the Poisson equation and the Helmholtz equation,
for linear elastostatics in dimensions $d=2,3$ 
and time-harmonic electromagnetics (Maxwell's equations)
in dimension $d=3$,
space-time LSQ formulations of the
advection-reaction-diffusion equation and the acoustic wave equation.
Optimal control problems also admit LSQ formulations, 
as we will see in Section \ref{sec:OCP}.
Other examples covered by the present, unified approach
include (but are not limited to)
the Stokes problem, cf. \cite[Section 3.5]{FuePraeLSQ} 
and also \cite[Section 3.2]{TFMKOptCtrl2022}
for the corresponding optimal control problem.
\subsection{Abstract Setting}
\label{sec:AbsSet}
We assume throughout a physical
domain $\domain\subset \bbR^d$ for $d\in\N$, $d\geq 2$
which satisfies
\begin{itemize}
\item[(D)]
Domain assumptions: 
$\domain$ is a bounded, contractible, polytopal Lipschitz\footnote{
Cf. e.g. \cite[Definition 3.2]{ErnGuermondBookI2021}.}
domain.
\end{itemize}
For
$F\in \bbL(\domain) := L^2(\domain_1) \times \cdots \times L^2(\domain_n)$
for some $n\geq 1$ and polytopal domains $\domain_1,\ldots,\domain_n$ satisfying (D),
consider a linear PDE in the generic form
\be\label{eq:PDE}
\VL U = F \quad \mbox{in} \;\; \bbL(\domain) \;.
\ee
Here, 
$\VL \in \calL(\bbV(\domain), \bbL(\domain))$ 
is an operator defined on some Hilbertian Sobolev space 
$\bbV(\domain)$ 
(carrying homogeneous essential boundary conditions for \eqref{eq:PDE})
satisfying the following assumptions.
\begin{itemize}
\item[(A1)] 
{\bf Well-posedness}
$\VL \in \calL(\bbV(\domain), \bbL(\domain))$ 
is boundedly invertible:
there are constants $0 < c_{\VL} \leq C_{\VL} < \infty$ 
such that
$$
\forall v\in \bbV(\domain): \quad 
c_\VL \| v \|_{\bbV(\domain)} 
\leq 
\| \VL v \|_{{\bbL(\domain)}} 
\leq 
C_\VL \| v \|_{\bbV(\domain)} 
\;.
$$
%

The inner product corresponding to $\norm[\bbL(\domain)]{\cdot}$ 
will be denoted by $(\cdot,\cdot)_{\bbL(\domain)}$.
Thus, the least squares bilinear form 
$\bbV\times\bbV\to\R: (v,w) \mapsto ( \VL v, \VL w )_{\bbL(\domain)} $ 
is assumed coercive with coercivity constant $c_\VL^2$.
\item[(A2)] 
{\bf Existence} of solutions: 
the data satisfies $F\in {\rm range}(\VL) \subset \bbL(\domain)$.
\end{itemize}
From (A2), for given $F$, there exists a solution $U\in \bbV(\domain)$ of \eqref{eq:PDE}.
With (A1), we find the \emph{error-residual relation}: 
for every $v\in \bbV(\domain)$, there holds
\be\label{eq:PDEUniq}
c_\VL \| U - v \|_{\bbV(\domain)}
\leq
\| F - \VL v \|_{\bbL(\domain)}
\leq 
C_\VL \| U - v \|_{\bbV(\domain)}
\;.
\ee
This immediately implies uniqueness of the solution $U \in \bbV(\domain)$. 
This solution is, in particular, the unique minimizer of the 
\emph{least squares functional} $\LS( \cdot; F )$ 
associated to \eqref{eq:PDE}:
\begin{equation}\label{eq:FoSLSMin}
U = {\rm arg}\min_{v\in \bbV(\domain)} 
\LS(v;F), 
\quad\mbox{with}\quad
\LS(v;F) := \| F - \VL v \|_{\bbL(\domain)}^2 
\;.
\end{equation}
We spell out some further structural hypotheses for $\bbV(\domain)$, $\VL$ 
and the norm $\| \circ \|_{\bbV(\domain)}$.
For many PDEs of interest, $\VL$ is an integer order differential operator,
i.e. in particular a local operator, and $\bbV(\domain)$ is an integer order
Sobolev space in $\domain$. 
The corresponding (Hilbertian) norm $\| \circ \|_{\bbV(\domain)}$ 
then satisfies
\begin{itemize}
\item[(A3)] {\bf $\| \circ \|_\domain$ Norm additivity}: 
for any disjoint, measurable subsets 
$\omega_1,\omega_2\subset \domain$
it holds that 
$$
\forall v\in \bbV(\domain): \quad 
\| v \|_{\bbV(\omega_1\cup \omega_2)}^2 
= 
\| v \|^2_{\bbV(\omega_1)} + \| v \|^2_{\bbV(\omega_2)}\;
\quad 
\| \VL v \|_{\bbL(\omega_1\cup \omega_2)}^2 
=
\| \VL v \|_{\bbL(\omega_1)}^2 + \| \VL v \|_{\bbL(\omega_2)}^2 
\;.
$$
\item[(A4)]
{\bf Norm continuity}: 
for measurable subdomains $\omega\subset \domain$ 
holds
$$
\forall v\in \bbV(\domain):\quad 
\| v \|_{\bbV(\omega)} \to 0 \;\; \mbox{as} \;\; | \omega | \to 0 \;.
$$
Here, $|\omega|$ denotes the Lebesgue measure of the domain $\omega \subset \domain$.
\end{itemize}

In the LSQ formulation \eqref{eq:FoSLSMin},
for given data $F$, 
the quadratic (for a linear $\VL$) functional $v\mapsto \LS(v;F)$
defined in \eqref{eq:FoSLSMin} takes the role of loss-functional in NN training.
Due to the additivity (A3) and the locality of $\VL$, the numerical
evaluation of $\LS(v;F)$ amounts to a sum of local integrals
over a regular, simplicial triangulation $\cT$ of $\domain$.

\subsection{Source Problems}
\label{sec:Source}
We now list several concrete LSQ formulations for linear PDEs.
We address in particular so-called source problems, where the
interest is in the LSQ characterization of the weak solution
for a given instance of input data. We consider both, 
stationary, elliptic PDEs (Sections~\ref{sec:Poisson} -- \ref{sec:nLSQCEM})
parabolic PDEs (Section~\ref{sec:nLSQParab}) and
hyperbolic PDEs (Section~\ref{sec:nLSQWave}).
\subsubsection{Poisson Equation and Helmholtz Equation}
\label{sec:Poisson}
In \eqref{eq:PDE},
second order, elliptic differential operators $\VL$ would require $\bbV(\domain)$
to be a subspace of the Sobolev space $H^2(\domain)$. 
We illustrate this for the Dirichlet problem
of the Poisson equation (corresponding to $k=0$ in what follows) 
and the Helmholtz equation at wavenumber $k>0$.

With the choice $\bbV(\domain) = (H^2\cap H^1_0)(\domain)$,
it is, however, well-known that for $\domain\subset \bbR^2$ being 
a nonconvex polygon,
$-\Delta : \bbV(\domain) \to L^2(\domain)$ is not an isomorphism, due to
non-$H^2$ singularities appearing in the solution 
$u\in H^1_0(\domain)$ 
of the Poisson equation $-\Delta u = f$
for $f\in L^2(\domain)$ at reentrant corners (e.g. \cite{Grisvard}).
See the recent \cite{ainsworth2024extended} for a 
\emph{residual formulation of a second-order elliptic PDE
in weighted $L^2(\domain)$-norms in polygons addressing this issue;
the FoSLS approach avoids the use of weighted norms, at the expense of
introducing additional fields to be approximated.}

Condition (A2) is restored by choosing $n>1$ 
and 
by writing $\VL = -\Delta-k^2$ as a first-order system: 
with $\bsigma = \nabla u$, one arrives at
$$
f+{\rm div}\bsigma +k^2 u = 0, \;\; \nabla u - \bsigma = \bszero \;\;\mbox{in}\;\domain, 
\;\; u= 0 \;\mbox{on}\;\partial\domain \;.
$$
This can be done in several ways. 
For example,
with the unknown $U = (u, \bsigma)$, $n=d+1$, 
and the choice of spaces
$\bbV(\domain) := H^1_0(\domain) \times H({\rm div};\domain)$
from the de Rham complex \eqref{derham} (with homogeneous boundary conditions in the first component).
As norm on this space we choose 
$\| (v,\bstau) \|_{\bbV(\domain)}^2
:=
\| v \|_{L^2(\domain)}^2 
+
\| \nabla v \|_{L^2(\domain)^d}^2 
+
\| \bstau \|_{L^2(\domain)^d}^2 
+
\| \divv \bstau \|_{L^2(\domain)}^2$.
One attains the form \eqref{eq:PDE} 
with the \emph{first order differential operator} 
\begin{align}
\label{eq:PoissonFoSLS}
\VL U 
	:= \VL \left(\begin{array}{c} u \\ \bsigma \end{array}\right)
	:= \left(\begin{array}{c} -{\rm div} \bsigma -k^2 u \\ \nabla u - \bsigma \end{array}\right)
   = \left(\begin{array}{c} f \\ \bsnul \end{array} \right) 
   =: F \in L^2(\domain)^{d+1} =: \bbL(\domain) 
\;.
\end{align}
Assumptions (A3)--(A4) hold for
$\bbV(\domain) = H^1_0(\domain) \times H({\rm div};\domain)$,
and (A1)--(A2) follow from the following proposition.
\begin{proposition}[{{\cite[Section 3.1]{FuePraeLSQ}}}]
\label{prop:Poisson}
Let $\domain \subset \R^d$ be a bounded, simply connected, polytopal domain with 
Lipschitz boundary $\partial\domain$. 
If either $k=0$ or if $k^2>0$ is not an eigenvalue of the Dirichlet-Laplacian in $\domain$, 
then $\VL$ is an isomorphism.
\end{proposition}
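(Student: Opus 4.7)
The plan is to establish that $\VL$ is bounded and bounded below (the two estimates of (A1)), split into the benign case $k=0$ and the Helmholtz case $k^2>0$. Boundedness is immediate: by the triangle inequality and Cauchy–Schwarz, $\|\VL(u,\bsigma)\|_{\bbL(\domain)}^2 = \|\div\bsigma + k^2 u\|_{L^2}^2 + \|\nabla u - \bsigma\|_{L^2}^2$ is controlled by $\|(u,\bsigma)\|_{\bbV(\domain)}^2$ with constant depending on $k$.

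For $k=0$, I would prove coercivity directly by an integration-by-parts trick exploiting the homogeneous Dirichlet condition on $u$. Expanding the cross term and using $u\in H^1_0(\domain)$ to integrate by parts,
\[
\|\nabla u-\bsigma\|_{L^2}^2
=\|\nabla u\|_{L^2}^2+\|\bsigma\|_{L^2}^2-2(\nabla u,\bsigma)_{L^2}
=\|\nabla u\|_{L^2}^2+\|\bsigma\|_{L^2}^2+2(u,\div\bsigma)_{L^2}.
\]
Estimating $2|(u,\div\bsigma)|\le 2C_P\|\nabla u\|_{L^2}\|\div\bsigma\|_{L^2}$ via Poincar\'e and applying Young's inequality absorbs half of $\|\nabla u\|_{L^2}^2$ into the left-hand side and leaves $\|\div\bsigma\|_{L^2}^2$ on the right. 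Combining with $\|\div\bsigma\|_{L^2}^2$ appearing in $\|\VL(u,\bsigma)\|_{\bbL}^2$ and a second use of Poincar\'e to bound $\|u\|_{L^2}$, one obtains $\|(u,\bsigma)\|_{\bbV(\domain)}\lesssim\|\VL(u,\bsigma)\|_{\bbL(\domain)}$.

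For $k^2>0$, I would invoke a Fredholm/compact perturbation argument. Write $\VL_k=\VL_0+K$ with $K(u,\bsigma)=(-k^2 u,\bszero)$. Since $\bbV(\domain)\hookrightarrow L^2(\domain)\times L^2(\domain)^d$ is compact in the first component by Rellich–Kondrachov, $K\colon\bbV(\domain)\to\bbL(\domain)$ is compact. Because $\VL_0$ is an isomorphism (by the $k=0$ step), $\VL_0^{-1}\VL_k=I+\VL_0^{-1}K$ is a compact perturbation of the identity; hence $\VL_k$ is Fredholm of index $0$. It then suffices to verify injectivity: if $\VL_k(u,\bsigma)=0$ then $\bsigma=\nabla u$ and $-\Delta u-k^2u=0$ in $\domain$ with $u|_{\partial\domain}=0$, so the assumption that $k^2$ is not a Dirichlet eigenvalue gives $u=0$ and hence $\bsigma=\bszero$. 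Bijectivity together with the open mapping theorem yields the bounded lower estimate.

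The main obstacle is the coercivity step for $k=0$, because the natural cross-term bound does not separate $\|\nabla u\|_{L^2}$ from $\|\bsigma\|_{L^2}$; the integration by parts crucially uses the homogeneous Dirichlet boundary condition, which is why $\bbV(\domain)$ is chosen with $u\in H^1_0(\domain)$ (and not merely $H^1(\domain)$). The Helmholtz case is then routine once the Fredholm framework is in place, modulo the spectral hypothesis on $k^2$.
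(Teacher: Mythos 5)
The paper gives no proof of Proposition \ref{prop:Poisson} beyond citing \cite[Section 3.1]{FuePraeLSQ}, and your argument follows the standard route used in that literature: direct coercivity via integration by parts plus Poincar\'e for $k=0$, and a compact-perturbation/Fredholm argument with the spectral hypothesis for $k^2>0$. Those parts are essentially correct (the injectivity step for $k^2>0$ and the compactness of $K$ via Rellich are fine).

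There is, however, one genuine gap: you never establish surjectivity of $\VL$, whereas the proposition asserts that $\VL$ is an \emph{isomorphism}, and the paper relies on exactly this to obtain (A2), i.e. $F\in\mathrm{range}(\VL)$. Your $k=0$ step yields only the two-sided bound $c\,\|(u,\bsigma)\|_{\bbV(\domain)}\le\|\VL_0(u,\bsigma)\|_{\bbL(\domain)}\le C\,\|(u,\bsigma)\|_{\bbV(\domain)}$, i.e. injectivity and closed range; boundedness below does not give density of the range. The omission then propagates: in the Helmholtz case you write $\VL_0^{-1}\VL_k=I+\VL_0^{-1}K$, which presupposes that $\VL_0$ is onto --- precisely the unproved statement (without surjectivity you only know $\VL_0$ is semi-Fredholm, not of index $0$). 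The fix is short: given $(f,\bg)\in L^2(\domain)\times L^2(\domain)^d$, let $u\in H^1_0(\domain)$ be the Lax--Milgram solution of $\int_\domain\nabla u\cdot\nabla v=\int_\domain f v+\int_\domain\bg\cdot\nabla v$ for all $v\in H^1_0(\domain)$, and set $\bsigma:=\nabla u-\bg$; then $\divv\bsigma=-f\in L^2(\domain)$, so $\bsigma\in H(\divv;\domain)$ and $\VL_0(u,\bsigma)=(f,\bg)$. With this added, $\VL_0$ is bijective with bounded inverse, your index-zero argument for $\VL_k$ is legitimate, and bijectivity plus the open mapping theorem gives the lower bound for $k^2>0$. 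A minor cosmetic point: in the $k=0$ coercivity the term $2C_P^2\|\divv\bsigma\|_{L^2(\domain)}^2$ produced by Young's inequality need not be dominated by the single copy of $\|\divv\bsigma\|_{L^2(\domain)}^2$ in the residual when $C_P$ is large; one should combine $\|\divv\bsigma\|_{L^2(\domain)}^2+\epsilon\|\nabla u-\bsigma\|_{L^2(\domain)^d}^2$ with $\epsilon$ sufficiently small (or weight the components), which is what your ``absorb'' step implicitly requires.
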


\subsubsection{Linear Elastostatics}
\label{sec:Elasticity}
Linear elasticity can be treated similarly to the Poisson equation in the previous section,
as we recall from \cite{CKS2005} and \cite[Section 3.3]{FuePraeLSQ}.
We consider $\domain\subset\R^d$ for $d\in\{2,3\}$.
Given $\bsf\in L^2(\domain)^d$ and positive Lam\'e constants $\lambda,\mu>0$, 
the linear elasticity system reads as follows:
Find a vector-valued displacement field 
$\bsu : \domain \to \R^d$, $\bsu\in H^1_0(\domain)^d$
and a matrix-valued stress field 
$\bsigma : \domain \to \R^{d\times d}$, $\bsigma\in H(\divv;\domain)^d$
such that 
\begin{align}
\label{eq:Elasticity}
- \divv \bsigma = \bsf,
\quad
\bsigma = \calC \bseps
\quad\text{ in }\quad
\domain
.
\end{align}
The linear strain tensor $\bseps : \domain \to \R^{d\times d}$ is defined as
$\bseps(\bsu) = \tfrac12 ( \nabla\bsu + (\nabla\bsu)^\perp )$
and the elasticity tensor $\calC$ is such that
$\calC \bseps = 2 \mu \bseps + \lambda (\trace \bseps) I_{d\times d}$,
cf. e.g. \cite[Section 3.3]{FuePraeLSQ}.
Here, 
$\nabla\bsu$ denotes the Jacobian,
$\divv \bsigma$ is the row-wise divergence,
$\trace \bseps$ is the trace
and $I_{d\times d}$ denotes the $d\times d$ identity matrix.
Equation \eqref{eq:Elasticity} 
can be written as a first order system in terms of the Hilbert space
\begin{align*}
\bbV(\domain) = H^1_0(\domain)^d \times H(\divv;\domain)^d
,
\end{align*}
equipped with the norm
\begin{align}
\label{eq:bbVElasticitynorm}
\| (\bsv,\bstau) \|_{\bbV(\domain)}^2
:=
\| \calC^{1/2} \bseps(\bsv) \|_{L^2(\domain)^{d\times d}}^2 
+
\| \calC^{-1/2} \bstau \|_{L^2(\domain)^{d\times d}}^2 
+
\| \divv \bstau \|_{L^2(\domain)^d}^2
\;.
\end{align}
As these norms are local, (A3) and (A4) hold.
Finally, we obtain an equation of the form \eqref{eq:PDE}:
\begin{equation}\label{eq:FoSLSElasticity}
\VL U
:=
\VL \left(\begin{array}{c} \bsu \\ \bsigma \end{array}\right) 
:=
\left(\begin{array}{c} - \divv \bsigma \\ 
	\calC^{-1/2} \bsigma - \calC^{1/2} \bseps(\bsu) \end{array}\right) 
=
\left(\begin{array}{c} \bsf \\ \bsnul \end{array}\right) 
=: F \in L^2(\domain)^{d+d^2}
=: \bbL(\domain)
\;.
\end{equation}
Assumptions (A1) and (A2) also hold, as stated in the following proposition.
\begin{proposition}[{{\cite[Theorem 2.1]{CKS2005}, \cite[Section 3.3]{FuePraeLSQ}}}]
\label{prop:Elasticity}
For a polyhedral domain $\domain\subset \R^d$, $d\in\{2,3\}$ satisfying (D),
$\VL$ is an isomorphism.

For all $\bsf \in L^2(\domain)^d$,
\eqref{eq:Elasticity} admits the FoSLS formulation
\eqref{eq:FoSLSElasticity}
satisfying (A1)--(A2).
Equation \eqref{eq:FoSLSElasticity} 
admits a unique solution $U = (\bsu,\bsigma)\in \bbV(\domain)$,
which is the unique minimizer over 
$\bbV(\domain)$
of the LSQ functional \eqref{eq:FoSLSMin} 
with $\VL$ and $F$ as in \eqref{eq:FoSLSElasticity}.
\end{proposition}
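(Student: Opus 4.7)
The proposition has three ingredients: the isomorphism property of $\VL$, existence of a FoSLS solution for given $\bsf$, and the LSQ minimization characterization. The last item is a direct consequence of the abstract setting once (A1)--(A2) are established, so the plan focuses on verifying the hypotheses of Section \ref{sec:AbsSet}. I would proceed in four steps.

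\textbf{Step 1 (Continuity, upper bound in (A1)).} I would bound $\| \VL(\bsv,\bstau) \|_{\bbL(\domain)}^2 = \| \divv \bstau \|_{L^2(\domain)^d}^2 + \| \calC^{-1/2}\bstau - \calC^{1/2}\bseps(\bsv) \|_{L^2(\domain)^{d\times d}}^2$ from above by expanding the second summand with the triangle inequality, which yields $\lesssim \| \calC^{1/2}\bseps(\bsv) \|^2 + \| \calC^{-1/2}\bstau \|^2 + \| \divv \bstau \|^2$. This is precisely $\| (\bsv,\bstau) \|_{\bbV(\domain)}^2$ up to a factor $2$, giving $C_\VL$.

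\textbf{Step 2 (Coercivity, lower bound in (A1)).} This is the main obstacle. Starting from the identity $\| \calC^{-1/2}\bstau - \calC^{1/2}\bseps(\bsv) \|^2 = \| \calC^{-1/2}\bstau \|^2 - 2(\bstau,\bseps(\bsv))_{L^2} + \| \calC^{1/2}\bseps(\bsv) \|^2$, the cross term is the obstruction: it is \emph{not} controlled pointwise. I would handle it by integration by parts, $(\bstau,\bseps(\bsv))_{L^2} = -(\divv\bstau, \bsv)_{L^2}$ (using $\bsv \in H^1_0(\domain)^d$ and the symmetrization absorbed into $\bseps$), and then absorb it via a Young inequality against $\| \divv \bstau \|_{L^2}^2$ and the Friedrichs/Poincar\'e inequality $\| \bsv \|_{L^2} \lesssim \| \nabla \bsv \|_{L^2}$. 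At this stage Korn's inequality on $H^1_0(\domain)^d$, namely $\| \nabla \bsv \|_{L^2} \lesssim \| \bseps(\bsv) \|_{L^2}$, together with the spectral bounds on $\calC$ coming from $\lambda,\mu > 0$, recovers $\| \calC^{1/2}\bseps(\bsv) \|^2 \gtrsim \| \nabla \bsv \|^2$. Collecting terms, one obtains $\| \VL(\bsv,\bstau) \|_{\bbL(\domain)}^2 \gtrsim \| (\bsv,\bstau) \|_{\bbV(\domain)}^2$, furnishing $c_\VL$. (Alternatively one may cite the result of \cite{CKS2005} directly, as the proposition statement does.)

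\textbf{Step 3 (Existence, (A2)).} Given $\bsf \in L^2(\domain)^d$, the primal weak elasticity problem
\[
\text{find } \bsu \in H^1_0(\domain)^d : \; (\calC \bseps(\bsu), \bseps(\bsv))_{L^2} = (\bsf, \bsv)_{L^2} \quad \forall \bsv \in H^1_0(\domain)^d
\]
is uniquely solvable by Lax-Milgram, using Korn's inequality and positivity of $\calC$ for coercivity. Setting $\bsigma := \calC \bseps(\bsu)$, one has $\bsigma \in L^2(\domain)^{d \times d}$, and by construction $-\divv \bsigma = \bsf \in L^2(\domain)^d$ (in the distributional sense), hence $\bsigma \in H(\divv;\domain)^d$. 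The pair $U = (\bsu,\bsigma)$ lies in $\bbV(\domain)$ and satisfies $\VL U = (\bsf,\bszero)^\top = F$, showing $F \in \mathrm{range}(\VL)$.

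\textbf{Step 4 (Conclusion).} Assumptions (A3) and (A4) are immediate because the norm \eqref{eq:bbVElasticitynorm} is defined by $L^2$-integrals of local quantities, so it is additive over disjoint measurable subsets and vanishes as $|\omega| \to 0$ by absolute continuity of the Lebesgue integral. With (A1)--(A4) in hand, the abstract error-residual relation \eqref{eq:PDEUniq} applied to $F = (\bsf, \bszero)^\top$ delivers uniqueness of $U$ as well as its characterization as the unique minimizer of $\LS(\cdot; F)$ over $\bbV(\domain)$, cf. \eqref{eq:FoSLSMin}. The principal technical effort is concentrated in Step 2; everything else is either routine or a transcription of the abstract framework.
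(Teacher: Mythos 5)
The paper does not actually prove Proposition \ref{prop:Elasticity}; it is imported by citation from \cite{CKS2005} and \cite{FuePraeLSQ}, so your fallback of citing \cite{CKS2005} is exactly the paper's route, and your Steps 1, 3 and 4 are sound. The problem is Step 2, which is where you correctly locate the essential difficulty but where your argument breaks: the identity $(\bstau,\bseps(\bsv))_{L^2} = -(\divv\bstau,\bsv)_{L^2}$ is false for general $\bstau\in H(\divv;\domain)^d$, because this space carries no symmetry constraint on the matrix field. Row-wise integration by parts gives $(\bstau,\nabla\bsv)_{L^2}=-(\divv\bstau,\bsv)_{L^2}$, hence $(\bstau,\bseps(\bsv))_{L^2}=-(\divv\bstau,\bsv)_{L^2}-(\mathrm{skw}\,\bstau,\mathrm{skw}\,\nabla\bsv)_{L^2}$. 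The extra skew term is of size $\|\mathrm{skw}\,\bstau\|\,\|\nabla\bsv\|$, i.e.\ comparable to the product of the two leading quantities you want to bound from below, so it cannot be absorbed by Young's inequality against $\|\divv\bstau\|^2$ and lower-order terms; this skew contribution is precisely the known obstruction that makes ellipticity of the stress--displacement FoSLS functional a genuine theorem rather than a two-line computation.

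The proof can be repaired within your framework. Since $\bseps(\bsv)$ is symmetric and $\calC$ preserves the symmetric/skew splitting (on skew matrices it acts as multiplication by $2\mu$, since their trace vanishes), the residual decomposes orthogonally as $\|\calC^{-1/2}\bstau-\calC^{1/2}\bseps(\bsv)\|^2=\|\calC^{-1/2}\mathrm{sym}\,\bstau-\calC^{1/2}\bseps(\bsv)\|^2+(2\mu)^{-1}\|\mathrm{skw}\,\bstau\|^2$, so the functional controls $\|\mathrm{skw}\,\bstau\|$ directly; the corrected cross term $(\mathrm{sym}\,\bstau,\bseps(\bsv))_{L^2}=-(\divv\bstau,\bsv)_{L^2}-(\mathrm{skw}\,\bstau,\mathrm{skw}\,\nabla\bsv)_{L^2}$ then involves only $\|\divv\bstau\|$ and $\|\mathrm{skw}\,\bstau\|$ multiplied by Korn-controlled quantities and can be absorbed, yielding coercivity with constants depending on $\mu,\lambda$ (the bound of \cite{CKS2005} is moreover robust in $\lambda$, which this elementary repair does not give). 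A secondary point: Step 3 only shows that data of the form $(\bsf,\bszero)$ lies in $\mathrm{range}(\VL)$, which suffices for (A2) but not for the stated isomorphism property; for general $(\bsf,\bsg)\in\bbL(\domain)$ one sets $\bsigma=\calC\bseps(\bsu)+\calC^{1/2}\bsg$ and solves the primal problem with the $H^{-1}$ load $\bsf+\divv(\calC^{1/2}\bsg)$, a routine extension you should state if you intend to prove the full isomorphism rather than cite it.
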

\subsubsection{Time-Harmonic Electromagnetic Waves (Maxwell Equations)} 
\label{sec:nLSQCEM}
We present a FoSLS formulation of the Maxwell equations,
following \cite[Section 3.4]{FuePraeLSQ}. 
We assume
$\domain\subset \bbR^3$ (we do not consider $d=2$ for this problem class).
For given $\bsf\in L^2(\domain)^3$ and $c\in L^\infty(\domain)$
such that either $\essinf_{x\in\domain} c(x) > 0$
or $c = - \omega^2$ is not an eigenvalue of the cavity problem (cf. \cite[Section 2.4]{CS2018}),
the Maxwell equations in $\domain$ read:
Find 
$\bsu\in H_0(\curl;\domain)$ 
and  
$\bsigma \in H(\curl;\domain)$
such that
\begin{equation}\label{eq:Maxwell}
\curl \bsigma + c\bsu = \bsf,  
\quad 
\curl \bsu - \bsigma = \bsnul \quad\mbox{in}\quad\domain,
\end{equation}
where 
$$
H(\curl;\domain) := \{ \bsv \in L^2(\domain)^3: \curl \bsv \in L^2(\domain)^3 \}
,\;\;
H_0(\curl;\domain) := \{ \bsv \in H(\curl;\domain) : \bsv\times \bsn |_{\partial\domain} = 0 \}.
$$
We choose the Hilbert 
space $\bbV(\domain)$ as
$$
\bbV(\domain) := H_0(\curl;\domain) \times H(\curl;\domain)
$$
equipped with the graph norm on $\bbV(\domain)$ 
given by \cite[Section 3.4]{FuePraeLSQ}
$$
\| (\bsv,\bstau) \|_{\bbV(\domain)}^2
:=
\| \bsv \|_\domain^2 
+
\| \curl \bsv \|_\domain^2 
+
\| \bstau \|_{\domain}^2 
+
\| \curl \bstau \|_{\domain}^2
\;,
$$
with $\| \circ \|_\domain$ denoting the $L^2(\domain)^3$-norm.
These norms being local, (A3) and (A4) obviously hold.

Thus, 
the Maxwell system \eqref{eq:Maxwell} 
fits into the generic format \eqref{eq:PDE}
with 
\begin{equation}\label{eq:FoSLSMaxwell}
\VL U
:=
\VL \left(\begin{array}{c} \bsu \\ \bsigma \end{array}\right) 
:=
\left(\begin{array}{c} \curl \bsigma + c \bsu \\ \curl \bsu - \bsigma \end{array}\right) 
=
\left(\begin{array}{c} \bsf \\ \bsnul \end{array}\right) 
=: F \in L^2(\domain)^6
=: \bbL(\domain)
\;.
\end{equation}
Assumptions (A1) and (A2) hold by the following proposition.
\begin{proposition}[{{\cite[Section 3.4]{FuePraeLSQ}, \cite[Section 3.3]{TFMKOptCtrl2022}}}]
\label{prop:Maxwell}
For a polyhedral domain $\domain\subset \R^3$ satisfying (D),
for all $c\in L^\infty(\domain)$
such that either $\essinf_{x\in\domain} c(x) > 0$
or $c = - \omega^2$ is not an eigenvalue of the cavity problem,
$\VL$ is an isomorphism.
For all $\bsf \in L^2(\domain)^3$,
\eqref{eq:Maxwell} admits the FoSLS formulation
\eqref{eq:FoSLSMaxwell}
satisfying (A1) and (A2).
The FoSLS formulation \eqref{eq:FoSLSMaxwell} 
admits a unique solution $U = (\bsu,\bsigma)\in \bbV(\domain)$ 
being the unique minimizer over 
$\bbV(\domain)$
of the LSQ functional \eqref{eq:FoSLSMin} 
with $\VL$ and $F$ as in \eqref{eq:FoSLSMaxwell}.
\end{proposition}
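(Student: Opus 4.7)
The plan is to verify the abstract assumptions (A1) and (A2) for the operator $\VL$ defined in \eqref{eq:FoSLSMaxwell}; once these hold, the unique-minimizer claim follows directly from the error-residual relation \eqref{eq:PDEUniq} and the characterization \eqref{eq:FoSLSMin}.

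First I would check the upper bound in (A1). Writing $\VL(\bsv,\bstau)=(\curl\bstau+c\bsv,\curl\bsv-\bstau)$, the triangle inequality together with $\|c\bsv\|_\domain\leq\|c\|_{L^\infty(\domain)}\|\bsv\|_\domain$ directly yields $\|\VL(\bsv,\bstau)\|_{\bbL(\domain)}\leq C_\VL\|(\bsv,\bstau)\|_{\bbV(\domain)}$ with $C_\VL$ depending on $\|c\|_{L^\infty(\domain)}$, using the definition of the graph norm on $\bbV(\domain)$.

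The core of the proof is to establish that $\VL:\bbV(\domain)\to\bbL(\domain)$ is bijective; the lower bound in (A1) then follows from the bounded inverse (open mapping) theorem applied between Hilbert spaces, and (A2) is automatic since $\bbL(\domain)=\mathrm{range}(\VL)$. For surjectivity, given arbitrary $F=(\bsf_1,\bsf_2)\in\bbL(\domain)$, I would introduce the reduced second-order problem obtained by formally eliminating $\bsigma$: find $\bsu\in H_0(\curl;\domain)$ such that
\begin{equation*}
(\curl\bsu,\curl\bsv)_\domain + (c\bsu,\bsv)_\domain = (\bsf_1,\bsv)_\domain + (\bsf_2,\curl\bsv)_\domain \quad \forall\,\bsv\in H_0(\curl;\domain).
\end{equation*}
This variational problem is well-posed under the hypothesis on $c$: by Lax–Milgram when $\essinf_{x\in\domain}c(x)>0$, and otherwise by the Fredholm alternative together with the compact embedding of $H_0(\curl;\domain)\cap H(\divv;\domain)$ into $L^2(\domain)^3$ in conjunction with the Helmholtz decomposition of $L^2(\domain)^3$, as in \cite[Section 2.4]{CS2018}; the exclusion of cavity eigenvalues ensures uniqueness, hence existence. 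Setting $\bsigma:=\curl\bsu-\bsf_2\in L^2(\domain)^3$ directly gives the second equation of \eqref{eq:FoSLSMaxwell}, and the variational identity above then reads $(\bsigma,\curl\bsv)_\domain=(\bsf_1-c\bsu,\bsv)_\domain$, so $\curl\bsigma=\bsf_1-c\bsu\in L^2(\domain)^3$ in the distributional sense, placing $\bsigma\in H(\curl;\domain)$ and verifying the first equation. Injectivity is obtained by applying the same argument with $F=0$: the second equation forces $\bsigma=\curl\bsu$, the first becomes the homogeneous cavity problem $\curl\curl\bsu+c\bsu=\bszero$, and the spectral hypothesis on $c$ yields $\bsu=\bszero$, hence $\bsigma=\bszero$.

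With bijectivity in hand, the bounded inverse theorem supplies $c_\VL>0$, completing (A1); (A2) holds trivially for the specific data $F=(\bsf,\bszero)$; and \eqref{eq:PDEUniq}–\eqref{eq:FoSLSMin} give the unique minimizer characterization. The main obstacle is the indefinite case $c=-\omega^2<0$, where neither the Maxwell bilinear form nor the FoSLS bilinear form is coercive a priori, and one must invoke the compactness arguments underpinning the cavity theory to extract the inf–sup constant; everything else is a routine bookkeeping exercise in the graph-norm calculus on $H(\curl;\domain)$.
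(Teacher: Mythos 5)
Your argument is correct. Note that the paper itself offers no proof of Proposition \ref{prop:Maxwell}: it is quoted from \cite[Section 3.4]{FuePraeLSQ} and \cite[Section 3.3]{TFMKOptCtrl2022}, so there is no in-paper argument to compare against line by line. Your reconstruction — boundedness of $\VL$ from the graph norm, surjectivity by eliminating $\bsigma=\curl\bsu-\bsf_2$ and solving the second-order variational problem in $H_0(\curl;\domain)$ (Lax--Milgram when $\essinf c>0$, Helmholtz decomposition plus Weber-type compactness of $H_0(\curl;\domain)\cap H(\divv;\domain)\hookrightarrow L^2(\domain)^3$ and the Fredholm alternative when $c=-\omega^2$ is not a cavity eigenvalue), injectivity from the homogeneous cavity problem, and the lower bound in (A1) via the open mapping theorem — is exactly the standard route underlying the cited results, and the passage from (A1)--(A2) to the unique-minimizer statement via \eqref{eq:PDEUniq} and \eqref{eq:FoSLSMin} is as in the paper's abstract setting of Section \ref{sec:AbsSet}.
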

\subsubsection{Space-Time LSQ for Advection-Reaction-Diffusion}
\label{sec:nLSQParab}

In a bounded polytopal domain 
$\spatialdomain\subset\bbR^d$ 
satisfying condition (D),
with boundary 
$\Gamma = \partial \spatialdomain$
where homogeneous Dirichlet boundary conditions are imposed,
and 
a finite time interval $I = (0,T)$, 
we consider the \emph{parabolic initial boundary value problem}:
given data 
$f, \bsa, \bsb, c, u_0$, 
find
$u:I\times \spatialdomain \to \bbR$ 
such that
\begin{equation} \label{eq:Parabolic}
\begin{array}{rcl} 
\partial_t u - \divv_x(\bsa \nabla_x u) + \bsb\cdot\nabla_x u + cu & = & f 
\quad \mbox{in}\quad I\times \spatialdomain,
\\
u & = & 0 
\quad \mbox{on} \quad I\times \Gamma,
\\
u(0,\cdot) & = & u_0 
\quad \mbox{in} \quad \spatialdomain.
\end{array}
\end{equation}
%
Here, 
we have $\domain := I \times \spatialdomain$,
$\divv_x$ and $\nabla_x$ denote differential operators on $\Omega$
and the diffusion coefficient
$\bsa = \bsa^\top \in L^\infty(\domain)^{d\times d}$
is a given, symmetric matrix function which is assumed
to be uniformly positive definite, 
$0\leq c\in L^\infty(\domain)$ is a reaction coefficient,
and 
$\bsb\in L^\infty(\domain)^d$ an advection field 
such that for all $u,v\in H^1_0(\spatialdomain)$
\begin{subequations}
\label{eq:ParabSpatialwellp}
\begin{align}
\label{eq:ParabContin}
\snormc{ \int_\spatialdomain \bsa(t,\cdot) \nabla_x u \cdot \nabla_x v + v \bsb(t,\cdot) \cdot \nabla_x u + c (t,\cdot) u v } 
	\leq &\, C \norm[H^1(\spatialdomain)]{ u } \norm[H^1(\spatialdomain)]{ v },
	\\
\int_\spatialdomain \bsa (t,\cdot) \nabla_x v \cdot \nabla_x v 
                 + v \bsb(t,\cdot) \cdot \nabla_x v + c(t,\cdot) v^2  
	\geq &\, C' \norm[H^1(\spatialdomain)]{ v }^2 - C'' \norm[L^2(\spatialdomain)]{ v }^2,
\label{eq:ParabGarding}
\end{align}
\end{subequations}
for all $t\in I$, with some
constants $C,C'>0$, $C''\geq0$ independent of $t$.
In addition, we consider a right-hand side $f\in L^2(\domain)$
and an initial value $u_0\in L^2(\spatialdomain)$.

We introduce the Hilbert-space for the FoSLS space-time formulation:
with 
$$
U = (u_1,\bsu_2), \quad\mbox{where} 
\quad
u_1 = u, \;  
\bsu_2 = -\bsa \nabla_x u 
\;,
$$
we obtain from \eqref{eq:Parabolic} the first order system
\begin{equation}\label{eq:Para1stOrd}
\VL U
:=
\VL
\left(\begin{array}{c} 
u_1
\\
\bsu_2
\end{array}
\right)
:= 
\left(
\begin{array}{c} 
\bsu_2 +\bsa\nabla_x u_1
\\
\partial_t u_1 +\divv_x \bsu_2 + \bsb\cdot \nabla_x u_1 + cu_1
\\
u_1(0,\cdot) 
\end{array}
\right)
=
F
:= 
\left(\begin{array}{c} 
\bsnul
\\
f
\\
u_0
\end{array}
\right)
\end{equation}
where 
$F\in L^2(\domain)^d \times L^2(\domain) \times L^2(\spatialdomain)
	=: \bbL(\domain)$.
 
The (space-time) LSQ formulation of \eqref{eq:Para1stOrd} is based on
\begin{equation}\label{eq:heatV}
\bbV(\domain) := 
\{ \bsu = (u_1,\bsu_2) \in L^2(I;H^1_0(\spatialdomain))\times L^2(I\times \spatialdomain)^d: \divv \bsu \in L^2(I\times \spatialdomain) \}\;,
\end{equation}
where $\divv\bsu = \partial_t u_1 + \divv_x \bsu_2$ 
is the divergence on the space-time cylinder $\domain$.
We impose on $\bbV(\domain)$ 
the norm\footnote{\label{fn:tracetimezero}
In \cite[Equation (2.1)]{GaStLSQ}, the last term is omitted 
from the definition of $\norm[\bbV(\domain)]{\cdot}$.
That norm is equivalent to the one defined here,
which is the norm that is used in the proof of \cite[Theorem 3.3]{GaStLSQ}
and introduced in Step 1 of the proof of that theorem.
}
$\| \circ \|_{\bbV(\domain)}$ given by
\begin{align}
\label{eq:bbVheatnorm}
\| \bsu \|_{\bbV(\domain)}^2 
:= 
\| u_1 \|^2_{L^2(I;H^1(\spatialdomain))} 
	+ \| \bsu_2 \|^2_{L^2(I;L^2(\spatialdomain)^d)} 
	+ \| \divv \bsu \|^2_{L^2(\domain)}
	+ \norm[L^2(\spatialdomain)]{ u_1(0,\cdot) }^2 
\;.
\end{align}
This norm satisfies Assumptions (A3) and (A4), 
as only a local (differential) operator enters the LSQ residual.
%
%
%

Conditions (A1) and (A2) follow from the fact that $\VL$ is an isomorphism,
which is in this case the assertion of \cite[Theorem 2.3 and Remark 2.4]{GaStLSQ}.
We use the formulation of the advection term from \cite[Remark 2.4]{GaStLSQ}.
This formulation equals that of \cite[Theorem 2.2]{GaStLSQApplic} 
up to a factor $-1$ in the first component.
%
%
See also \cite{FuePraeLSQ} for the analysis of a least squares formulation 
for a particular case of \eqref{eq:Parabolic}.

With conditions (A1)--(A2) 
in hand 
we obtain the following result.
\begin{proposition}[{{\cite[Theorem 2.3 and Remark 2.4]{GaStLSQ}}}]
\label{prop:heat}
For all $\bsa$, $\bsb$ and $c$ 
such that \eqref{eq:ParabSpatialwellp} is satisfied
and
for all $f\in L^2(\domain)$, $u_0 \in L^2(\spatialdomain)$,
there exists a unique solution 
$U = (u_1,\bsu_2) \in \bbV(\domain)$
of \eqref{eq:Para1stOrd}.
It is, for given data $F$ as in \eqref{eq:Para1stOrd}, 
the unique minimizer of the LSQ functional  
$\LS(\cdot;F): \bbV(\domain) \to \bbR$
given for $\bsv = (v_1,\bsv_2)\in \bbV(\domain)$
by
\begin{align}
\LS(\bsv;F)
:= &\,
\| \bsv_2 +\bsa\nabla_x v_1\|^2_{L^2(\domain)^d}
+
\| \partial_t v_1 +\divv_x \bsv_2 +\bsb\cdot \nabla_x v_1 + cv_1 - f \|^2_{L^2(\domain)} 
\nonumber
\\
&\,
+ 
\|  v_1(0,\cdot)-u_0 \|^2_{L^2(\spatialdomain)} 
\;.
\label{eq:LSQLHeat}
\end{align}
\end{proposition}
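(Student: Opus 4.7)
The plan is to obtain the proposition as an instance of the abstract framework of Section 2.1 applied to the space-time operator $\VL$ from \eqref{eq:Para1stOrd}. Once assumptions (A1)--(A2) are verified for this operator with data space $\bbL(\domain) = L^2(\domain)^d \times L^2(\domain) \times L^2(\spatialdomain)$ and trial space $\bbV(\domain)$ equipped with the norm \eqref{eq:bbVheatnorm}, the existence, uniqueness and LSQ characterization follow immediately from \eqref{eq:PDEUniq} and \eqref{eq:FoSLSMin}: the error--residual equivalence forces the unique $U$ with $\VL U = F$ to coincide with the unique minimizer of $\LS(\cdot;F) = \norm[\bbL(\domain)]{F - \VL\cdot}^2$, which in the present setting unfolds to \eqref{eq:LSQLHeat}. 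Assumptions (A3) and (A4) for the norm \eqref{eq:bbVheatnorm} have already been noted in the text, since only local differential operators and an $L^2$-trace at $t=0$ appear.

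The first step is to check continuity, i.e.\ the upper bound in (A1). For $\bsv = (v_1,\bsv_2)\in\bbV(\domain)$, the algebraic component $\bsv_2 + \bsa\nabla_x v_1$ is bounded in $L^2(\domain)^d$ by $\norm[L^\infty]{\bsa}$ and the first two terms of \eqref{eq:bbVheatnorm}; the PDE component $\partial_t v_1 + \divv_x \bsv_2 + \bsb\cdot\nabla_x v_1 + cv_1$ is controlled by $\norm[L^2(\domain)]{\divv \bsv}$ together with the lower-order contributions bounded via \eqref{eq:ParabContin}; and the initial trace $v_1(0,\cdot)\in L^2(\spatialdomain)$ is present in \eqref{eq:bbVheatnorm} directly (this is precisely the reason the last term is retained, cf.\ footnote~\ref{fn:tracetimezero}). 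This yields $\norm[\bbL(\domain)]{\VL\bsv}\leq C_\VL\norm[\bbV(\domain)]{\bsv}$.

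The main work lies in the lower bound of (A1). The strategy is to show that for any $F=(\bszero,f,u_0)\in\bbL(\domain)$ there exists $\bsv\in\bbV(\domain)$ with $\VL\bsv=F$ and $\norm[\bbV(\domain)]{\bsv}\lesssim\norm[\bbL(\domain)]{F}$. To this end, I invoke the classical well-posedness theory for parabolic PDEs: under \eqref{eq:ParabSpatialwellp}, the scalar problem \eqref{eq:Parabolic} admits for each $(f,u_0)\in L^2(\domain)\times L^2(\spatialdomain)$ a unique $u\in L^2(I;H^1_0(\spatialdomain))$ with $\partial_t u\in L^2(I;H^{-1}(\spatialdomain))$ satisfying the a priori bound dictated by the G\aa{}rding inequality \eqref{eq:ParabGarding} and Gr\"onwall's lemma. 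Setting $v_1:=u$ and $\bsv_2:=-\bsa\nabla_x u$, the first component of $\VL\bsv$ vanishes identically; the second component returns $f$ by construction, which shows that $\partial_t v_1 + \divv_x \bsv_2 \in L^2(\domain)$ and thus $\bsv\in\bbV(\domain)$; and the third component returns $u_0$ by the continuous embedding of solutions into $C(\overline{I};L^2(\spatialdomain))$. The corresponding two-sided norm estimate is the content of \cite[Theorem~2.3 and Remark~2.4]{GaStLSQ}, which covers the advection term in the formulation used here (up to the sign convention of \cite[Theorem~2.2]{GaStLSQApplic}).

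The hardest step is thus the inf-sup/coercivity bound for $\VL$: it is not merely algebraic but encodes parabolic regularity, the trace estimate at $t=0$, and G\aa{}rding's inequality for the spatial form. Given that the cited reference supplies this estimate, the remainder of the argument is bookkeeping: (A2) reduces to showing $F\in\range(\VL)$, which is immediate once $\VL$ is surjective; and then \eqref{eq:PDEUniq} converts well-posedness into quasi-optimality of any minimizer of $\LS(\cdot;F)$ and uniqueness of the exact minimizer, which by strict convexity of $\bsv\mapsto\norm[\bbL(\domain)]{F-\VL\bsv}^2$ coincides with $U$. Expanding $\norm[\bbL(\domain)]{F-\VL\bsv}^2$ componentwise produces precisely \eqref{eq:LSQLHeat}, completing the proof.
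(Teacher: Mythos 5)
Your proposal is correct and follows essentially the same route as the paper: the paper likewise obtains the result by instantiating the abstract FoSLS framework (A1)--(A4), \eqref{eq:PDEUniq}, \eqref{eq:FoSLSMin} and deferring the decisive two-sided bound (the isomorphism property of $\VL$ on $\bbV(\domain)$) to \cite[Theorem 2.3 and Remark 2.4]{GaStLSQ}, exactly as you do. Your added details (continuity of $\VL$, surjectivity via classical parabolic theory, strict convexity of the LSQ functional) are sound, with the minor remark that the $L^2$-bound on the lower-order terms needs only $\bsb,c\in L^\infty(\domain)$ rather than the form-continuity \eqref{eq:ParabContin}.
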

A LSQ formulation
with nonhomogeneous pure Dirichlet or Neumann boundary conditions
is available in \cite[Section 2.2]{GaStLSQ}.
Also the pure advection case is considered recently in \cite{cai2023leastsquares}.
\subsubsection{Space-Time LSQ for the Acoustic Wave Equation}
\label{sec:nLSQWave}
Retaining notation from the preceding Section~\ref{sec:nLSQParab},
in $I\times \spatialdomain$, the propagation of acoustic waves 
in a homogeneous, isotropic medium (extensions of the following 
to anisotropic media are straightforward) 
is governed by the 
\emph{acoustic wave equation} in the 
space-time cylinder $\domain = I \times \spatialdomain$,
\begin{equation}\label{eq:AcWave2ndOrd}
\begin{array}{rcl}
\partial_{tt} u -\Delta_x u & = & f \;\;\mbox{in}\;\; I\times \spatialdomain, 
\\ 
u&=&0 \;\;\mbox{on}\;\; I\times \Gamma,
\\
u(0,\cdot) & = & u_0 \;\; \mbox{in} \;\; \spatialdomain,
\\ 
\partial_tu(0,\cdot) &=& u_1 \;\; \mbox{in} \;\; \spatialdomain
.
\end{array}
\end{equation}
For the first order formulation, 
\cite{FK23} proposes the choices $v:=\partial_t u$, 
$\bsigma := \nabla_x u$ and $\bg = \bsnul$, $v_0=u_1$, $\bsigma_0 = \nabla_x u_0$.
This results in the 
\emph{first order formulation of the linear, acoustic wave equation}
\begin{equation}\label{eq:AcWavefosls}
\begin{array}{rcl}
\partial_t v - \divv_x \bsigma &=& f \;\; \mbox{in} \;\; I\times\spatialdomain,
\\
\partial_t \bsigma - \nabla_x v &=& \bg \;\; \mbox{in} \;\; I\times\spatialdomain,
\\
v &=& 0 \;\; \mbox{on} \;\; I\times\Gamma,
\\
v(0,\cdot) &=& v_0 \;\; \mbox{in} \;\; \spatialdomain,
\\
\bsigma(0,\cdot) &=& \bsigma_0 \;\; \mbox{in} \;\; \spatialdomain 
\;.
\end{array}
\end{equation}
This system fits into the abstract setting \eqref{eq:PDE} 
if we introduce the 
\emph{first order acoustic wave operators} 
\begin{equation}\label{eq:AcWvOps}
\VL U
:=
\VL(v,\bsigma) 
:= 
\left(\begin{array}{c} \calA_0(v,\bsigma) \\ v(0,\cdot) \\ \bsigma(0,\cdot) \end{array}\right)
\;,
\quad 
\mbox{where} 
\;\;
\calA_0(v,\bsigma) 
:= 
\left(
\begin{array}{c} 
\partial_t v - \divv_x \bsigma \\ \partial_t\bsigma - \nabla_x v 
\end{array}
\right)
,
\end{equation}
and the data vector
\begin{equation}\label{eq:WavDatF}
F := \left(\begin{array}{c} f \\ \bg \\  v_0 \\ \bsigma_0 \end{array} \right) 
\in 
L^2(I\times\spatialdomain) \times L^2(I\times\spatialdomain)^d \times L^2(\spatialdomain) \times L^2(\spatialdomain)^d
=: \bbL(\domain)
\;.
\end{equation}
The solution space which renders $\VL$ in \eqref{eq:AcWvOps}
an isomorphism according to \cite[Thm.~3.5]{FK23} 
is
\begin{equation}\label{eq:WavVSpc}
\bbV(\domain) = V_0,
\end{equation}
where $V_0$, as defined in \cite[Sec.~3]{FK23},
is a closed subspace of
\begin{align*}
V
:= 
\big\{ \bsu = (v,\bsigma) \in L^2(I\times \spatialdomain)^{d+1}  : 
   &\, \divv(v,-\bsigma) \in L^2(\domain), \;\; 
   \partial_t \bsigma - \nabla_x v \in L^2(\domain)^d,
   \\
   &\,v(0,\cdot) \in L^2(\spatialdomain), \;\;
   \bsigma(0,\cdot) \in L^2(\spatialdomain)^d
\big\}
\;,
\end{align*}
where $\divv(v,-\bsigma) = \partial_t v - \nabla_x \bsigma$.
We endow $V$ and $\bbV(\domain) = V_0$ with the norm
\begin{align}
\nonumber
\| \bsu \|_{\bbV(\domain)}^2 
:= &\,
\| v \|^2_{L^2(\domain)}
	+ \| \partial_t\bsigma - \nabla_x v \|^2_{L^2(\domain)^d}
	+ \| \bsigma \|^2_{L^2(\domain)^d}
	+ \| \divv(v,-\bsigma) \|^2_{L^2(\domain)}
\\
&\, + \norm[L^2(\spatialdomain)]{ v(0,\cdot) }^2 
	+ \norm[L^2(\spatialdomain)^d]{ \bsigma(0,\cdot) }^2 
\;.
\label{eq:bbVwavenorm}
\end{align}
Conditions (A3)--(A4) hold and
\cite[Thm.~3.5]{FK23} states that 
$\VL$ is an isomorphism.
This isomorphism property entails well-posedness of the 
\emph{least squares formulation}, 
i.e. (A1)--(A2) hold.

\begin{proposition}
\label{prop:wave}
Given $F\in \bbL(\domain)$,
there exists a unique solution $U:= (v,\bsigma) \in \bbL(\domain)$.
This solution is, for given data $F = (f,\bg,v,\bsigma_0)$,
the unique minimizer of the LSQ functional
\begin{align}
\nonumber
(v,\bsigma) 
= \bsu
\mapsto 
\LS(\bsu;F)
&\, = 
\| \VL(v,\bsigma) - (f,\bg,v_0,\bsigma_0) \|^2_{\bbL(\domain)}
\\
\label{eq:LSWave}
&\, = \| \partial_t v - \divv_x\bsigma - f \|^2_{L^2(\domain)}
+ \| \partial_t\bsigma - \nabla_x v - \bg \|^2_{L^2(\domain)^d}
\\\nonumber
&\, \quad +\| v(0,\cdot) - v_0 \|^2_{L^2(\spatialdomain)} 
+ \| \bsigma(0,\cdot) - \bsigma_0 \|^2_{L^2(\spatialdomain)^d}
.
\end{align}
\end{proposition}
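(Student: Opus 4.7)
My plan is to deduce Proposition 2.6 as a direct application of the abstract framework of Section 2.1, so that the bulk of the work reduces to verifying Assumptions (A1) and (A2) for the operator $\VL$ defined in \eqref{eq:AcWvOps} acting between $\bbV(\domain) = V_0$ and $\bbL(\domain)$ as introduced in \eqref{eq:WavVSpc} and \eqref{eq:WavDatF}. Once these are in place, the conclusion follows by invoking the error-residual relation \eqref{eq:PDEUniq} and the minimizer characterization \eqref{eq:FoSLSMin}, both of which are established at the abstract level.

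First, I would check that $\VL \in \calL(\bbV(\domain),\bbL(\domain))$ is bounded. This is immediate from the definition of the norm on $V_0$ in \eqref{eq:bbVwavenorm}, since every component of $\VL U$ appearing in \eqref{eq:AcWvOps} is, by construction, one of the terms controlled by $\| \circ \|_{\bbV(\domain)}$: the space-time differential components $\partial_t v - \divv_x\bsigma$ and $\partial_t\bsigma - \nabla_x v$ are explicitly bounded by the $\divv(v,-\bsigma)$ and curl-like contributions, and the initial traces $v(0,\cdot)$ and $\bsigma(0,\cdot)$ appear as dedicated terms in \eqref{eq:bbVwavenorm}. Hence the upper bound $\| \VL v \|_{\bbL(\domain)} \leq C_{\VL} \| v \|_{\bbV(\domain)}$ holds with an explicit constant.

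Next, the lower bound in (A1) and condition (A2) both follow at once from the isomorphism property of $\VL$ stated in \cite[Thm.~3.5]{FK23}. Indeed, that theorem asserts precisely that $\VL : V_0 \to \bbL(\domain)$ is a boundedly invertible linear map, which simultaneously supplies the coercivity constant $c_{\VL} > 0$ via $\| v \|_{\bbV(\domain)} \leq \| \VL^{-1} \|\,\| \VL v\|_{\bbL(\domain)}$ and shows that $\operatorname{range}(\VL) = \bbL(\domain)$, so that any right-hand side $F\in\bbL(\domain)$, including data of the form \eqref{eq:WavDatF}, lies in the range. The main obstacle in the present argument is thus entirely deferred to \cite{FK23}; this is where the genuine PDE analysis—dealing with the mixed hyperbolic character of the acoustic system and the correct choice of closed subspace $V_0 \subset V$ to encode the temporal trace and boundary conditions—takes place.

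Finally, with (A1)--(A2) verified, the existence of a unique $U = (v,\bsigma) \in \bbV(\domain)$ solving $\VL U = F$ is immediate from injectivity and surjectivity of $\VL$. Applying the abstract identity \eqref{eq:FoSLSMin} then identifies $U$ as the unique minimizer over $\bbV(\domain)$ of $\bsu \mapsto \| F - \VL \bsu \|_{\bbL(\domain)}^2$. Expanding the right-hand side componentwise using \eqref{eq:AcWvOps} and \eqref{eq:WavDatF}, and applying the Pythagorean decomposition that follows from the product structure of $\bbL(\domain)$, produces exactly the four squared-norm terms displayed in \eqref{eq:LSWave}. This completes the reduction and yields the stated characterization of $U$ as the unique LSQ minimizer.
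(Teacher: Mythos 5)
Your proposal matches the paper's own argument: the paper likewise obtains (A1)--(A2) directly from the isomorphism property of $\VL$ in \cite[Thm.~3.5]{FK23} (boundedness being immediate from the definition of the norm \eqref{eq:bbVwavenorm}) and then reads off existence, uniqueness, and the minimizer characterization \eqref{eq:LSWave} from the abstract relations \eqref{eq:PDEUniq} and \eqref{eq:FoSLSMin}. No gaps; this is essentially the same proof.
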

%
\subsection{Optimal Control Problems}
\label{sec:OCP}

The FoSLS formulation accommodates classical (e.g. \cite{JLLOptCtrl})
variational formulations of optimal control problems (OCPs) 
constrained by elliptic or parabolic
PDEs. 
We recapitulate the elegant and versatile 
variational setting and the corresponding 
LSQ functionals from \cite{TFMKOptCtrl2022}.
\subsubsection{Abstract OCP}
\label{sec:AbsOCP}
Consider a PDE constrained optimal control problem,
where the PDE is of the type \eqref{eq:PDE}
and satisfies (A1)--(A4).
In this section, we will denote the differential operator by $\VL_Y$.
Let
$\widehat{\bbL} = L^2(\domain'_1) \times \cdots \times L^2(\domain'_{n'})$
for $n'\in\N$ and polytopal domains $\domain'_1,\ldots,\domain'_{n'}$ satisfying (D),
and let
$\bbV_Y$ 
be such that
$\VL_Y : \bbV_Y \to \widehat{\bbL}$
is an isomorphism and such that (A3)--(A4) hold
with $\widehat{\bbL}$ instead of $\bbL(\domain)$
and $\bbV_Y$ instead of $\bbV(\domain)$,\footnote{
For brevity, we omit in this section the domain $\domain$
from the notation for function spaces.}
which implies that for all $F_Y\in\widehat{\bbL}$
conditions (A1) -- (A4) are satisfied.

In addition, we consider the space of controls
$\bbX = L^2(\domain''_1) \times \cdots \times L^2(\domain''_{{n''}})$
for $n'' \in \N$ and 
polytopal domains $\domain''_1,\ldots,\domain''_{n''}$ satisfying (D),
and 
bounded linear operators
$\VB : \bbX \to \widehat{\bbL}$
with adjoint
$\VB^* : \widehat{\bbL} \to \bbX$,
$\VA : \widehat{\bbL} \to \widehat{\bbL}$
with adjoint
$\VA^* : \widehat{\bbL} \to \widehat{\bbL}$,
$\VI_Y : \bbV_Y \to \widehat{\bbL}$
and
the self-adjoint, positive definite
$\VC : \bbX \to \bbX$.

For a closed subspace $\bbX_\ad \subset \bbX$ and given $Z\in\widehat{\bbL}$,
we consider the unconstrained optimal control problem of minimizing
\begin{align}
\label{eq:OCPcost}
J(\control) := \norm[\widehat{\bbL}]{ \VA \VI_Y Y(\control) - Z }^2 
	+ ( \VC \control , \control )_{\bbX}
\end{align}
over $\control\in\bbX_\ad$,
constrained by the PDE
\begin{align}
\label{eq:OCPpde}
\VL_Y Y = F_Y - \VB \control
,
\end{align}
of the form \eqref{eq:PDE},
with control $\control \in \bbX$
and solution $Y(\control) \in \bbV_Y$.

As in \cite[Section 2.3]{TFMKOptCtrl2022},
following \cite[Chapter 2, Theorem 1.4]{JLLOptCtrl},
we can write the OCP 
$J(\control) = \min_{V\in \bbX_\ad} J(V)$ subject to \eqref{eq:OCPpde}
equivalently as
\begin{subequations}
\label{eq:OCPpdesyst}
\begin{align}
\label{eq:OCPpdesysti}
\VL_Y Y = &\, F_Y - \VB \control,
	\\
	\label{eq:OCPpdesystii}
\VL_P P = &\, \VA^* ( \VA \VI_Y Y - Z ),
	\\
	\label{eq:OCPpdesystiii}
( - \VB^* \VI_P P + \VC \control, V - \control ) \geq &\, 0,
	\qquad 
	\text{ for all }
	V \in \bbX_\ad
	,
\end{align}
\end{subequations}
where 
$\VL_P : \bbV_P \to \widehat{\bbL}$
is an isomorphism
satisfying (A3) -- (A4)
with 
$\widehat{\bbL}$ instead of $\bbL(\domain)$
and with 
$\bbV_P$ instead of $\bbV(\domain)$ 
(it thus also satisfies (A1)--(A2) for all right-hand sides in $\widehat{\bbL}$),
and where $\VL_Y$ and $\VL_P$ are adjoint in the sense that
for a bounded linear operator
$\VI_P : \bbV_P \to \widehat{\bbL}$ holds
\begin{align}
\label{eq:adjointLyLp}
( \VL_Y Y, \VI_P P) = ( \VI_Y Y, \VL_P P)
	\qquad
	\text{ for all } 
	Y \in \bbV_Y, P\in\bbV_P
	.
\end{align}

Because $\bbX_\ad \subset \bbX$ is a closed subspace,
the variational inequality \eqref{eq:OCPpdesystiii} for the optimal control
has the solution 
$\control = \Pi_\ad \VC^{-1} \VB^* \VI_P P$,
where
$\Pi_\ad : \bbX \to \bbX_\ad$ 
is the orthogonal projection with respect to the bilinear form
$(\VC \cdot , \cdot )$,
see \cite[Section 2.5]{TFMKOptCtrl2022}.
Thus,
\eqref{eq:OCPpdesyst} reduces to 
\begin{subequations}
\label{eq:OCPpdesystunc}
\begin{align}
\label{eq:OCPpdesystunci}
\VL_Y Y = &\, F_Y - \VB \Pi_\ad \VC^{-1} \VB^* \VI_P P,
	\\
	\label{eq:OCPpdesystuncii}
\VL_P P = &\, \VA^* ( \VA \VI_Y Y - Z ).
\end{align}
\end{subequations}

This fits into the abstract setting \eqref{eq:PDE}
with the choices 
$U := ( Y, P ) \in \bbV_Y \times \bbV_P =: \bbV$,
$\bbL := \widehat{\bbL} \times \widehat{\bbL}$
and 
$\VL:\bbV\to\bbL$
such that
\begin{align}
\label{eq:FoSLSOCP}
\VL U 
:=
\VL 
\left(
\begin{array}{c} 
Y
\\
P
\end{array}
\right)
:=
\left(
\begin{array}{c} 
\VL_Y Y + \VB \Pi_\ad \VC^{-1} \VB^* \VI_P P
\\
\VL_P P - \VA^* \VA \VI_Y Y
\end{array}
\right)
=
\left(
\begin{array}{c} 
F_Y
\\
- \VA^* Z
\end{array}
\right)
=: F
\in \bbL
.
\end{align}
\begin{proposition}[{{\cite[Theorem 7]{TFMKOptCtrl2022}}}]
\label{prop:FoSLSOCP}
The FoSLS formulation \eqref{eq:FoSLSOCP}
satisfies Assumptions (A1), (A3)--(A4).
\end{proposition}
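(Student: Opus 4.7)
The plan is to verify (A3) and (A4) directly from the product structure of $\bbV := \bbV_Y \times \bbV_P$ and $\bbL := \widehat{\bbL} \times \widehat{\bbL}$, to obtain the upper bound in (A1) from boundedness of each block of the $2\times 2$ block operator $\VL$, and to obtain the lower bound in (A1) by invoking \cite[Theorem~7]{TFMKOptCtrl2022} together with the Banach open mapping theorem.

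First I would check (A3) and (A4). The target space $\bbL = \widehat{\bbL} \times \widehat{\bbL}$ is a Cartesian product of $L^2$-spaces, so its squared norm is additive over disjoint measurable subsets and continuous with respect to shrinking measure; these are precisely (A3)--(A4) for $\bbL$. On $\bbV = \bbV_Y \times \bbV_P$, the product norm is $\| (Y,P) \|_{\bbV}^2 = \| Y \|_{\bbV_Y}^2 + \| P \|_{\bbV_P}^2$, so the corresponding properties on $\bbV$ follow componentwise from the hypothesis that (A3)--(A4) already hold for $\bbV_Y$ and $\bbV_P$ separately.

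For the upper bound in (A1), the operator $\VL$ has the block form with diagonal entries $\VL_Y, \VL_P$ (bounded by assumption) and off-diagonal entries $\VB \Pi_\ad \VC^{-1} \VB^* \VI_P$ and $-\VA^* \VA \VI_Y$, which are compositions of bounded linear operators between Hilbert spaces: $\VC^{-1}$ is bounded because $\VC$ is self-adjoint and positive definite, and the projection $\Pi_\ad$ onto a closed subspace is a contraction in the $\VC$-inner product. A finite constant $C_\VL$ then results from the triangle inequality and the Cauchy--Schwarz inequality applied to the $2\times 2$ block structure.

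The main obstacle, and the genuine analytic content of the proposition, is the lower bound in (A1), i.e., that $\VL : \bbV \to \bbL$ is a bijection. This is precisely the assertion of \cite[Theorem~7]{TFMKOptCtrl2022}: for every $(F_Y, -\VA^* Z) \in \bbL$, the primal-dual system \eqref{eq:OCPpdesystunc} admits a unique solution $(Y,P) \in \bbV_Y \times \bbV_P$. The underlying argument in \cite{TFMKOptCtrl2022} follows the classical Lions-type framework for linear-quadratic PDE-constrained OCPs: strict convexity of the cost $J$ with positive definite $\VC$, combined with continuity of the control-to-state map (an affine isomorphism via invertibility of $\VL_Y$), yields a unique minimizer $\control^*$, whose associated adjoint $P^*$ is uniquely determined by \eqref{eq:OCPpdesystii}. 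Given this bijectivity and the already-established continuity of $\VL$, the Banach open mapping theorem delivers $\VL^{-1} \in \calL(\bbL, \bbV)$, and $c_\VL := 1/\| \VL^{-1} \|_{\calL(\bbL,\bbV)} > 0$ furnishes the required lower bound.
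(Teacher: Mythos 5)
Your treatment of (A3)--(A4) (componentwise inheritance from $\bbV_Y$, $\bbV_P$ and the product structure of $\bbL=\widehat{\bbL}\times\widehat{\bbL}$) and of the upper bound in (A1) (boundedness of the four blocks of $\VL$) is fine and consistent with the paper, whose proof simply inherits (A3)--(A4) from the assumptions on $\VL_Y,\VL_P$ and cites \cite[Theorem 7]{TFMKOptCtrl2022} for (A1).

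The gap is in your argument for the lower bound. You read \cite[Theorem 7]{TFMKOptCtrl2022} as asserting that $\VL:\bbV\to\bbL$ is a \emph{bijection} and then apply the open mapping theorem. This is not what the cited result can provide in this setting, and it is incompatible with the paper's own framework: the paper explicitly treats ${\rm range}(\VL)$ as a (possibly proper) closed subspace of $\bbL$ and states that the OCPs do \emph{not} satisfy (A2) in general (see Remark \ref{rem:ocpa2} and Section \ref{sec:LSQFEM}); if $\VL$ were onto $\bbL$, (A2) would hold automatically for every datum, which is precisely what the proposition deliberately does not claim. Moreover, your justification of surjectivity is itself flawed: right-hand sides of the form $(F_Y,-\VA^* Z)$ do not exhaust $\bbL$, since $-\VA^* Z$ only ranges over ${\rm range}(\VA^*)$ (for the Poisson OCP, $-\VA^* Z=(-z,\bszero)$, so the flux component of the second block is always zero). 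Consequently, unique solvability of the optimality system \eqref{eq:OCPpdesystunc} for admissible OCP data --- which is what the Lions-type argument you sketch delivers --- says nothing about $\| \VL v\|_{\bbL}\geq c_\VL\|v\|_{\bbV}$ for \emph{all} $v\in\bbV$. The fallback, applying the open mapping theorem to $\VL:\bbV\to{\rm range}(\VL)$, would require knowing that the range is closed, which (given injectivity) is equivalent to the lower bound you are trying to prove, so the argument becomes circular. What (A1) requires, and what \cite[Theorem 7]{TFMKOptCtrl2022} actually supplies and the paper cites, is the two-sided norm equivalence $c_\VL\|v\|_{\bbV}\leq\|\VL v\|_{\bbL}\leq C_\VL\|v\|_{\bbV}$ itself --- a genuinely quantitative coercivity estimate for the coupled block operator that must be either cited as such or proved directly from the isomorphism properties of $\VL_Y,\VL_P$ and the structure of the coupling terms; it cannot be recovered from well-posedness of the optimal control problem alone.
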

\begin{proof}
Assumption (A1) holds for $\VL$ by \cite[Theorem 7]{TFMKOptCtrl2022}.
Assumptions (A3)--(A4) hold for $\VL$,
because we assumed that they hold for $\VL_Y$, $\VL_P$.
\end{proof}
With the corresponding LSQ functional
\begin{align}
\label{eq:LSQLOCP}
\LS(U;F)
:= &\,
\| \VL_Y Y + \VB \Pi_\ad \VC^{-1} \VB^* \VI_P P - F_Y \|^2_{\widehat{\bbL}}
+
\| \VL_P P - \VA^* ( \VA \VI_Y Y - Z ) \|^2_{\widehat{\bbL}} 
,
\end{align}
by (A1), there exists a unique 
least squares solution
$U \in \bbV(\domain)$
to \eqref{eq:FoSLSMin}, see also \cite[Theorem 8]{TFMKOptCtrl2022}.
If (A2) is not satisfied, 
the residual of the solution 
$\norm[\bbL(\domain)]{ F - \VL U }$
is strictly positive.
The error-residual relation \eqref{eq:PDEUniq} follows directly from (A1)
and holds regardless of (A2).

\begin{remark}
\label{rem:ocpa2}

Because ${\rm range}(\VL) \subset \bbL$ is a closed subspace,
for all $F_Y \in \widehat{\bbL}$ and $Z\in\widehat{\bbL}$
we obtain the orthogonal decomposition
$F = ( F_Y, - \VA^* Z ) = \Pi_{{\rm range}(\VL)} F + ( F - \Pi_{{\rm range}(\VL)} F )$.
Similarly, 
for all $v \in \bbV$ 
the LSQ functional from \eqref{eq:LSQLOCP}
can be decomposed into
\begin{align}
\label{eq:LSQLdecomp}
\LS(v;F)
= &\,
\LS(v;\Pi_{{\rm range}(\VL)} F)
+
\| \Pi_{{\rm range}(\VL)} F - F \|^2_{\bbL}
.
\end{align}
From this we see that 
\begin{equation}
\label{eq:FoSLSMinPiF}
U 
= {\rm arg}\min_{v\in \bbV(\domain)} 
\LS(v;F)
= {\rm arg}\min_{v\in \bbV(\domain)} 
\LS(v;\Pi_{{\rm range}(\VL)} F)
,
\end{equation}
where $\LS(v;\Pi_{{\rm range}(\VL)} F)$ satisfies (A1)--(A4).

\end{remark}

\subsubsection{OCP for the Poisson Equation}
\label{sec:OCPPoisson}

For a bounded, polytopal Lipschitz domain $\domain \subset \R^d$ satisfying (D), 
for some $d\in\N$,
and
given $z \in L^2(\domain)$ and $\lambda>0$,
we consider the OCP 
$\min_{q\in L^2(\domain)} \norm[L^2(\domain)]{ u - z }^2 + \lambda \norm[L^2(\domain)]{ q }^2$
constrained by the Poisson equation 
$- \Delta u = f + q$,
for the right-hand side $f\in L^2(\domain)$
and the control $q \in L^2(\domain)$,
with solution $u \in H^1_0(\domain)$.
%
The following discussion is analogous to that in \cite[Section 3.1]{TFMKOptCtrl2022}.
We use the notation of \cite[Section 3.1]{FuePraeLSQ},
as introduced in Section \ref{sec:Poisson},
which differs slightly from that in \cite[Section 3.1]{TFMKOptCtrl2022}.

With
$\widehat{\bbL} = L^2(\domain)^{d+1}$,
$Z := ( z, \bszero ) \in \widehat{\bbL}$
and
$\bbV_Y = H^1_0(\domain) \times H(\divv;\domain)$,
let 
$\VL_Y : \bbV_Y \to \widehat{\bbL}$ 
and 
$F_Y = ( f, \bszero ) \in \widehat{\bbL}$ 
be the first order differential operator and right-hand side
defined in \eqref{eq:PoissonFoSLS}.
Then, \eqref{eq:adjointLyLp} is satisfied
with
$\VI_Y : \bbV_Y \to \widehat{\bbL} : (u, \bsigma) \mapsto (u, \bsigma)$
and
$\bbV_P = \bbV_Y$,
$\VL_P = \VL_Y$, 
$\VI_P = \VI_Y$.
In addition, let
$\VA : \widehat{\bbL} \to \widehat{\bbL} : ( v, \bstau ) \mapsto ( v, \bszero )$,
$\bbX_\ad := \bbX := L^2(\domain)$,
$X := q$,
$\Pi_\ad = \Id_\bbX$,
$\VB : \bbX \to \widehat{\bbL} : w \mapsto - ( w, \bszero )$
and
$\VC : \bbX \to \bbX : w \mapsto \lambda w$.
%
Then, the OCP introduced at the beginning of this subsection 
is equivalent to \eqref{eq:FoSLSOCP}.

\subsubsection{OCP for Maxwell Equations}
\label{sec:OCPMaxwell}
For a bounded, polytopal Lipschitz domain $\domain \subset \R^3$
satisfying condition (D)
and
given $\bsz \in L^2(\domain)^3$ and $\lambda>0$,
we consider the OCP 
$\min_{\bsq\in L^2(\domain)^3} \norm[L^2(\domain)^3]{ \bsu - \bsz }^2 
	+ \lambda \norm[L^2(\domain)^3]{ \bsq }^2$
constrained by the Maxwell equations 
$\curl \bsigma + c \bsu = \bsf + \bsq$
and
$\curl \bsu - \bsigma = \bszero $,
for the right-hand side $\bsf \in L^2(\domain)^3$
and the control $\bsq \in L^2(\domain)^3$,
with solution $(\bsu,\bsigma) \in H_0(\curl;\domain) \times H(\curl;\domain)$.
The following discussion is analogous to that in \cite[Section 3.3]{TFMKOptCtrl2022},
using the notation of \cite[Section 3.4]{FuePraeLSQ}
as introduced in Section \ref{sec:nLSQCEM}.

With
$\widehat{\bbL} = L^2(\domain)^6$,
$Z := ( \bsz, \bszero) \in \widehat{\bbL}$
and
$\bbV_Y = H_0(\curl;\domain) \times H(\curl;\domain)$,
let 
$\VL_Y : \bbV_Y \to \widehat{\bbL}$ 
and 
$F_Y = ( \bsf, \bszero ) \in \widehat{\bbL}$ 
be the first order differential operator and right-hand side
defined in \eqref{eq:FoSLSMaxwell}.
Then, \eqref{eq:adjointLyLp} is satisfied
with
$\VI_Y : \bbV_Y \to \widehat{\bbL} : (\bsu, \bsigma) \mapsto (\bsu, \bsigma)$
and
$\bbV_P = \bbV_Y$,
$\VL_P = \VL_Y$, 
$\VI_P = \VI_Y$.
In addition, let
$\VA : \widehat{\bbL} \to \widehat{\bbL} : ( \bsv, \bstau ) \mapsto ( \bsv, \bszero )$,
$\bbX_\ad := \bbX := L^2(\domain)^3$,
$X := \bsq$,
$\Pi_\ad = \Id_\bbX$,
$\VB : \bbX \to \widehat{\bbL} : \bsw \mapsto - ( \bsw, \bszero )$
and
$\VC : \bbX \to \bbX : \bsw \mapsto \lambda \bsw$.
%
Then, the OCP introduced at the beginning of this subsection 
is equivalent to \eqref{eq:FoSLSOCP}.

\subsubsection{OCP for the Heat Equation}
\label{sec:OCPSpacetime}
In this section, we introduce the optimal control problem for the heat equation
as considered in \cite[Section 3.4]{TFMKOptCtrl2022},
which is a special case of \eqref{eq:Parabolic} from Section \ref{sec:nLSQParab},
as we fix $\bsa \in \R^{d\times d}$ to be the identity matrix, 
$\bsb=\bszero$ and $c = 0$.
For $\domain = I \times \spatialdomain$, $I = ( 0, T )$, $T>0$
and a bounded, polytopal Lipschitz domain $\spatialdomain \subset \R^d$
for some $d\in\N$,
satisfying (D), 
with boundary $\Gamma := \partial\spatialdomain$,
given $z \in L^2(\domain)$, $z_T \in L^2(\spatialdomain)$, $\lambda,\lambda_0>0$,
we consider the OCP 
$\min_{q\in L^2(\domain), q_0\in L^2(\spatialdomain)} 
\norm[L^2(\domain)]{ u - z }^2 + \norm[L^2(\spatialdomain)]{ u(T,\cdot) - z_T }^2 
	   + \lambda \norm[L^2(\domain)]{ q }^2 + \lambda_0 \norm[L^2(\spatialdomain)]{ q_0 }^2$
constrained by 
\begin{equation} \label{eq:ParabolicOCP}
\begin{array}{rcl} 
\partial_t u - \divv_x( \nabla_x u ) & = & f + q
\quad \mbox{in}\quad I\times \spatialdomain
\\
u & = & 0 
\quad \mbox{on} \quad I\times \Gamma,
\\
u(0,\cdot) & = & u_0 + q_0
\quad \mbox{in} \quad \spatialdomain,
\end{array}
\end{equation}
for the right-hand side $f\in L^2(\domain)$,
the initial value $u_0\in L^2(\spatialdomain)$
and the controls $q \in L^2(\domain)$ 
and $q_0 \in L^2(\spatialdomain)$,
with solution 
$u \in L^2(I;H^1_0(\spatialdomain)) \cap H^1(I;H^{-1}(\spatialdomain))$.
%
The following discussion is similar to that in \cite[Section 3.4]{TFMKOptCtrl2022}.

With
$\widehat{\bbL} = L^2(\domain)^d \times L^2(\domain) \times L^2(\spatialdomain)$
and given $Z := ( \bszero, z, z_T ) \in \widehat{\bbL}$,
for
\begin{align*}
\bbV_Y = \{\bsu\in L^2(I;H^1_0(\spatialdomain))\times L^2(I\times \spatialdomain)^d: \divv\bsu\in L^2(I\times \spatialdomain)\}
	,
\end{align*}
with norm\footnote{This norm is equivalent to 
the norm on $\bbV$ introduced in Section \ref{sec:nLSQParab}.
The arguments from Footnote \ref{fn:tracetimezero}
also apply here, the new term can be treated in the same way as 
$\norm[L^2(\spatialdomain)]{ u_1(0,\cdot) }^2$.
We will need the additional term in Remark \ref{rem:plainconvOCP} 
to show local boundedness (L) of the map $\VI_Y$ defined below.
For details see that remark.}
\begin{align}
\nonumber
\| \bsu \|_{\bbV_Y}^2 
:= &\, 
\| u_1 \|^2_{L^2(I;H^1(\spatialdomain))} 
	+ \| \bsu_2 \|^2_{L^2(I;L^2(\spatialdomain)^d)} 
	+ \| \divv \bsu \|^2_{L^2(I\times\spatialdomain)}
\\
&\, + \norm[L^2(\spatialdomain)]{ u_1(0,\cdot) }^2 
	+ \norm[L^2(\spatialdomain)]{ u_1(T,\cdot) }^2 
,
\label{eq:bbVYnorm}
\end{align}
let 
$\VL_Y : \bbV_Y \to \widehat{\bbL}$ 
be defined by
\begin{equation}
\label{eq:Heat1stOrd}
\VL_Y U
:=
\VL_Y
\left(\begin{array}{c} 
u_1
\\
\bsu_2
\end{array}
\right)
:= 
\left(
\begin{array}{c} 
\bsu_2 + \nabla_x u_1
\\
\partial_t u_1 +\divv_x \bsu_2 
\\
u_1(0,\cdot)
\end{array}
\right)
\end{equation}
and let 
$F_Y = ( \bszero, f, u_0 ) \in \widehat{\bbL}$ 
be the 
right-hand side
defined in \eqref{eq:Para1stOrd}.
Then, \eqref{eq:adjointLyLp} is satisfied
with
$\VI_Y : \bbV_Y \to \widehat{\bbL} : ( u_1, \bsu_2 ) \mapsto ( \bsu_2, u_1, u_1(T,\cdot) )$,
$\bbV_P = \bbV_Y$,
$\VL_P : \bbV_P \to \widehat{\bbL} $
defined by
\begin{equation}\label{eq:Para1stOrdAdj}
\VL_P P
=
\VL_P 
\left(\begin{array}{c} 
p_1
\\
\bsp_2
\end{array}
\right)
:=
\left(
\begin{array}{c} 
\bsp_2 - \nabla_x p_1
\\
-\partial_t p_1 - \divv_x \bsp_2
\\
p_1(T,\cdot)
\end{array}
\right)
\end{equation}
and
$\VI_P :  ( p_1, \bsp_2) \mapsto ( \bsp_2, p_1, p_1(0,\cdot) )$.
In addition, let
$\VA : \widehat{\bbL} \to \widehat{\bbL} : ( \bsv_1, v_2, v_3 ) \mapsto ( \bszero, v_2, v_3 )$,
$\bbX_\ad := \bbX := L^2(\domain) \times L^2(\spatialdomain)$,
$X := ( q, q_0 )$,
$\Pi_\ad = \Id_\bbX$,
$\VB : \bbX \to \widehat{\bbL} : ( w, w_0 ) \mapsto - ( \bszero, w, w_0 )$
and
$\VC : \bbX \to \bbX : ( w, w_0 ) \mapsto ( \lambda w, \lambda_0 w_0 )$.
%
Then, the OCP introduced at the beginning of this subsection 
is equivalent to \eqref{eq:FoSLSOCP}.

The case $q_0 = 0$ can also be treated, and corresponds to taking
$\bbX_\ad := \bbX := L^2(\domain)$, 
$X := q$,
$\VB : \bbX \to \widehat{\bbL} : w \mapsto - ( \bszero, w, 0 )$
and 
$\VC : \bbX \to \bbX : w \mapsto \lambda w$ for some $\lambda>0$.
Also in this case, the OCP is equivalent to \eqref{eq:FoSLSOCP}.

\section{Least Squares Approximation}
\label{sec:LSQFEM}
Based on the LSQ formulation of 
PDE initial boundary value problems in Section~\ref{sec:LSQForm}, 
stable discretizations are readily obtained
by constraining the minimization in \eqref{eq:FoSLSMin} to  
finite-dimensional subspaces 
$\bbV_\bullet(\domain)$ of $\bbV(\domain)$.
Eventually, in the next section, we choose $\bbV_\bullet(\domain)$ 
as one of the NN emulated, finite-dimensional spaces constructed in \cite{LODSZ22_991}.

The \emph{Least Squares Galerkin Method} (``LSQ-G'' for short) 
seeks
\begin{equation}\label{eq:FoSLSMinN}
U_{\bullet} 
= 
{\rm arg}\min_{v_{\bullet} \in \bbV_{\bullet}(\domain)} 
	\| F - \VL v_{\bullet} \|^2_{\bbL(\domain)}
\;.
\end{equation}
Evidently, 
for \emph{linear operators $\VL$}, and 
for \emph{linear subspaces $\bbV_{\bullet}(\domain)$}, 
the functional 
$v_{\bullet}\mapsto \| F - \VL v_{\bullet} \|^2_{\bbL(\domain)}$ 
is a quadratic functional
of the coefficients in representations of $v_{\bullet}$ 
in terms of a basis of the (linear) subspace $\bbV_\bullet$.
There exists a unique minimizer of \eqref{eq:FoSLSMinN} 
which is quasi-optimal by \eqref{eq:PDEUniq} 
\be\label{eq:LSQQO}
c_\VL \| U - U_\bullet \|_{\bbV(\domain)}
	\leq \| F - \VL U_\bullet \|_{\bbL(\domain)}
	= \min_{v_\bullet \in \bbV_\bullet(\domain)} \| F - \VL v_\bullet \|_{\bbL(\domain)}
	\leq C_\VL \min_{v_\bullet \in \bbV_\bullet(\domain)} \| U - v_\bullet \|_{\bbV(\domain)}
.
\ee

For the examples from Sections \ref{sec:Source}--\ref{sec:OCP},
we will use lowest order FE spaces in the de Rham complex
on regular, simplical partitions $\calT$ 
of polytopal domains $\domain \subset \bbR^d$.
First, we recall from \cite[Section 1.4.1]{LODSZ22_991}
notation for such partitions.
For
$k\in\{0,\ldots,d\}$ we define a $ k $-simplex $ K $ by
$ K = \conv(\{a_0,\ldots, a_k\}) \subset \R^d $, for some
$ a_0,\ldots,a_k \in \R^d $ which do not all lie in one affine
subspace of dimension $k-1$, and where
\begin{equation*}
\conv (Y) 
:= 
\setc{x = \sum_{y \in Y}\lambda_y y}{ \lambda_y\geq0 \, \text{ and }\, \sum_{y \in Y} \lambda_y = 1 }
\end{equation*}
denotes the closed convex hull.\footnote{
In \cite{LODSZ22_991}, 
$\conv$ denotes the \emph{open convex hull}.
As a result, in \cite{LODSZ22_991} simplices are open by definition.
Here, simplices are closed by definition.}
By $\snorm{T}$ we will denote the
$k$-dimensional Lebesgue measure of a $k$-simplex.  We consider a
simplicial partition $ \calT $ on $ \domain $ of $d$-simplices,
i.e. 
$ \overline{\domain} = \bigcup_{K\in \calT} K $ and
$ \interior K\cap \interior K' = \emptyset$, for all $ K\neq K' $.
We assume that $ \calT $ is a \emph{regular} partition, 
i.e. for all distinct $K, K' \in\calT$ 
it holds that $K\cap K'$ 
is a $k$-subsimplex of $K$ for some $k\in\{0,\ldots,d-1\}$.
I.e., 
there exist
$a_0,\ldots,a_d \in \domain$ 
such that $K = \conv(\{a_0,\ldots,a_d\})$
and
$K\cap K' = \conv(\{a_0,\ldots,a_k\})$.
%
%
Let $ \calV $ be the set of vertices of $ \calT $.  We also let
$ \calF,\calE $ be the sets of $ (d-1) $- and $ 1 $-subsimplices of
$ \calT $, whose elements are called \emph{faces} and \emph{edges},
respectively, that is
\begin{align*}
  \calF &:= \set{f \subset \overline{\domain} }{ \exists K = \conv(\{a_0,\ldots, a_d\})\in \calT, \exists i\in\{0,\ldots,d\}
          \text{ with } f = \conv(\{a_0,\ldots, a_{d}\}\backslash \{a_i\} )}, \\
  \calE &:=\set{e \subset \overline{\domain}}{\exists K = \conv(\{a_0,\ldots, a_d\}) \in \calT, \exists i,j\in\{0,\ldots,d\}, i\neq j,
          \text{ with } e = \conv(\{a_i,a_j \})}.
\end{align*}
%
For $\calI \in \{ \calV, \calE, \calF, \calT \}$,
for all $i \in \calI$ we denote by
$s(i) := \# \{ K \in \calT : i \subset K \}$
the number of elements of $\calT$ sharing the subsimplex $i$,
and define
$\mathfrak{s}(\calI) := \max_{i\in\calI} s(i)$.

For the example from Section \ref{sec:Poisson},
we can use the subspace
\begin{align}
\label{eq:bbVPoisson}
\bbV_\bullet(\domain) 
	= &\, 	( \So(\calT) \cap H^1_0(\domain) ) \times \RT(\calT) 
	\subset H^1_0(\domain) \times H(\divv;\domain) 
	= \bbV(\domain)
	.
\end{align}
In \cite{BM2023}, under additional assumptions,
optimal convergence rates as $h(\cT)\to 0$ were proved.
For
the example from Section \ref{sec:Elasticity} we can argue similarly
and use the subspace
\begin{align}
\label{eq:bbVElasticity}
\bbV_\bullet(\domain) 
	= &\, 	( \So(\calT) \cap H^1_0(\domain) )^d \times \RT(\calT)^d 
	\subset H^1_0(\domain)^d \times H(\divv;\domain)^d 
	= \bbV(\domain)
	.
\end{align}
For the example from Section \ref{sec:nLSQCEM}, 
we can consider
\begin{align}
\label{eq:bbVMaxw}
\bbV_\bullet(\domain) 
	= &\, ( \Ne(\calT) \cap H_0(\curl;\domain) ) \times \Ne(\calT) 
	\subset H_0(\curl;\domain) \times H(\curl;\domain)
	= \bbV(\domain)
	.
\end{align}
For the example from Section \ref{sec:nLSQParab},
for any regular, simplicial triangulation $\calT$ of 
the space-time cylinder $\domain = I \times \spatialdomain$,
we can use the subspace
\begin{align}
\label{eq:bbVParab}
\bbV_\bullet(\domain)
= &\, ( \So(\calT) \cap L^2( I; H^1_0( \spatialdomain ) ) ) \times \So(\calT)^d
	\\
	\subset &\, \{\bsu\in L^2(I;H^1_0(\spatialdomain))\times L^2(I\times \spatialdomain)^d: 
		\divv\bsu\in L^2(I\times \spatialdomain) \}
	=
	\bbV(\domain)
.
\nonumber
\end{align}
In this example,
the restriction of the $\bbV$-norm to a simplex $K \in \calT$
needs special attention. 
As in Step 1 in the proof of \cite[Theorem 3.3]{GaStLSQ},
we define for all $K \in \calT$ and all $\bsu = ( u_1, \bsu_2 ) \in \bbV(\domain)$
\begin{align}
\label{eq:VKtracezeroParab}
\| \bsu \|_{\bbV(K)}^2 
:= 
\| u_1 \|^2_{L^2(K)}
	+ \| \nabla_x u_1 \|^2_{L^2(K)^d} 
	+ \| \bsu_2 \|^2_{L^2(K)^d} 
	+ \| \divv \bsu \|^2_{L^2(K)}
	+ \norm[L^2(\partial_0 K)]{ u_1(0,\cdot)|_{\partial_0 K} }^2
\;,
\end{align}
where $\partial_0 K = K \cap ( \{0\} \times \Omega )$.
For the example from Section \ref{sec:nLSQWave},
we can use the same spacetime finite element space 
defined in
\eqref{eq:bbVParab}
(see \cite[Section 4.1]{FK23}).
Again,
the restriction of the $\bbV$-norm to a simplex $K \in \calT$
needs special attention. 
We define for all $K \in \calT$ and all $\bsu = ( v, \bsigma ) \in \bbV(\domain)$
\begin{align}
\nonumber
\| \bsu \|_{\bbV(K)}^2 
:= &\, \| v \|^2_{L^2(K)}
	+ \| \partial_t\bsigma - \nabla_x v \|^2_{L^2(K)^d}
	+ \| \bsigma \|^2_{L^2(K)^d}
	+ \| \divv(v,-\bsigma) \|^2_{L^2(K)}
\\
&\, + \norm[L^2(\partial_0 K)]{ v(0,\cdot)|_{\partial_0 K} }^2 
	+ \norm[L^2(\partial_0 K)^d]{ \bsigma(0,\cdot)|_{\partial_0 K} }^2 
\;,
\label{eq:VKtracezeroWave}
\end{align}
where $\partial_0 K = K \cap ( \{0\} \times \Omega )$.

By \cite[Theorem 8]{TFMKOptCtrl2022},
the optimal control problems from Section \ref{sec:OCP},
although they do not satisfy (A2),
do admit a unique Galerkin solution to \eqref{eq:FoSLSMinN}
which satisfies \eqref{eq:LSQQO}.
For the OCP from Section \ref{sec:OCPPoisson}, 
constrained by the Poisson equation,
we can discretize 
$\bbV_Y = \bbV_P = H^1_0(\domain) \times H(\divv;\domain)$ 
as in \eqref{eq:bbVPoisson}.
For the OCP from Section \ref{sec:OCPMaxwell},
constrained by the Maxwell equations,
we can discretize
$\bbV_Y = \bbV_P = H_0(\curl;\domain) \times H(\curl;\domain)$
as in \eqref{eq:bbVMaxw}.
For the heat equation OCP from Section \ref{sec:OCPSpacetime},
we can discretize 
$\bbV_Y = \bbV_P$
as in \eqref{eq:bbVParab}.
Special attention to the norm on $\bbV_Y = \bbV_P$ is necessary,
because the $\bbV_Y$-norm defined in \eqref{eq:bbVYnorm}
has the additional term $\norm[L^2(\spatialdomain)]{ u_1(T,\cdot) }^2$
when compared to the $\bbV$-norm defined in \eqref{eq:bbVheatnorm}.
In particular, we define 
for all $K \in \calT$ and all $\bsu = ( u_1, \bsu_2 ) \in \bbV_Y(\domain)$
\begin{align}
\| \bsu \|_{\bbV_Y(K)}^2 
:= &\,
\| u_1 \|^2_{L^2(K)}
	+ \| \nabla_x u_1 \|^2_{L^2(K)^d} 
	+ \| \bsu_2 \|^2_{L^2(K)^d} 
	+ \| \divv \bsu \|^2_{L^2(K)}
	+ \norm[L^2(\partial_0 K)]{ u_1(0,\cdot)|_{\partial_0 K} }^2
	\nonumber\\
	& + \norm[L^2(\partial_T K)]{ u_1(T,\cdot)|_{\partial_T K} }^2
\;,
\label{eq:VYKtracezeroT}
\end{align}
where $\partial_0 K = K \cap ( \{0\} \times \Omega )$
and $\partial_T K = K \cap ( \{T\} \times \Omega )$.

\section{Deep Least Squares}
\label{sec:NeuLSQ}
%
With the generic LSQ formulation \eqref{eq:FoSLSMinN},
the deep LSQ setting is to minimize
the loss function generated by the LSQ residual over 
realizations of NNs from a suitable set of admissible NNs.

%
\subsection{Neural Network Definitions}
\label{sec:nndef}
In order to present the deepLSQ method for the FoSLS formulations
in the previous section,
we recapitulate basic terminology and NN definitions from \cite{LODSZ22_991}.
\begin{definition}[{{\cite[Section 2.1]{LODSZ22_991}}}]
\label{def:nn} For $d,L\in\N$, a \emph{neural network $\Phi$} 
  with input dimension $d \geq 1$ and depth (i.e., number of layers) $L\geq 1$, 
  comprises a finite collection of activation
  functions\footnote{No activation is applied in the output layer $L$.
  We introduce $\varrho_{L}$ only for consistency of notation,
  and define it to be equal to the identity function.}
  $\bsvarrho =
  \{\varrho_{\ell}\}_{\ell=1}^L$
  and a finite sequence of matrix-vector tuples, i.e.
  \begin{align*}
    \Phi = ((A_1,b_1,\varrho_1),(A_2,b_2,\varrho_2),\ldots,(A_L,b_L,\varrho_L)).
  \end{align*}

  For $N_0 := d$ and \emph{numbers of neurons $N_1,\ldots,N_L\in\N$
    per layer}, for all $\ell=1,\ldots, L$ it holds that
  $A_\ell\in\bbR^{N_\ell \times N_{\ell-1} }$ and
  $b_\ell\in\bbR^{N_\ell}$, and that $\varrho_\ell$ is a list of
  length $N_\ell$ of \emph{activation functions}
  $(\varrho_\ell)_i: \R\to\R$, $i=1,\ldots,N_\ell$, acting on node $i$
  in layer $\ell$.

  The \emph{realization} of $\Phi$ is the function
  \begin{align*}
    \realiz{\Phi}: \R^d\to\R^{N_L} : x \to x_L,
  \end{align*}
  where
  \[
    x_0  := x, \;\; 
    x_\ell := \varrho_\ell( A_\ell x_{\ell-1} + b_\ell ),
             \qquad\text{ for } \ell=1,\ldots,L.
  \]
  Here, for
  $\ell=1,\ldots,L$, 
  the activation function $\varrho_\ell$ is applied component\-wise:
  for $y = (y_1,\ldots,y_{N_\ell})\in\bbR^{N_\ell}$ we
  denote
  $\varrho_\ell(y) = ( (\varrho_\ell)_1(y_1), \ldots,
  (\varrho_\ell)_{N_\ell}(y_{N_\ell}) )$, i.e., $(\varrho_\ell)_i$ 
is the activation function applied in position $i$ of layer $\ell$.

  We call the layers indexed by $\ell=1,\ldots,L-1$ \emph{hidden
    layers}, in those layers activation functions are applied. 
  We fix the activation function in the last layer of the NN to be the identity, 
  i.e., $\varrho_L := \Id_{\R^{N_L}}$.

  We refer to $\depth(\Phi) := L$ as the \emph{depth} of $\Phi$.  
  For
  $\ell=1,\ldots,L$ we denote by
  $\size_\ell(\Phi) := \norm[0]{A_\ell} + \norm[0]{b_\ell} $ the
  \emph{size of layer $\ell$}, which is the number of nonzero
  components in the weight matrix $A_\ell$ and the bias vector
  $b_\ell$, and call $\size(\Phi) := \sum_{\ell=1}^L \size_\ell(\Phi)$
  the \emph{size} of $\Phi$.
  Furthermore,
  we call $d$ and $N_L$ the
  \emph{input dimension} and the \emph{output dimension}, and denote
  by $\sizefirst(\Phi) := \size_1(\Phi)$ and
  $\sizelast(\Phi) := \size_L(\Phi)$ the size of the first and the
  last layer, respectively.
\end{definition}
Our NNs will use two different activation functions. 
First, we
use the \emph{Rectified Linear Unit} (\emph{ReLU}) activation
\begin{equation}
\label{eq:reludef}
    \relu(x) = \max\{ 0, x \}.
\end{equation}
In general,
NNs which only contain ReLU activations realize continuous,
piecewise linear functions. By \emph{ReLU NNs} we refer to NNs which
only have ReLU activations, including NNs of depth $1$, which do
not have hidden layers and realize affine transformations.
Second, 
for the emulation of discontinuous functions, we in addition
use the \emph{Binary Step Unit} (\emph{BiSU}) activation
\begin{align}
  \label{eq:heavidef}
  \heavi(x) =
  \begin{cases}
    0 &\text{if } x\le0,
    \\
    1 &\text{if } x>0,
  \end{cases}
\end{align}
which is also called \emph{Heaviside} function.  \emph{BiSU NNs} are
defined analogously to ReLU NNs.
%

In the following sections, 
we construct NNs from smaller NNs
using a \emph{calculus of NNs}, elements of
which we now recall from \cite{PV2018}.
The results cited from \cite{PV2018} were derived for
NNs which only use the ReLU activation function, but they also hold
for NNs with multiple activation functions without modification.
\begin{proposition}[Parallelization of NNs {{\cite[Definition
      2.7]{PV2018}}}]
  \label{prop:parallel}
  For $d,L\in\N$ let 
\[
\Phi^1 = 
  \left
    ((A^{(1)}_1,b^{(1)}_1,\varrho^{(1)}_1),\ldots,(A^{(1)}_L,b^{(1)}_L,\varrho^{(1)}_L)\right) 
\; \mbox{ and } \;
  \Phi^2 = \left
    ((A^{(2)}_1,b^{(2)}_1,\varrho^{(2)}_1),\ldots,(A^{(2)}_L,b^{(2)}_L,\varrho^{(2)}_L)\right
  ) 
\]
be two NNs with input dimension $d$ and depth $L$.  
Let the
\emph{parallelization} $\Parallel{\Phi^1,\Phi^2}$ of $\Phi^1$ and $\Phi^2$ 
be defined by
  \begin{align*}
    \Parallel{\Phi^1,\Phi^2} := &\, ((A_1,b_1,\varrho_1),\ldots,(A_L,b_L,\varrho_L)),
    &&
    \\
    A_1 = &\, \begin{pmatrix} A^{(1)}_1 \\ A^{(2)}_1 \end{pmatrix},
    \quad
    A_\ell = \begin{pmatrix} A^{(1)}_\ell & 0\\0&A^{(2)}_\ell \end{pmatrix},
    &&
       \text{ for } \ell = 2,\ldots L,
    \\
    b_\ell = &\, \begin{pmatrix} b^{(1)}_\ell \\ b^{(2)}_\ell \end{pmatrix},
    \quad
    \varrho_\ell = \begin{pmatrix} \varrho^{(1)}_\ell \\ \varrho^{(2)}_\ell \end{pmatrix},
                                &&
                                   \text{ for } \ell = 1,\ldots L.
  \end{align*}

  Then,
  \begin{align*}
    \realiz{\Parallel{\Phi^1,\Phi^2}} (x)
    = &\, ( \realiz{\Phi^1}(x), \realiz{\Phi^2}(x) ),
        \quad
        \text{ for all } x\in\R^d,
    \\
    \depth(\Parallel{\Phi^1,\Phi^2}) = L, &
                                            \qquad
                                            \size(\Parallel{\Phi^1,\Phi^2}) = \size(\Phi^1) + \size(\Phi^2)
                                                  .
  \end{align*}

\end{proposition}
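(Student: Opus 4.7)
The plan is to verify all three conclusions (realization identity, depth, and size) by unwinding the definition of the parallelized network and performing a short induction on the layer index. The construction is designed precisely so that the two subnetworks evolve on disjoint coordinate blocks after the first layer, while sharing the input $x$ at layer $1$; consequently, the proof will essentially amount to observing that the block structure is preserved by forward propagation.

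First I would establish the realization identity. Writing $x_\ell$ for the layer-$\ell$ output of $\Parallel{\Phi^1,\Phi^2}$ and $x^{(j)}_\ell$ for that of $\Phi^j$ on the same input $x\in\R^d$, I would prove by induction on $\ell\in\{1,\dots,L\}$ that $x_\ell = (x^{(1)}_\ell, x^{(2)}_\ell)$. The base case $\ell=1$ uses that
\[
A_1 x + b_1 = \begin{pmatrix} A^{(1)}_1 \\ A^{(2)}_1 \end{pmatrix} x + \begin{pmatrix} b^{(1)}_1 \\ b^{(2)}_1 \end{pmatrix} = \begin{pmatrix} A^{(1)}_1 x + b^{(1)}_1 \\ A^{(2)}_1 x + b^{(2)}_1 \end{pmatrix},
\]
and componentwise application of the stacked activation list $\varrho_1 = (\varrho^{(1)}_1, \varrho^{(2)}_1)$ yields $x_1 = (x^{(1)}_1, x^{(2)}_1)$. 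For $\ell\geq 2$, the block-diagonal form of $A_\ell$ decouples the two sub-vectors, so the inductive hypothesis together with componentwise application of the stacked activation list gives $x_\ell = (x^{(1)}_\ell, x^{(2)}_\ell)$. Setting $\ell = L$, and using that $\varrho_L$ is the identity on both factors, yields $\realiz{\Parallel{\Phi^1,\Phi^2}}(x) = (\realiz{\Phi^1}(x), \realiz{\Phi^2}(x))$.

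Next, the depth claim is immediate from Definition 3.1: $\Parallel{\Phi^1,\Phi^2}$ is defined as a tuple of exactly $L$ matrix-vector-activation triples, so $\depth(\Parallel{\Phi^1,\Phi^2}) = L$. For the size, I would note that for $\ell = 1$ the weight matrix $A_1$ is the vertical stack of $A^{(1)}_1$ and $A^{(2)}_1$, so $\|A_1\|_0 = \|A^{(1)}_1\|_0 + \|A^{(2)}_1\|_0$, and similarly for the stacked bias $b_1$. For $\ell \geq 2$, the block-diagonal $A_\ell$ has the nonzero patterns of $A^{(1)}_\ell$ and $A^{(2)}_\ell$ in disjoint off-diagonal-free blocks, yielding $\|A_\ell\|_0 = \|A^{(1)}_\ell\|_0 + \|A^{(2)}_\ell\|_0$, and again the same holds for biases. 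Summing $\size_\ell$ over $\ell$ gives $\size(\Parallel{\Phi^1,\Phi^2}) = \size(\Phi^1) + \size(\Phi^2)$.

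There is no real obstacle here; the only point requiring a brief comment is the asymmetry between the first layer (stacking) and subsequent layers (block-diagonal), which reflects the fact that $\Phi^1$ and $\Phi^2$ share a common input but evolve independently thereafter. This also explains why the two networks must have a common input dimension $d$ and a common depth $L$ for the construction to make sense as stated.
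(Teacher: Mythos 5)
Your proof is correct and follows the standard direct verification: the paper itself gives no proof but cites \cite[Definition 2.7]{PV2018}, where the argument is exactly this layerwise induction exploiting the stacked first layer and block-diagonal subsequent layers, together with the observation that the nonzero entries of the combined weight matrices and biases are precisely the disjoint union of those of the two subnetworks. Nothing is missing.
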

The parallelization of more than two NNs is handled by repeated
application of Proposition \ref{prop:parallel}.
Similarly, we can emulate sums of realizations of NNs.

\begin{proposition}[Sum of NNs]
\label{prop:sum}
For $d,N,L\in \N$ let
\[
\Phi^1 = 
  \left
    ((A^{(1)}_1,b^{(1)}_1,\varrho^{(1)}_1),\ldots,(A^{(1)}_L,b^{(1)}_L,\varrho^{(1)}_L)\right) 
\; \mbox{ and } \;
  \Phi^2 = \left
    ((A^{(2)}_1,b^{(2)}_1,\varrho^{(2)}_1),\ldots,(A^{(2)}_L,b^{(2)}_L,\varrho^{(2)}_L)\right
  ) 
\]
be two NNs with input dimension $d$, output dimension $ N $ and
depth $L$.  Let the \emph{sum} $\Phi^1+\Phi^2$ of $\Phi^1$ and
$\Phi^2$ be defined by
\begin{align*}
\Phi^1 + \Phi^2 := &\, ((A_1,b_1,\varrho_1),\ldots,(A_L,b_L,\varrho_L)),
&&
\\
A_1 =& \begin{pmatrix} A^{(1)}_1 \\ A^{(2)}_1 \end{pmatrix},
\quad
b_1 = \begin{pmatrix} b^{(1)}_1 \\ b^{(2)}_1 \end{pmatrix},
\quad
\varrho_1 = \begin{pmatrix} \varrho^{(1)}_1 \\ \varrho^{(2)}_1 \end{pmatrix},
\\
A_\ell =& \begin{pmatrix} A^{(1)}_\ell & 0\\0&A^{(2)}_\ell \end{pmatrix},
\quad
b_\ell =  \begin{pmatrix} b^{(1)}_\ell \\ b^{(2)}_\ell \end{pmatrix},
\quad
\varrho_\ell = \begin{pmatrix} \varrho^{(1)}_\ell \\ \varrho^{(2)}_\ell \end{pmatrix},
&&
\text{ for } \ell = 2,\ldots L-1.
\\
A_L =& \begin{pmatrix} A^{(1)}_L & A^{(2)}_L \end{pmatrix},
\quad
b_L = b^{(1)}_L + b^{(2)}_L, 
\quad
\varrho_L = \Id_{\R^N}.
\end{align*}

Then,
\begin{align*}
\realiz{\Phi^1+\Phi^2} (x)
= &\, \realiz{\Phi^1}(x) + \realiz{\Phi^2}(x),
\quad
\text{ for all } x\in\R^d,
\\
\depth(\Phi^1+\Phi^2) = &\, L, \qquad
\size(\Phi^1+\Phi^2)
\leq \size(\Phi^1) + \size(\Phi^2) 
.
\end{align*}
\end{proposition}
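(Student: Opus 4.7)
The plan is to prove the three claims by direct verification, essentially observing that $\Phi^1 + \Phi^2$ is the parallelization of $\Phi^1$ and $\Phi^2$ through layers $1,\ldots,L-1$, followed by a linear combination in the output layer that absorbs the summation into the last affine map.

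First, I would show by induction on $\ell = 1, \ldots, L-1$ that the pre-activation and post-activation values in the $\ell$-th hidden layer of $\Phi^1 + \Phi^2$ are exactly $(x^{(1)}_\ell, x^{(2)}_\ell)^\top$, where $x^{(i)}_\ell$ denotes the $\ell$-th layer output of $\Phi^i$ on input $x$. For $\ell=1$, the block structure of $A_1$ and $b_1$ together with the stacked activation $\varrho_1 = (\varrho^{(1)}_1, \varrho^{(2)}_1)^\top$ gives $x_1 = (\varrho^{(1)}_1(A^{(1)}_1 x + b^{(1)}_1), \varrho^{(2)}_1(A^{(2)}_1 x + b^{(2)}_1))^\top = (x^{(1)}_1, x^{(2)}_1)^\top$. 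The inductive step uses the block-diagonal structure of $A_\ell$ for $\ell = 2,\ldots,L-1$, exactly as in Proposition~\ref{prop:parallel}.

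Next, for the output layer, the key observation is that $A_L = (A^{(1)}_L \; A^{(2)}_L)$ is a horizontal concatenation, so
\begin{equation*}
A_L \begin{pmatrix} x^{(1)}_{L-1} \\ x^{(2)}_{L-1} \end{pmatrix} + b_L
= A^{(1)}_L x^{(1)}_{L-1} + A^{(2)}_L x^{(2)}_{L-1} + b^{(1)}_L + b^{(2)}_L
= \realiz{\Phi^1}(x) + \realiz{\Phi^2}(x),
\end{equation*}
using $\varrho_L = \Id_{\R^N}$. This yields the realization identity. The depth claim is immediate from the construction, as the number of weight tuples is unchanged.

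For the size bound, the hidden layers $\ell = 1,\ldots,L-1$ contribute $\|A^{(1)}_\ell\|_0 + \|A^{(2)}_\ell\|_0 + \|b^{(1)}_\ell\|_0 + \|b^{(2)}_\ell\|_0$ nonzero entries each by the block structure (off-diagonal blocks are identically zero and hence do not count). For the output layer, $\|A_L\|_0 = \|A^{(1)}_L\|_0 + \|A^{(2)}_L\|_0$ exactly, while $\|b^{(1)}_L + b^{(2)}_L\|_0 \leq \|b^{(1)}_L\|_0 + \|b^{(2)}_L\|_0$ by subadditivity of the support (with possible strict inequality from cancellation). Summing over all layers gives $\size(\Phi^1 + \Phi^2) \leq \size(\Phi^1) + \size(\Phi^2)$, which explains why the claim is an inequality rather than an equality as in Proposition~\ref{prop:parallel}. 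I do not anticipate any real obstacle; the only point requiring a little care is tracking the $\|\cdot\|_0$-subadditivity in the biases of the output layer to justify the inequality.
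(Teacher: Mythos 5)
Your proof is correct and is precisely the routine verification intended here: the paper states this proposition without an explicit proof, treating it as part of the NN calculus alongside Proposition \ref{prop:parallel}, and your layer-by-layer induction through the hidden layers followed by the output-layer computation is that argument. Your remark that the size bound is an inequality solely because of possible cancellation in $b^{(1)}_L + b^{(2)}_L$ (all other layers contribute exactly additively) is exactly the right justification.
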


Finally, we define the concatenation of two NNs.
\begin{definition}[Concatenation of NNs {{\cite[Definition 2.2]{PV2018}}}]
\label{def:pvconc}
For $L^{(1)}, L^{(2)} \in\N$, let 
\[
\Phi^1 = 
  \left
    ((A^{(1)}_1,b^{(1)}_1,\varrho^{(1)}_1),\ldots,(A^{(1)}_L,b^{(1)}_L,\varrho^{(1)}_L)\right) 
\; \mbox{ and } \;
  \Phi^2 = \left
    ((A^{(2)}_1,b^{(2)}_1,\varrho^{(2)}_1),\ldots,(A^{(2)}_L,b^{(2)}_L,\varrho^{(2)}_L)\right
  ) 
\]
be two NNs 
such that the input dimension of $\Phi^1$, 
which we will denote by $k$, 
equals the output dimension of $\Phi^2$.
Then, the \emph{concatenation} of $\Phi^1$ and $\Phi^2$ 
is the NN of depth $L := L^{(1)} + L^{(2)} -1$ defined as
\begin{align*}
    \Phi^1 \bullet \Phi^2 := &\, ((A_1,b_1,\varrho_1),\ldots,(A_L,b_L,\varrho_L)),
    \\
    (A_\ell,b_\ell,\varrho_\ell) = &\, (A^{(2)}_\ell,b^{(2)}_\ell,\varrho^{(2)}_\ell),
	\qquad
    \text{ for }\ell=1,\ldots,L^{(2)}-1,
    \\
    A_{L^{(2)}} = &\, A^{(1)}_1 A^{(2)}_{L^{(2)}},
    \qquad
    b_{L^{(2)}} = A^{(1)}_1 b^{(2)}_{L^{(2)}} + b^{(1)}_1,
    \qquad
    \varrho_{L^{(2)}} = \varrho^{(1)}_1,
    \\
    (A_\ell,b_\ell,\varrho_\ell) 
    	= &\, (A^{(1)}_{\ell-L^{(2)}+1},b^{(1)}_{\ell-L^{(2)}+1}, \varrho^{(1)}_{\ell-L^{(2)}+1}),
	\qquad
	\text{ for }\ell=L^{(2)}+1,\ldots,L^{(1)}+L^{(2)}-1.
\end{align*}
\end{definition}
It follows immediately from this definition that
\[
\realiz{ \Phi^1 \bullet \Phi^2 } = \realiz{ \Phi^1 } \circ \realiz{ \Phi^2 }.
\]

\subsection{NN Emulations of Lowest Order Conforming FE Spaces.}
\label{sec:approximation}

In order to state the LSQ-G problem in terms of NNs,
we recall from \cite{LODSZ22_991}
\emph{exact NN emulations} 
of each of the lowest order FE spaces in the de Rham complex,
on regular,
simplical partitions $\calT$ of polytopal domains $\domain \subset \bbR^d$.


For $\blacklozenge \in \{ \So, \Ne, \RT, \Sz \}$,
we have by \cite[Section 5]{LODSZ22_991} a vector space of NNs
\[
\calNN(\blacklozenge;\calT,\domain) 
	= \{ \Phi^{\blacklozenge,v} : v\in\blacklozenge(\calT,\domain) \}
\]
such that 
%
\[
\forall v \in \blacklozenge(\calT,\domain): \quad 
\realiz{\Phi^{\blacklozenge,v}} = v \quad\mbox{a.e. in} \;\; \domain\;.
\]
\begin{proposition}[{{\cite[Proposition 5.1]{LODSZ22_991}}}]
\label{prop:basisnet}
Let $\domain\subset\R^d$, $d\geq2$, be a bounded, polytopal domain.
For every regular, simplicial triangulation $\calT$ of $\domain$
and every $\blacklozenge \in \{ \So, \Ne, \RT, \Sz \}$
(with the N\'{e}d\'{e}lec space $\blacklozenge = \Ne$ excluded if $d > 3$),
there exists a NN $\Phi^\blacklozenge :=\Phi^{\blacklozenge(\calT,\domain)}$
with ReLU and BiSU activations,
which in parallel emulates the shape functions $\{ \theta^\blacklozenge_i \}_{i\in \calI}$
for $\calI \in \{ \calV, \calE, \calF, \calT \}$, respectively,
that is $ \realiz{\Phi^\blacklozenge}\colon \domain \to \R^{|\calI|} $ satisfies
\begin{equation*}
    \realiz{\Phi^\blacklozenge}_i(x)
    = \, \theta^\blacklozenge_i(x)
    \quad\text{ for a.e. } x\in\domain
    \text{ and all } i\in\calI.
\end{equation*}
There exists $C>0$ independent of $d$ and $\calT$ such that
\begin{align*}
\depth(\Phi^\blacklozenge)
	= &\, \begin{cases} 5 & \text{ if } \blacklozenge \in \{ \So, \Ne, \RT \},
		\\ 3 &  \text{ if } \blacklozenge = \Sz, \end{cases}
	\\
    \size(\Phi^\blacklozenge) \le
  & C d^2 \sum_{i\in \calI} s(i) \le C d^2\mathfrak{s}(\calI) \dim( \blacklozenge(\calT,\domain)).
\end{align*}

For $\blacklozenge \in \{ \So, \Ne, \RT, \Sz \}$
and for every FE function
$v = \sum_{i\in\calI} v_i \theta^\blacklozenge_i \in\blacklozenge(\calT,\domain)$,
there exists a NN
$\Phi^{\blacklozenge,v} := \Phi^{\blacklozenge(\calT,\domain),v}$
with ReLU and BiSU activations,
such that for a constant $C>0$ independent of $d$ and $\calT$
\begin{align*}
\realiz{\Phi^{\blacklozenge,v}}(x)
	= &\, v(x)
	\quad\text{ for a.e. } x\in\domain
	,
	\\
\depth(\Phi^{\blacklozenge,v})
	= &\, \begin{cases} 5 & \text{ if } \blacklozenge \in \{ \So, \Ne, \RT \},
		\\ 3 &  \text{ if } \blacklozenge = \Sz, \end{cases}
	\\
    \size(\Phi^{\blacklozenge,v})
    \leq
    &\, C d^2 \sum_{i\in\calI} s(i)
     \leq C d^2 \mathfrak{s}(\calI) \dim( \blacklozenge(\calT,\domain)).
\end{align*}

The layer dimensions and the lists of activation functions
of $\Phi^\blacklozenge$ and $\Phi^{\blacklozenge,v}$
are independent of $v$
and only depend on $\calT$ through
    $ \{s(i)\}_{i\in\calI} $ and $ \snorm{\calI} = \dim(\blacklozenge(\calT,\domain)) $.

For each $\blacklozenge \in \{ \So, \Ne, \RT, \Sz \}$,
the set 
\begin{equation}\label{eq:NNFEMDef}
\calNN(\blacklozenge;\calT,\domain)
	:= \{ \Phi^{\blacklozenge,v} : v \in \blacklozenge(\calT,\domain) \}\;,
\end{equation}
together with the linear operation
\begin{align}
\label{eq:NNFEMlin}
\Phi^{\blacklozenge,v} \widehat{+} \lambda\Phi^{\blacklozenge,w} 
	:= &\, \Phi^{\blacklozenge,v+\lambda w},
	\qquad
	\text{ for all } v,w\in \blacklozenge(\calT,\domain) \text{ and }\lambda\in\R
\end{align}
is a vector space,
and ${\rm R} : \calNN(\blacklozenge;\calT,\domain) \to \blacklozenge(\calT,\domain)$
a linear bijection.
\end{proposition}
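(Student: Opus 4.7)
The plan is to build each NN $\Phi^\blacklozenge$ by (i) constructing a local emulation of every shape function $\theta_i^\blacklozenge$ for $i\in\calI$, (ii) arranging these in parallel, and then to obtain $\Phi^{\blacklozenge,v}$ by collapsing the output with the coefficient vector of $v$. The architectures must depend only on $\calT$ so that varying $v$ only changes the output weights; this will then give the vector-space and bijectivity claims for free.

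I would treat $\So$ first, since here only ReLU is needed. On each simplex $K\ni a_i$, the nodal hat function $\theta_i^{\So}$ restricted to $K$ is the barycentric coordinate $\lambda_i^K$, an affine function. The global function $\theta_i^{\So}$ on the star of $a_i$ admits a representation as a composition of $\min$ and $\max$ of these affine pieces (clipped against $0$ off the star), which Lemma \ref{lem:minmaxrelu} emulates exactly as a ReLU NN of depth $O(1)$ and size $O(d\cdot s(i))$. Parallelizing over vertices via Proposition \ref{prop:parallel} yields $\Phi^{\So}$ of depth $5$ and size $\leq C d^2 \sum_{i\in\calV} s(i)$. For $\Sz$, each shape function is $\mathbbm{1}_K$, so Lemma \ref{lem:indicemulation-lowdimalt} produces a BiSU NN of depth $3$ and size $O(d)$ per simplex, again parallelized to give depth $3$ and the stated size.

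For $\RT$ and $\Ne$, the shape functions are piecewise polynomial with support on the simplices sharing a given face (resp.\ edge); on each such simplex the local shape function is a vector affine in $x$. The construction is therefore: for each pair $(i,K)$ with $i\subset K$, build (a) a ReLU NN emulating the affine polynomial piece $\theta_i^\blacklozenge|_K$, (b) a BiSU indicator $\mathbbm{1}_K$ via Lemma \ref{lem:indicemulation-lowdimalt}, (c) multiply the bounded real output of (a) by the $\{0,1\}$-valued output of (b) using the exact product NN of Proposition \ref{prop:multbystep-alt}, (d) sum over $K\ni i$ with Proposition \ref{prop:sum}, and (e) parallelize over $i\in\calI$ with Proposition \ref{prop:parallel}. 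Depth bookkeeping gives $5$ in all three cases $\{\So,\Ne,\RT\}$ (two layers for the indicator, one for multiplication, plus input/output), and the size bound $\leq C d^2 \sum_{i\in\calI} s(i) \leq C d^2 \mathfrak{s}(\calI)\dim(\blacklozenge(\calT,\domain))$ follows from the $O(d)$ cost of each of (a)--(c) together with the $O(d)$ affine constraints needed to describe a $d$-simplex.

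For the second half, given $v=\sum_i v_i\theta_i^\blacklozenge$, I define $\Phi^{\blacklozenge,v}$ by retaining all hidden layers of $\Phi^\blacklozenge$ and replacing the output weights with the row vector $(v_i)_{i\in\calI}$; by linearity of the last layer, $\realiz{\Phi^{\blacklozenge,v}}=\sum_i v_i \theta_i^\blacklozenge = v$. Depth and size are unchanged up to constants. The vector-space structure with the operation $\widehat{+}$ then holds by inspection: under the identification of $\Phi^{\blacklozenge,v}$ with its output-weight vector, $\widehat{+}$ is vector addition/scaling, and $\mathrm{R}$ is the evaluation map $(v_i)\mapsto \sum_i v_i\theta_i^\blacklozenge$, which is a linear bijection onto $\blacklozenge(\calT,\domain)$ because $\{\theta_i^\blacklozenge\}_{i\in\calI}$ is a basis. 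The main obstacle I foresee is step (a) for $\Ne$ and $\RT$ in general dimension: the local shape functions are $x\mapsto \alpha(x-a_i)$-type affine vector fields whose exact ReLU emulation is trivial (affine = depth-$1$ ReLU NN), so this is really only a careful bookkeeping exercise; the genuinely nontrivial tool is the exact BiSU-based indicator/multiplication package from Lemma \ref{lem:indicemulation-lowdimalt} and Proposition \ref{prop:multbystep-alt}, which is precisely what makes the emulation \emph{exact} rather than merely approximate.
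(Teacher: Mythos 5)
Your construction for $\Ne$, $\RT$ and $\Sz$ — per element, an affine (vector-valued) piece, a BiSU indicator of the simplex, exact multiplication of a bounded value by a $\{0,1\}$ value, summation over the patch of the subsimplex, then parallelization and, for $\Phi^{\blacklozenge,v}$, reuse of the hidden layers with the coefficient vector placed in the output weights — is exactly the construction in the cited reference, and your argument for the vector-space structure and the bijectivity of ${\rm R}$ (identify $\Phi^{\blacklozenge,v}$ with its output-weight vector; $\{\theta^\blacklozenge_i\}_{i\in\calI}$ is a basis) is the intended one. The only point you leave implicit there is that the indicator-based pieces agree with the shape functions only up to the skeleton of $\calT$, which is why the statement asserts equality for a.e.\ $x\in\domain$ only; this is harmless.

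The genuine gap is your treatment of $\So$. The proposition claims a network with ReLU \emph{and} BiSU activations of depth exactly $5$, with layer dimensions depending on $\calT$ only through $\{s(i)\}_{i\in\calI}$ and $\snorm{\calI}$. The min/max route you propose cannot deliver this: a max/min tree over the $s(i)$ affine pieces has depth growing like $\log_2(s(i))$ (and, after the further reductions needed in $d$ dimensions, like $\log_2(d+1)$), so the depth is mesh-dependent and in general larger than $5$; moreover, parallelizing hat-function subnetworks of different depths via Proposition \ref{prop:parallel} requires padding by identity networks, which again destroys the fixed depth. In addition, the simple formula $\theta^{\So}_i=\max\{0,\min_{K\ni a_i}\lambda_i^K\}$ is valid only when the vertex star is convex; for general regular triangulations a more involved lattice (max--min) representation is required. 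This ReLU-only construction is precisely the content of the separate Proposition \ref{prop:CPLbasisnet}, whose depth bound $8+\log_2(\mathfrak{s}(\calI))+\log_2(d+1)$ reflects exactly these effects. To prove the statement as written you should treat $\So$ by the same recipe as $\RT$ and $\Ne$: emulate on each $K\ni a_i$ the barycentric affine piece, multiply it exactly by the BiSU indicator of $K$, and sum over the patch; this yields the claimed uniform depth $5$ and the size bound $Cd^2\sum_{i\in\calI}s(i)$, at the price of using BiSU also for $\So$.
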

For a proof, we refer to \cite[Section 5]{LODSZ22_991}.

\begin{remark}[{{\cite[Remark 5.2]{LODSZ22_991}}}]
Note that $\sum_{i \in \calI} s(i) \leq c(\calI,d) \snorm\calT$,
where $c(\calV,d) = d+1$ is the number of vertices of a
$d$-simplex, $c(\calE,d)$ the number of edges of a $d$-simplex,
$c(\calF,d)$ the number of faces of a $d$-simplex and
$C(\calT,d) = 1$.  We obtain this inequality by observing that
each element $K \in \calT$ contributes $+1$ to $c(\calI,d)$
terms $s(i)$.  Therefore, we also have the bound
$ \size(\Phi^{\blacklozenge}) \leq Cd^2 c(\calI,d)
\snorm\calT$.
The same bound holds for $\size(\Phi^{\blacklozenge,v})$.
\end{remark}

\begin{definition}[{{\cite[Definition 5.3]{LODSZ22_991}}}]
\label{def:basisnet}
For given polytopal $ \domain \subset \R^d $, $ d\ge2 $
and a regular, simplicial triangulation $ \calT $ on $ \domain $,
we call the NN $ \Phi^{\blacklozenge} $
defined in Proposition \ref{prop:basisnet} a \emph{$\blacklozenge$-basis NN}.
\end{definition}

The following analogue of Proposition \ref{prop:basisnet} for ReLU emulation of $\So$ also holds.
Here, the dimension $d\geq 2$ of the physical domain $\domain$ is arbitrary, 
and the NN size parameters are explicit in $d$.

\begin{proposition}[{{\cite[Proposition 5.7]{LODSZ22_991}}}]
  \label{prop:CPLbasisnet}
  Let $\domain\subset\R^d$, $d\geq2$, be a bounded, polytopal domain.
  For every regular, simplicial triangulation $\calT$ of $\domain$,
  there exists a NN $\Phi^{CPwL} :=\Phi^{CPwL(\calT,\domain)}$ with
  only ReLU activations, which in parallel emulates the shape
  functions $\{ \theta^{\So}_i \}_{i\in \calI}$ for $\calI = \calV$.
  That is, $ \realiz{\Phi^{CPwL}}\colon \domain \to \R^{|\calI|} $
  satisfies
  \begin{equation*}
    \realiz{\Phi^{CPwL}}_i(x)
    = \, \theta^{\So}_i(x)
    \quad\text{ for all } x\in\domain
    \text{ and all } i\in\calI.
  \end{equation*}
  There exists $C>0$ independent of $d$ and $\calT$ such that
  \begin{align*}
    \depth(\Phi^{CPwL})
    \leq &\, 8 + \log_2( \mathfrak{s}(\calI) ) + \log_2(d+1)
           ,
    \\
    \size(\Phi^{CPwL})
    \leq & C \snorm{\calI} \log_2( \mathfrak{s}(\calI) ) + C d^2 \sum_{i\in\calI} s(i)
           \leq C d^2 \mathfrak{s}(\calI) \dim( \So(\calT,\domain)).
  \end{align*}

  For all
  $v = \sum_{i\in\calI} v_i \theta^{\So}_i \in\So(\calT,\domain)$,
  there exists a NN $\Phi^{CPwL,v} := \Phi^{CPwL(\calT,\domain),v} $
  with only ReLU activations, such that for a constant $C>0$
  independent of $d$ and $\calT$
  \begin{align*}
    \realiz{\Phi^{CPwL,v}}(x)
    = &\, v(x)
	\quad\text{ for all } x\in\domain
	,
    \\
    \depth(\Phi^{CPwL,v})
    \leq &\, 8 + \log_2( \mathfrak{s}(\calI) ) + \log_2(d+1)
           ,
    \\
    \size(\Phi^{CPwL,v})
    \leq &\, C \snorm{\calI} \log_2( \mathfrak{s}(\calI) ) + C d^2 \sum_{i\in\calI} s(i)
           \leq C d^2 \mathfrak{s}(\calI) \dim( \So(\calT,\domain)).
  \end{align*}

  The layer dimensions and the lists of activation functions of
  $\Phi^{CPwL}$ and $\Phi^{CPwL,v}$ are independent of $v$ and only
  depend on $\calT$ through $ \{s(i)\}_{i\in\calI} $ and
  $ \snorm{\calI} = \dim(\So(\calT,\domain)) $.

  The set
  $\calNN(CPwL;\calT,\domain) := \{ \Phi^{CPwL,v} : v \in
  \So(\calT,\domain) \}$ together with the linear operation
  $\Phi^{CPwL,v} \widehat{+} \lambda\Phi^{CPwL,w} := \Phi^{CPwL,v+\lambda w}$
  for all $v,w\in \So(\calT,\domain)$ and all $\lambda\in\R$ is a
  vector space.
  The realization map
  ${\rm R} : \calNN(CPwL;\calT,\domain) \to \So(\calT,\domain)$
  is a linear bijection.
\end{proposition}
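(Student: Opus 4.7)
The plan is to construct, for each vertex $v\in\calV$, a ReLU NN $\Phi^{CPwL,v}$ whose realization is the nodal hat function $\theta_v^{\So}$, and then to obtain $\Phi^{CPwL}$ by parallelizing these using Proposition~\ref{prop:parallel}. As in the proof of Proposition~\ref{prop:basisnet}, arranging the construction so that the linear combination determining a general $v=\sum_{i\in\calV}v_i\theta_i^{\So}$ affects only the output-layer weights and biases yields the vector-space structure on $\calNN(CPwL;\calT,\domain)$ and the linearity and bijectivity of $\mathrm{R}$; so the main task is to construct a single $\Phi^{CPwL,v}$ of the claimed depth and size.

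For a single vertex $v$, let $K_1,\dots,K_{s(v)}$ be the simplices of $\calT$ containing $v$, and let $\lambda^{K_j}_w$ denote the barycentric coordinate of $x\in\R^d$ with respect to a vertex $w$ of $K_j$, extended affinely from $K_j$ to all of $\R^d$. The central identity I would establish is
\begin{equation*}
\theta_v^{\So}(x)
=
\rho\!\left(\max_{1\le j\le s(v)}\Bigl[\lambda^{K_j}_v(x) - M_v\cdot\max_{w\in K_j,\,w\neq v}\rho\bigl(-\lambda^{K_j}_w(x)\bigr)\Bigr]\right),
\end{equation*}
for a penalty $M_v>0$ chosen large enough in terms of the local mesh geometry. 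The verification is case-wise: if $x\in K_j$, all $\lambda^{K_j}_w(x)\ge 0$ so the inner penalty vanishes and the bracket equals $\lambda^{K_j}_v(x)\in[0,1]$; if $x\notin K_j$, then $\lambda^{K_j}_w(x)<0$ for some $w$ and, for $M_v$ large enough, the bracket becomes non-positive. Hence the outer $\max$ selects the simplex containing $x$ (if any) and the final $\rho$ zeros the expression outside $\bigcup_j K_j$, recovering $\theta_v^{\So}$.

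To realize this identity as a ReLU NN I would compose: a first affine layer producing the $(d+1)s(v)$ values $\lambda^{K_j}_w(x)$ using $O(d^2 s(v))$ weights; for each $j$, a ReLU-$\max$ subnetwork over the $d$ inputs $\{\rho(-\lambda^{K_j}_w)\}_{w\ne v}$, of depth $2+\log_2(d+1)$ and size $O(d)$, given by the classical $\min$/$\max$ ReLU emulation from \cite{HLXZ2020}; an affine step forming the bracketed expression; an outer ReLU-$\max$ subnetwork over the $s(v)$ simplices, of depth $2+\log_2(s(v))$ and size $O(s(v))$; and a final $\rho$. Using $s(v)\le \mathfrak{s}(\calI)$ and adding $O(1)$ identity-bypass layers to synchronize branches of different depth, via the calculus recalled in Section~\ref{sec:nndef}, gives $\depth(\Phi^{CPwL,v})\le 8+\log_2(\mathfrak{s}(\calI))+\log_2(d+1)$; summing sizes over $v\in\calV$ then yields the claimed bound on $\size(\Phi^{CPwL})$. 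The hard part will be the uniform choice of the penalty $M_v$: the affine extension of $\lambda^{K_j}_w$ outside $K_j$ grows at a rate controlled only by the inverse diameter of $K_j$, so $M_v$ must be scaled with local diameters and the shape-regularity parameter of the patch of $v$. This affects only the magnitude of the weights in the outer affine step and not the sparsity pattern, so the stated depth and size counts, as well as the independence of the hidden-layer topology from the choice of output coefficients, are preserved.
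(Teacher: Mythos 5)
The paper itself gives no proof of this statement (it is quoted verbatim from \cite[Proposition 5.7]{LODSZ22_991}), so your construction has to stand on its own. The architecture and the depth/size bookkeeping are plausible and consistent with the stated bounds, but the verification of your central identity contains a genuine gap. Your case analysis claims that for $x\notin K_j$ the bracket $B_j(x):=\lambda^{K_j}_v(x)-M_v\max_{w\neq v}\rho\bigl(-\lambda^{K_j}_w(x)\bigr)$ becomes non-positive once $M_v$ is large enough. This is false for every fixed finite $M_v$: take $x$ in a neighbouring patch simplex $K_{j'}$ that shares with $K_j$ a face containing $v$, with $x$ close to $v$. Then $\lambda^{K_j}_v(x)$ is close to $1$, while $\max_{w\neq v}\rho(-\lambda^{K_j}_w(x))\le C\,|x-v|/h_j$ is arbitrarily small, so $B_j(x)$ stays close to $1$. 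Hence the outer maximum does not automatically ``select the simplex containing $x$'', and the pointwise identity you base the whole construction on is not established by the argument you give.

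What is actually needed is the domination $B_j(x)\le\theta^{\So}_v(x)$ for all $x$ in the patch $\omega_v:=\bigcup_{K\ni v}K$ with $x\notin K_j$, together with $B_j(x)\le 0$ for $x\in\domain\setminus\omega_v$. For a face neighbour $K_{j'}$ this can be saved: $\lambda^{K_j}_v-\lambda^{K_{j'}}_v=c\,\lambda^{K_j}_{w_0}$ with $w_0$ the vertex of $K_j$ opposite the shared face, so $M_v\ge|c|$ suffices on $K_{j'}$. But for patch simplices meeting $K_j$ only in a lower-dimensional subsimplex through $v$, and for points outside the patch, you must bound the ratio of $\lambda^{K_j}_v(x)-\theta^{\So}_v(x)$ (respectively $\lambda^{K_j}_v(x)$) by the penalty $\max_{w\neq v}\rho(-\lambda^{K_j}_w(x))$, and this is only possible because points where that penalty is small relative to these quantities are forced, by the regularity of the simplicial fan at $v$, to lie in $K_j$ or in its face neighbours; making this quantitative requires an angle/shape-regularity argument over the whole patch and its adjacent elements (not just over $K_j$), which your proposal does not supply. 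Until such a finite $M_v$ is exhibited, the identity — and with it the exactness claim ``$\realiz{\Phi^{CPwL,v}}=v$ for all $x\in\domain$'' — is unproven. Note that the easy classical formula $\theta^{\So}_v=\rho\bigl(\min_{K\ni v}\lambda^{K}_v\bigr)$ covers only convex vertex patches; the non-convex patch case is exactly the difficulty your case split glosses over, and it is the reason the cited reference uses a more careful construction.
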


\subsection{Deep FoSLS}
\label{sec:deepFoSLS}
By combining multiple NNs from Section \ref{sec:approximation},
we obtain emulations of finite-dimensional discretization
spaces $\bbV_\bullet(\domain)$ as in Section \ref{sec:LSQFEM}.

We slightly generalize the notation from Section \ref{sec:approximation}
to provide NN emulations of these spaces. 
In particular, we need
Cartesian products of the spaces treated in Section \ref{sec:approximation},
with some of the spaces in these cartesian product 
carrying homogeneous essential boundary conditions.
We use the general notation 
$\blacklozenge := \blacklozenge(\calT,\domain)$
and assume a Cartesian product structure
\[
\blacklozenge = \blacklozenge_1 \times \ldots \times \blacklozenge_m
\]
for some $m\in\N$ 
(with $m=1$ corresponding to a single space $\blacklozenge(\calT,\domain)$).
For each factor space 
\[
\blacklozenge_k := \blacklozenge_k(\calT,\domain) \;\; \mbox{for} \;\; k=1,\ldots,m
\]
(we assume one common triangulation $\calT$ of $\domain$ for all $\blacklozenge_k$)
we consider a separate basis of \emph{shape function features}\footnote{
See Remark \ref{rem:foslsbasisnetreuse} for an alternative construction.}
$\{ \theta^{\blacklozenge_k}_j \}_{j\in\calI_k}$
for $\calI_k \in \{ \calV,\calE,\calF,\calT \}$,
or for $\calI_k$ a subset of $\calV,\calE,\calF$ or $\calT$
in case homogeneous boundary conditions are imposed on $\blacklozenge_k$.
Each shape function of $\blacklozenge(\calT,\domain)$
is associated to one of the $m$ factor spaces.
To keep track of this,
we define $\calI := \cup_{k=1}^m \calI_k \times \{ k \}$,
whose elements are of the form $\calI \ni i = (j,k)$,
for a simplex $j \in \cup_{k=1}^m \calI_k \subset \calV\cup\calE\cup\calF\cup\calT$
and $k\in\N$.
With the definition
$\theta^{\blacklozenge}_i := \theta^{\blacklozenge_k}_j$,
for all $v\in \blacklozenge$ 
there exist coefficients $(v_i)_{i\in\calI} \subset\R$ such that
for all $k=1,\ldots,m$ the $k$'th component of $v$, 
which we denote by $v_{(k)}$, 
can be expanded as
$v_{(k)} = \sum_{ j\in\calI_k } v_{(j,k)} \theta^{\blacklozenge_k}_j
	= \sum_{i = (j,k)\in\calI} v_i \theta^{\blacklozenge}_i$,
where the sum is over all indices $i \in \calI$ whose second component equals $k$.
This generalizes the expansion 
$v = \sum_{i\in\calI} v_i \theta^{\blacklozenge}_i$
used in Section \ref{sec:approximation}
for spaces without Cartesian product structure.
For consistency of notation,
for all $i = (j,k) \in \calI$ we denote by
$s(i) := s(j) = \# \{ K \in \calT : j \subset K \}$
the number of elements of $\calT$ sharing the subsimplex $j$,
and define
$\mathfrak{s}(\calI) := \max_{i\in\calI} s(i)$.

For example, consider the spaces $\bbV_\bullet(\domain)$
defined in \eqref{eq:bbVPoisson}--\eqref{eq:bbVParab},
which we will denote by
$\bbV_\bullet^{(1)}(\domain)$, 
$\bbV_\bullet^{(2)}(\domain)$, 
$\bbV_\bullet^{(3)}(\domain)$ and
$\bbV_\bullet^{(4)}(\domain)$,
respectively.
We sometimes denote the mesh explicitly
and write
$\blacklozenge 
	= \blacklozenge(\calT,\domain) 
	= \bbV_\bullet^{(i)}(\calT,\domain)$ 
for $i\in\{1,2,3,4\}$.
We will denote the corresponding index sets by 
$\calI^{(i)}$.
For $i=1$, we have $m=2$, 
$\blacklozenge_1 = \So \cap H^1_0(\domain)$,
$\calI_1 = \calV \cap \domain$,
$\blacklozenge_2 = \RT$ and
$\calI_2 = \calF$.
Here, we impose homogeneous Dirichlet boundary conditions on $\blacklozenge_1$
by restricting $\calI_1$ to interior vertices.
For $i=2$, we have $m=2d$, 
$\blacklozenge_1 = \ldots = \blacklozenge_d = \So \cap H^1_0(\domain)$,
$\calI_1 = \ldots = \calI_d = \calV \cap \domain$,
$\blacklozenge_{d+1} = \ldots = \blacklozenge_{2d} = \RT$ 
and
$\calI_{d+1} = \ldots = \calI_{2d} = \calF$.
For $i=3$, we have $m=2$, 
$\blacklozenge_1 = \Ne \cap H_0(\curl;\domain)$,
$\calI_1 = \{ e\in \calE : e \cap D \neq \emptyset \}$,
$\blacklozenge_2 = \Ne$ and
$\calI_2 = \calE$.
For $i=4$, we have $m=d+1$, 
$\blacklozenge_1 = \So \cap L^2(I;H^1_0(\spatialdomain))$,
$\calI_1 = \calV \cap ( \overline{I} \times \spatialdomain )$,
$\blacklozenge_2 = \ldots = \blacklozenge_{d+1} = \So$ and
$\calI_2 = \ldots = \calI_{d+1} = \calV$.

\begin{proposition}
\label{prop:foslsbasisnet}
Let $\domain\subset\R^d$, $d\geq2$, be a bounded, polytopal domain.
For every regular, simplicial triangulation $\calT$ of $\domain$
and every 
$\blacklozenge \in \{ \bbV_\bullet^{(1)}, 
	\bbV_\bullet^{(2)}, \bbV_\bullet^{(3)}, \bbV_\bullet^{(4)} \}$
(with the N\'{e}d\'{e}lec space $\bbV_\bullet^{(3)}$ excluded if $d > 3$),
there exists a NN $\Phi^\blacklozenge := \Phi^{\blacklozenge(\calT,\domain)}$
with ReLU and BiSU activations,
which in parallel emulates the shape functions $\{ \theta^\blacklozenge_i \}_{i\in \calI}$
for $\calI \in \{ \calI^{(1)}, \calI^{(2)}, \calI^{(3)}, \calI^{(4)} \}$, 
respectively,
that is $ \realiz{\Phi^\blacklozenge}\colon \domain \to \R^{|\calI|} $ 
satisfies
\begin{equation*}
    \realiz{\Phi^\blacklozenge}(x)_i
    = \, \theta^\blacklozenge_i(x)
    \quad\text{ for a.e. } x\in\domain
    \text{ and all } i\in\calI.
\end{equation*}
There exists $C>0$ independent of $d$ and $\calT$ such that
\begin{align*}
\depth(\Phi^\blacklozenge)
	= &\, 5
	,
	\qquad
    \size(\Phi^\blacklozenge) \le
	C d^2 \sum_{i\in \calI} s(i) \le C d^2\mathfrak{s}(\calI) \dim( \blacklozenge(\calT,\domain)).
\end{align*}

For $\blacklozenge \in \{ \bbV_\bullet^{(1)}, 
	\bbV_\bullet^{(2)}, \bbV_\bullet^{(3)} \}$
and for every FE function satisfying
$v_{(k)} = \sum_{i = (j,k)\in\calI} v_i \theta^{\blacklozenge}_i \in\blacklozenge_k(\calT,\domain)$
for all $k=1,\ldots,m$,
there exists a NN
$\Phi^{\blacklozenge,v} := \Phi^{\blacklozenge(\calT,\domain),v}$
with ReLU and BiSU activations,
such that for a constant $C>0$ independent of $d$ and $\calT$
\begin{align*}
\realiz{\Phi^{\blacklozenge,v}}_k(x)
	= &\, v_{(k)}(x)
	\quad\text{ for a.e. } x\in\domain
	\text{ and all } k=1,\ldots,m
	,
	\\
\depth(\Phi^{\blacklozenge,v})
	= &\, 5
	,
	\qquad
    \size(\Phi^{\blacklozenge,v})
    \leq
    C d^2 \sum_{i\in\calI} s(i)
     \leq C d^2 \mathfrak{s}(\calI) \dim( \blacklozenge(\calT,\domain)).
\end{align*}

The layer dimensions and the lists of activation functions
of $\Phi^\blacklozenge$ and $\Phi^{\blacklozenge,v}$
are independent of $v$
and only depend on $\calT$ through
    $ \{s(i)\}_{i\in\calI} $ and $ \snorm{\calI} = \dim(\blacklozenge(\calT,\domain)) $.

For each $\blacklozenge \in \{ \bbV_\bullet^{(1)}, 
	\bbV_\bullet^{(2)}, \bbV_\bullet^{(3)} \}$,
the set 
\begin{equation}\label{eq:foslsNNFEMDef}
\calNN(\blacklozenge;\calT,\domain)
	:= \{ \Phi^{\blacklozenge,v} : v \in \blacklozenge(\calT,\domain) \}\;,
\end{equation}
together with the linear operation
\begin{align}
\label{eq:foslsNNFEMlin}
\Phi^{\blacklozenge,v} \widehat{+} \lambda\Phi^{\blacklozenge,w} 
	:= &\, \Phi^{\blacklozenge,v+\lambda w},
	\qquad
	\text{ for all } v,w\in \blacklozenge(\calT,\domain) \text{ and }\lambda\in\R
\end{align}
is a vector space,
and ${\rm R} : \calNN(\blacklozenge;\calT,\domain) \to \blacklozenge(\calT,\domain)$
a linear isomorphism.
\end{proposition}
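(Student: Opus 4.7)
The plan is to reduce the construction to a parallelization (in the sense of Proposition \ref{prop:parallel}) of the single-space basis NNs from Proposition \ref{prop:basisnet}, and then to derive the vector-space structure from its single-space analogue. Concretely, for each Cartesian factor $\blacklozenge_k$ of $\blacklozenge = \blacklozenge_1 \times \cdots \times \blacklozenge_m$, the space $\blacklozenge_k$ is one of $\So(\calT,\domain)$, $\Ne(\calT,\domain)$, $\RT(\calT,\domain)$, possibly intersected with a homogeneous essential boundary condition. Proposition \ref{prop:basisnet} supplies a NN emulating the full shape-function basis of the unconstrained factor space; I will trim its output to the admissible index set, parallelize across $k$, and read off the depth/size bounds.

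First I would invoke Proposition \ref{prop:basisnet} to obtain, for every $k\in\{1,\ldots,m\}$, a NN $\Phi^{\blacklozenge_k}$ of depth $5$ whose realization is the vector of all shape functions $\{\theta^{\blacklozenge_k}_j\}_{j\in\calI_k^{\mathrm{full}}}$ of the unconstrained space. To enforce the homogeneous essential boundary conditions encoded in the reduced index set $\calI_k\subset\calI_k^{\mathrm{full}}$, I would simply drop the output components indexed by $\calI_k^{\mathrm{full}}\setminus\calI_k$ by deleting the corresponding rows of the last-layer weight matrix and bias vector. This does not alter the hidden layers, preserves depth $5$, and only decreases the size. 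Call the resulting NN $\widetilde\Phi^{\blacklozenge_k}$.

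Because in each of the cases $\blacklozenge\in\{\bbV_\bullet^{(1)},\bbV_\bullet^{(2)},\bbV_\bullet^{(3)},\bbV_\bullet^{(4)}\}$ only the depth-$5$ spaces $\So$, $\Ne$, $\RT$ occur, all $\widetilde\Phi^{\blacklozenge_k}$ share the same depth. Proposition \ref{prop:parallel} (extended inductively to $m$ NNs) then yields
\[
\Phi^{\blacklozenge} := \Parallel{\widetilde\Phi^{\blacklozenge_1},\ldots,\widetilde\Phi^{\blacklozenge_m}},
\]
of depth $5$ and size at most $\sum_{k=1}^m \size(\widetilde\Phi^{\blacklozenge_k})$, which by the bounds in Proposition \ref{prop:basisnet} is bounded by $C d^2 \sum_{i\in\calI} s(i) \leq C d^2\mathfrak{s}(\calI)\dim(\blacklozenge(\calT,\domain))$, using the identification $\calI = \bigcup_{k=1}^m \calI_k\times\{k\}$. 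The parallel-output property of Proposition \ref{prop:parallel} guarantees that $\realiz{\Phi^\blacklozenge}_i = \theta^{\blacklozenge}_i$ almost everywhere for every $i\in\calI$. The construction of $\Phi^{\blacklozenge,v}$ for $\blacklozenge\in\{\bbV_\bullet^{(1)},\bbV_\bullet^{(2)},\bbV_\bullet^{(3)}\}$ is identical, substituting the weighted single-space NNs $\Phi^{\blacklozenge_k,v_{(k)}}$ from Proposition \ref{prop:basisnet} (with outputs restricted to $\calI_k$) in place of $\widetilde\Phi^{\blacklozenge_k}$ before parallelizing.

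The vector-space structure of $\calNN(\blacklozenge;\calT,\domain)$ under the operation \eqref{eq:foslsNNFEMlin}, and the bijectivity of $\mathrm{R}:\calNN(\blacklozenge;\calT,\domain)\to\blacklozenge(\calT,\domain)$, then follow componentwise from the analogous assertion in Proposition \ref{prop:basisnet} applied to each factor $\blacklozenge_k$: in the parallelized architecture the hidden layers depend only on $\calT$ and on $\blacklozenge_k$, while the trainable output-layer weights decouple across the $m$ factors and correspond exactly to the expansion coefficients $(v_i)_{i\in\calI}$. I expect the only nontrivial bookkeeping to be the careful handling of the compound index set $\calI$ and the verification that output-restriction cleanly realizes the homogeneous boundary conditions without touching the hidden layers, so that depth-equality for parallelization is preserved; no new approximation-theoretic estimate is required beyond what Propositions \ref{prop:basisnet} and \ref{prop:parallel} already deliver.
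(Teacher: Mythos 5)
Your construction follows the paper's proof in all essentials: per-factor basis networks from Proposition \ref{prop:basisnet}, boundary conditions imposed by discarding boundary-associated shape functions, parallelization across the $m$ factors via Proposition \ref{prop:parallel}, and the vector-space structure of $\calNN(\blacklozenge;\calT,\domain)$ inherited factorwise. The one point where you deviate is the mechanism for imposing the homogeneous essential boundary conditions: you take the full basis NN of the unconstrained factor space and delete rows of the output layer, which leaves the hidden-layer subnetworks emulating the discarded shape functions in place. The paper instead assembles each per-factor NN (and each weighted-sum NN $\Phi^{\blacklozenge_k(\calT,\domain),v_{(k)}} = \sum_{j\in\calI_k} v_{(j,k)}\Phi^{\blacklozenge_k}_j$, via Proposition \ref{prop:sum}) only from the subnetworks indexed by the reduced set $\calI_k$, so the hidden layers never see the boundary indices.

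This difference matters for two of the precise claims. First, with output-trimming only, $\size(\widetilde\Phi^{\blacklozenge_k})$ is controlled by $Cd^2\sum_{j\in\calI_k^{\mathrm{full}}}s(j)$ rather than $Cd^2\sum_{j\in\calI_k}s(j)$, so your assertion that the total is bounded by $Cd^2\sum_{i\in\calI}s(i)$ with the reduced $\calI$ needs an extra argument. For the four concrete product spaces it does survive with a larger constant, because the unconstrained factors dominate the dropped boundary contributions (e.g. $\sum_{j\in\calV}s(j)=\sum_{f\in\calF}s(f)=(d+1)\snorm{\calT}$, so full and reduced sums differ by at most a factor of about $2$), but this should be said explicitly. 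Second, the statement asserts that the layer dimensions depend on $\calT$ only through $\{s(i)\}_{i\in\calI}$ and $\snorm{\calI}=\dim(\blacklozenge(\calT,\domain))$; with the discarded shape functions' subnetworks still present in the hidden layers, the layer dimensions depend on the full index sets as well, so this claim fails literally for your architecture (and your phrase ``outputs restricted to $\calI_k$'' for $\Phi^{\blacklozenge_k,v_{(k)}}$ has no meaning, since that network has a single vector-valued output; what you need is that $v_{(k)}$ in the constrained subspace has zero coefficients on the dropped indices). Both issues disappear if you trim one level earlier, i.e. parallelize, respectively sum, only the shape-function subnetworks with $j\in\calI_k$ --- which is exactly the paper's construction; the rest of your argument (depth $5$, the realization identities, identical hidden layers for $\Phi^{\blacklozenge}$ and $\Phi^{\blacklozenge,v}$, and linearity and bijectivity of ${\rm R}$) then goes through unchanged.
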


\begin{proof}
We use the same arguments as in the proof of \cite[Proposition 5.1]{LODSZ22_991}.

For $k=1,\ldots,m$
let $\Phi^{\blacklozenge_k(\calT,\domain)}$ 
be constructed as in the proof of \cite[Proposition 5.1]{LODSZ22_991}
(which is Proposition \ref{prop:basisnet})
as the parallelization 
$\Phi^{\blacklozenge_k(\calT,\domain)} 
	:= \Parallel{ \{ \Phi^{\blacklozenge_k}_j \}_{j\in\calI_k} }$
of subnetworks $\Phi^{\blacklozenge_k}_j$ 
which exactly emulate the shape functions $\theta^{\blacklozenge_k}_j$.
We deviate slightly from the construction in \cite{LODSZ22_991}.
By definition of $\calI_k$, 
which can be a strict subset of $\calV,\calE,\calF$ or $\calT$,
we omit shape functions which are associated to elements $j$ 
on the part of the boundary where homogeneous boundary conditions are imposed.
As a result, we have
$\snorm{\calI_k} = \dim \blacklozenge_k(\calT,\domain)$ 
also for those spaces.

Then, we define 
$\Phi^{\blacklozenge(\calT,\domain)} 
	:= \Parallel{ \{ \Phi^{\blacklozenge_k(\calT,\domain)} \}_{k=1}^m }$,
from which the formula for the realization follows.
All components have depth $5$, so the same holds for the parallelization.
The NN size satisfies 
\begin{align*}
\size(\Phi^\blacklozenge) 
	\leq &\, \sum_{k=1}^m \size(\Phi^{\blacklozenge_k}) 
	\leq \sum_{k=1}^m C d^2 \sum_{j\in \calI_k} s(j)
	= C d^2 \sum_{i\in\calI} s(i)
	\leq C d^2\mathfrak{s}(\calI) \dim( \blacklozenge(\calT,\domain)).
\end{align*}

Similarly, for all 
$v \in \blacklozenge(\calT,\domain)$
satisfying
$v_{(k)} = \sum_{ j\in\calI_k } v_{(j,k)} \theta^{\blacklozenge_k}_j$ 
for all $k=1,\ldots,m$,
we consider
$\Phi^{\blacklozenge_k(\calT,\domain),v_{(k)}}
	:= \sum_{j\in\calI_k} v_{(j,k)} \Phi^{\blacklozenge_k}_j$,
as constructed in the proof of \cite[Proposition 5.1]{LODSZ22_991},
where the sum of NNs is according to Proposition \ref{prop:sum}
and where we implement the scalar multiplication with $v_{(j,k)}$
by scaling the weights in the output layer of $\Phi^{\blacklozenge_k}_j$.
Again, we omitted shape functions associated to elements
on the part of the boundary where homogeneous boundary conditions are imposed.

We define 
$\Phi^{\blacklozenge(\calT,\domain),v} 
	:= \Parallel{ \{ \Phi^{\blacklozenge_k(\calT,\domain),v_{(k)}} \}_{k=1}^m }$,
from which the formula for the realization follows.
The NN has depth $5$
and the bound on the NN size is analogous to that of
$\Phi^{\blacklozenge(\calT,\domain)}$.

By comparing Proposition \ref{prop:parallel} and \ref{prop:sum},
we observe that the parallelization 
$\Phi^{\blacklozenge_k(\calT,\domain)}$ 
and NN sum 
$\Phi^{\blacklozenge_k(\calT,\domain),v_{(k)}}$
have identical hidden layers,
which are independent of $v_{(k)}$ 
and only depend on $\calT$ through $ \{s(j)\}_{j\in\calI_k} $ and $ \snorm{\calI_k} $.
Also their parallelizations
$\Phi^{\blacklozenge(\calT,\domain)}$ 
and 
$\Phi^{\blacklozenge(\calT,\domain),v}$
have identical hidden layers,
which are independent of $v$
and only depend on $\calT$ through $ \{s(i)\}_{i\in\calI} $ and $\snorm{\calI} $.

The linear structure of $\calNN(\blacklozenge;\calT,\domain)$
follows from the linear structure of the NNs in Proposition \ref{prop:basisnet}.
\end{proof}

\begin{definition}
\label{def:foslsbasisnet}
For given polytopal $ \domain \subset \R^d $, $ d\ge2 $
and a regular, simplicial triangulation $ \calT $ on $ \domain $,
we call the NN $ \Phi^{\blacklozenge} $
defined in Proposition \ref{prop:foslsbasisnet} a \emph{$\blacklozenge$-FoSLS basis NN}.
\end{definition}

\begin{remark}
\label{rem:foslsbasisnet}
For all $v \in\blacklozenge(\calT,\domain)$,
denoting by 
$\bsv_k = ( v_{(j,k)} )_{j\in\calI_k} 
	\in \R^{1\times \snorm{\calI_k}}$
the row vector of coefficients for the $k$'th component of $v$
with respect to the shape functions $\{ \theta^{\blacklozenge_k}_j \}_{j\in\calI_k}$,
the NN $\Phi^{\blacklozenge,v}$ can be obtained from $\Phi^\blacklozenge$
as follows.
Denoting the last layer weight matrix and bias vector of $\Phi^{\blacklozenge_k}$ by
$A^{(k)}$ and $b^{(k)}$,
those of $\Phi^\blacklozenge$ are given by
$A = \operatorname{diag}( A^{(1)}, \ldots, A^{(m)} )$ and
$b = ( ( b^{(1)} )^\top, \ldots, ( b^{(m)} )^\top )^\top$,
and those of $\Phi^{\blacklozenge,v}$ are given by
$\operatorname{diag}( \bsv_1 A^{(1)}, \ldots, \bsv_m A^{(m)} )$ 
and 
$( \bsv_1 b^{(1)} , \ldots, \bsv_m b^{(m)} )^\top$.
\end{remark}

\begin{remark}
\label{rem:foslsbasisnetreuse}
When multiple factors from $\blacklozenge_1,\ldots,\blacklozenge_m$
coincide (or are subspaces of each other, 
in case of homogeneous boundary conditions),
then a more efficient construction is possible
in which the shape functions are emulated only once,
but used for the emulation of multiple factor spaces.

We will demonstrate this for the spaces 
$\bbV_\bullet^{(3)}$ and $\bbV_\bullet^{(4)}$,
where all the factor spaces are based on the same finite element.
The same idea can be used if some but not all of the factor spaces
are based on the same finite element.

We emulate each shape function only once.
%
If $\blacklozenge = \bbV_\bullet^{(3)}$
we define 
$\blacklozenge_* = \Ne$ and 
$\calI_* = \calE$,
and if $\blacklozenge = \bbV_\bullet^{(4)}$
we define
$\blacklozenge_* = \So$ and 
$\calI_* = \calV$.
Then, let 
$\Phi^{\blacklozenge_*(\calT,\domain)}$ 
be as given by Proposition \ref{prop:basisnet}.
We obtain that
\begin{equation*}
    \realiz{\Phi^{\blacklozenge_*}}(x)_i
    = \, \theta^{\blacklozenge_*}_i(x)
    \quad\text{ for a.e. } x\in\domain
    \text{ and all } i\in\calI_*,
\end{equation*}
and that
there exists $C>0$ independent of $d$ and $\calT$ such that
\begin{align*}
\depth(\Phi^{\blacklozenge_*})
	= &\, 5
	,
	\qquad
\size(\Phi^{\blacklozenge_*})
    \leq C d^2 \sum_{i\in \calI_*} s(i) 
    \leq C d^2\mathfrak{s}(\calI_*) \dim( \blacklozenge_*(\calT,\domain))
    .
\end{align*}
The gain is that $\calI_* \in \{ \calE, \calV \}$ 
is much smaller than $\calI$ in Proposition \ref{prop:foslsbasisnet}.

For all $v \in\blacklozenge(\calT,\domain)$,
denoting by 
$\bsv_k = ( v_{(j,k)} )_{j\in\calI_*} 
	\in \R^{1\times \snorm{\calI_*}}$
the row vector of coefficients for the $k$'th component of $v$,
defining $v_{(j,k)} = 0$ if $j\in\calI_*\setminus\calI_k$ 
(which may be the case when homogeneous boundary conditions are imposed 
on $\blacklozenge_k$),
we now define $\widetilde\Phi^{\blacklozenge,v}$ as follows.
Denoting the last layer weight matrix and bias vector of
$\Phi^{\blacklozenge_*}$ by $A$ and $b$,
we define those of $\widetilde\Phi^{\blacklozenge,v}$ to be
$( ( \bsv_1 A )^\top, \ldots, ( \bsv_m A )^\top )^\top$ 
and 
$( \bsv_1 b , \ldots, \bsv_m b )^\top$.
For a constant $C>0$ independent of $d$ and $\calT$ it holds that
\begin{align*}
\realiz{\widetilde\Phi^{\blacklozenge,v}}_k(x)
	= &\, v_{(k)}(x)
	\quad\text{ for a.e. } x\in\domain
	\text{ and all } k=1,\ldots,m
	,
	\\
\depth(\Phi^{\blacklozenge,v})
	= &\, 5
	,
	\qquad
\size(\widetilde\Phi^{\blacklozenge,v})
    \leq
    C d^2 \sum_{i\in\calI} s(i)
     \leq C d^2 \mathfrak{s}(\calI) \dim( \blacklozenge(\calT,\domain)).
\end{align*}
Because the shape functions are emulated only once,
the hidden layers of 
$\widetilde\Phi^{\blacklozenge,v}$ 
have smaller size than those of 
$\Phi^{\blacklozenge,v}$
(compare the bounds on 
$\size(\Phi^{\blacklozenge})$ and $\size(\Phi^{\blacklozenge_*})$).
For the number of nonzero weights and biases in the output layer
of $\widetilde\Phi^{\blacklozenge,v}$ 
we do not obtain a better bound
than for that of $\Phi^{\blacklozenge,v}$,
hence the reduction in NN size is not visible 
in the bound on $\size(\widetilde\Phi^{\blacklozenge,v})$ stated above.
\end{remark}

The following analogue of Proposition \ref{prop:foslsbasisnet} 
for ReLU emulation of $\bbV_\bullet^{(4)}$ also holds.
\begin{proposition}
\label{prop:foslsCPLbasisnet}
Let $\domain\subset\R^d$, $d\geq2$, be a bounded, polytopal domain
and let $\blacklozenge = \bbV_\bullet^{(4)}$.
For every regular, simplicial triangulation $\calT$ of $\domain$,
there exists a strict ReLU NN 
$\Phi^{CPwL\;\blacklozenge} :=\Phi^{CPwL\;\blacklozenge(\calT,\domain)}$ 
which emulation (in parallel) the shape functions 
$\{ \theta^{\blacklozenge}_i \}_{i\in \calI}$.
That is, 
$\realiz{\Phi^{CPwL\;\blacklozenge}}\colon \domain \to \R^{|\calI|} $
satisfies
\begin{equation*}
\realiz{\Phi^{CPwL\;\blacklozenge}}_i(x)
= \, \theta^{\blacklozenge}_i(x)
\quad\text{ for all } x\in\domain
\text{ and all } i\in\calI.
\end{equation*}
There exists $C>0$ independent of $d$ and $\calT$ such that
\begin{align*}
\depth(\Phi^{CPwL\;\blacklozenge})
\leq &\, 8 + \log_2( \mathfrak{s}(\calI) ) + \log_2(d+1)
,
\\
\size(\Phi^{CPwL\;\blacklozenge})
\leq & C \snorm{\calI} \log_2( \mathfrak{s}(\calI) ) + C d^2 \sum_{i\in\calI} s(i)
\leq C d^2 \mathfrak{s}(\calI) \dim( \blacklozenge(\calT,\domain)).
\end{align*}

For all
$v \in\blacklozenge(\calT,\domain)$,
there exists a NN $\Phi^{CPwL\;\blacklozenge,v} := \Phi^{CPwL\;\blacklozenge(\calT,\domain),v} $
with only ReLU activations, such that for a constant $C>0$
independent of $d$ and $\calT$
\begin{align*}
\realiz{\Phi^{CPwL\;\blacklozenge,v}}_k(x)
= &\, v_{(k)}(x)
\quad\text{ for all } x\in\domain\text{ and all }k=1,\ldots,m
,
\\
\depth(\Phi^{CPwL\;\blacklozenge,v})
\leq &\, 8 + \log_2( \mathfrak{s}(\calI) ) + \log_2(d+1)
,
\\
\size(\Phi^{CPwL\;\blacklozenge,v})
\leq &\, C \snorm{\calI} \log_2( \mathfrak{s}(\calI) ) + C d^2 \sum_{i\in\calI} s(i)
\leq C d^2 \mathfrak{s}(\calI) \dim( \blacklozenge(\calT,\domain)).
\end{align*}

The layer dimensions and the lists of activation functions of
$\Phi^{CPwL\;\blacklozenge}$ and $\Phi^{CPwL\;\blacklozenge,v}$ are independent of $v$ and only
depend on $\calT$ through $ \{s(i)\}_{i\in\calI} $ and
$ \snorm{\calI} = \dim(\blacklozenge(\calT,\domain)) $.

The set
$\calNN(CPwL\;\blacklozenge;\calT,\domain) := \{ \Phi^{CPwL\;\blacklozenge,v} : v \in
\blacklozenge(\calT,\domain) \}$ together with the linear operation
$\Phi^{CPwL\;\blacklozenge,v} \widehat{+} \lambda\Phi^{CPwL\;\blacklozenge,w} := \Phi^{CPwL\;\blacklozenge,v+\lambda w}$
for all $v,w\in \blacklozenge(\calT,\domain)$ and all $\lambda\in\R$ is a
vector space.
The realization map
${\rm R} : \calNN(CPwL\;\blacklozenge;\calT,\domain) \to \blacklozenge(\calT,\domain)$
is a linear bijection.
\end{proposition}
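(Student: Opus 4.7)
The plan is to mirror the proof of Proposition \ref{prop:foslsbasisnet}, but substituting the pure-ReLU emulation from Proposition \ref{prop:CPLbasisnet} for each $\So$-factor rather than the mixed ReLU/BiSU emulation from Proposition \ref{prop:basisnet}. Since $\bbV_\bullet^{(4)} = (\So(\calT) \cap L^2(I; H^1_0(\spatialdomain))) \times \So(\calT)^d$, every factor space $\blacklozenge_k$ is built from continuous piecewise linear shape functions on $\calT$, so only ReLU activations are needed, and Proposition \ref{prop:CPLbasisnet} is applicable directly to each factor.

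First, for each $k=1,\ldots,m=d+1$, I would apply Proposition \ref{prop:CPLbasisnet} to obtain a ReLU NN $\Phi^{CPwL\;\blacklozenge_k(\calT,\domain)}$ whose realization is the vector $(\theta_j^{\So})_{j\in\calI_k}$ of shape functions associated to the index set $\calI_k$. For $k=1$ the set $\calI_1 = \calV \cap (\overline I \times \spatialdomain)$ omits vertices lying on $I\times\partial\spatialdomain$, so that homogeneous essential Dirichlet boundary conditions in space are enforced exactly by simply deleting the corresponding subnetworks; this preserves the identity $\snorm{\calI_k} = \dim \blacklozenge_k(\calT,\domain)$. All these component NNs share the common depth bound $8 + \log_2(\mathfrak{s}(\calI)) + \log_2(d+1)$ from Proposition \ref{prop:CPLbasisnet} (since $\mathfrak{s}(\calI_k)\leq \mathfrak{s}(\calI)$), so no depth padding is required before parallelization.

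Next, define $\Phi^{CPwL\;\blacklozenge} := \Parallel{\{\Phi^{CPwL\;\blacklozenge_k(\calT,\domain)}\}_{k=1}^m}$ via iterated application of Proposition \ref{prop:parallel}. The realization formula and depth bound then follow immediately. The size is additive, so using $\calI = \bigcup_{k=1}^m \calI_k \times \{k\}$ as a disjoint union,
\[
\size(\Phi^{CPwL\;\blacklozenge}) \leq \sum_{k=1}^m \bigl(C\snorm{\calI_k}\log_2(\mathfrak{s}(\calI)) + C d^2 \sum_{j\in\calI_k} s(j)\bigr) \leq C\snorm{\calI}\log_2(\mathfrak{s}(\calI)) + C d^2 \sum_{i\in\calI} s(i),
\]
which gives the claimed bound. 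For a specific $v\in\blacklozenge(\calT,\domain)$ with component expansions $v_{(k)} = \sum_{j\in\calI_k} v_{(j,k)} \theta^{\So}_j$, I would define $\Phi^{CPwL\;\blacklozenge,v}$ as the parallelization of the NN sums $\sum_{j\in\calI_k} v_{(j,k)}\Phi^{CPwL\;\blacklozenge_k,\theta^{\So}_j}$ (with scalar multiplication absorbed into the output layer weights as in Remark \ref{rem:foslsbasisnet}), and verify the bounds identically. That the hidden layers of $\Phi^{CPwL\;\blacklozenge}$ and $\Phi^{CPwL\;\blacklozenge,v}$ coincide, and depend on $\calT$ only through $\{s(i)\}_{i\in\calI}$ and $\snorm{\calI}$, follows as in the proof of Proposition \ref{prop:foslsbasisnet} by comparing Propositions \ref{prop:parallel} and \ref{prop:sum}.

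Finally, the vector-space structure of $\calNN(CPwL\;\blacklozenge;\calT,\domain)$ under $\widehat{+}$ and the linearity of $\realiz{\cdot}$ on it both transfer from Proposition \ref{prop:CPLbasisnet} applied factor-by-factor, and bijectivity holds because the restricted shape functions form a basis of each $\blacklozenge_k(\calT,\domain)$. The main obstacle I anticipate is purely bookkeeping: carefully tracking the indexing $\calI = \bigcup_k \calI_k\times\{k\}$ through the parallelization, and verifying that the boundary-condition restriction for $k=1$ neither breaks the basis property of the selected shape functions nor alters the realization formula. No genuinely new architectural idea is required; the proof reassembles pieces already supplied by Propositions \ref{prop:parallel}, \ref{prop:sum}, and \ref{prop:CPLbasisnet}.
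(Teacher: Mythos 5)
Your construction follows the paper's proof almost verbatim (per-factor application of the pure-ReLU basis emulation, parallelization, NN sums with coefficients absorbed into output weights, additivity of sizes, and transfer of the linear structure), but it contains one genuine flaw, and it is precisely the point on which the paper's proof of this proposition differs from that of Proposition \ref{prop:foslsbasisnet}. You assert that ``all these component NNs share the common depth bound $8 + \log_2(\mathfrak{s}(\calI)) + \log_2(d+1)$ \ldots so no depth padding is required before parallelization.'' A common \emph{upper bound} on the depths does not make the depths equal, and Proposition \ref{prop:parallel} is only stated for NNs of exactly equal depth $L$. The depths produced by Proposition \ref{prop:CPLbasisnet} depend on the actual index sets: for $k=1$ the set $\calI_1 = \calV \cap (\overline I \times \spatialdomain)$ omits the vertices on $I\times\partial\spatialdomain$, so $\mathfrak{s}(\calI_1)$ (and hence the depth of the corresponding subnetwork, which is driven by $\log_2$ of the patch sizes) can be strictly smaller than for the unconstrained factors with $\calI_k=\calV$. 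In that case the $m$ subnetworks have unequal depths and your parallelization step, as written, is not defined.

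The repair is exactly what the paper does and is cheap: concatenate each subnetwork that is not of maximal depth with a ReLU identity NN so that all factors reach the common (maximal) depth, precisely as in the proof of \cite[Proposition 5.7]{LODSZ22_991}; this is possible because all activations are ReLU, it preserves the realizations, adds only $O(d\cdot\depth)$ nonzero weights per padded subnetwork, and keeps the stated depth and size bounds as well as the independence of the hidden layers from $v$. With this one modification your argument coincides with the paper's proof; everything else in your proposal (indexing through $\calI=\bigcup_k \calI_k\times\{k\}$, the boundary-condition restriction preserving the basis property, and the vector-space/bijection statements) is handled as you describe.
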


\begin{proof}
The proof is analogous to that of Proposition \ref{prop:foslsbasisnet},
except for the fact that 
the depth of the subnetworks corresponding to the different factor spaces
may be larger than $5$.
In fact, when on one of the factor spaces homogeneous boundary conditions are imposed,
the depth of the corresponding subnetwork 
may be smaller than the depth of some of the other subnetworks.
If that is the case, 
we concatenate the subnetworks which are not of maximal depth
with a ReLU identity NN,
precisely as in the proof of \cite[Proposition 5.7]{LODSZ22_991}.
\end{proof}

The alternative approach from Remark \ref{rem:foslsbasisnetreuse}
can also be applied to the deep ReLU NNs from Proposition \ref{prop:CPLbasisnet},
which gives ReLU NNs that emulate the same functions
as those in Proposition \ref{prop:foslsCPLbasisnet},
but have a smaller NN size.

\begin{remark}
\label{rem:deepfoslsocp}
We stated all results in this section for 
$\blacklozenge \in \{ \bbV_\bullet^{(1)}, 
	\bbV_\bullet^{(2)}, \bbV_\bullet^{(3)}, \bbV_\bullet^{(4)} \}$,
but they hold for arbitrary Cartesian products
of spaces emulated in Section \ref{sec:approximation},
with homogeneous boundary conditions imposed where necessary.

In particular, they also hold for the discretizations of the optimal control problems.
For those, we need to discretize $\bbV_Y \times \bbV_P$,
where each of the factors $\bbV_Y, \bbV_P$ 
is discretized by $\bbV_\bullet^{(\ell)}(\domain)$ for $\ell\in\{1,3,4\}$.
Thus, we have $m=4$ if $\ell\in\{1,3\}$ and $m=2d+2$ if $\ell=4$.
\end{remark}

From Section \ref{sec:LSQFEM}, 
we obtain that the FoSLS NNs from Proposition \ref{prop:foslsbasisnet} 
are quasioptimal.\footnote{The 
following analysis also holds for the NNs 
from Remark \ref{rem:foslsbasisnetreuse} 
and Proposition \ref{prop:foslsCPLbasisnet}.}
Indeed, setting
\begin{equation}\label{eq:deepFoSLSMinN}
\Phi_{\bullet} 
= 
{\rm arg}\min_{\widetilde\Phi_{\bullet} \in \calNN(\blacklozenge;\calT,\domain)} 
	\| F - \VL \realiz{\widetilde\Phi_{\bullet}} \|^2_{\bbL(\domain)}
\;,
\end{equation}
we observe that the argmin is well-defined and unique 
because $\calNN(\blacklozenge;\calT,\domain)$ is a finite dimensional space 
by Proposition \ref{prop:foslsbasisnet} and $\VL$ and $\rm R$ are linear bijections.
This implies that
\begin{align}
\nonumber
{c_\VL} \| U - \realiz{\Phi_\bullet} \|_{\bbV(\domain)}
	\leq \| F - \VL \realiz{\Phi_\bullet} \|_{\bbL(\domain)}
	= &\, \min_{\widetilde\Phi_{\bullet} \in \calNN(\blacklozenge;\calT,\domain)} 
	\| F - \VL \realiz{\widetilde\Phi_{\bullet}} \|_{\bbL(\domain)}
	\\\label{eq:deepLSQQO}
	\leq &\, C_\VL \min_{\widetilde\Phi_{\bullet} \in \calNN(\blacklozenge;\calT,\domain)} 
	\| U - \realiz{\widetilde\Phi_{\bullet}} \|_{\bbV(\domain)}
	\\\nonumber
	= &\, C_\VL \min_{v_\bullet \in \bbV_\bullet(\domain)} \| U - v_\bullet \|_{\bbV(\domain)}
,
\end{align}
where the last equality follows from the fact that 
the finite element functions are realized exactly by the FoSLS NNs
and the fact that the realization
${\rm R}: \calNN(\blacklozenge;\calT,\domain) \to \blacklozenge(\calT,\domain)$
is a bijection.
 
We close the discussion by remarking that
NNs can improve upon a fixed $\bbV_\bullet(\domain)$
if we allow their hidden layers, which encode mesh connectivity and 
element shapes, to adapt to the target function $U$.
A strategy for 
\emph{adaptive growth of NNs with certain optimality properties}
based on adaptive least squares finite elements is 
described in Section \ref{sec:AFEM}.
\section{Adaptive NN growth strategies}
\label{sec:AFEM}
In the LSQ-G method, the value of the LSQ functional $\LS$ 
evaluated at the LSQ-G approximation $U_\bullet$ in \eqref{eq:FoSLSMinN}
is a reliable and efficient estimate of the LSQ-G error, 
due to \eqref{eq:LSQQO}.
For LSQ-G, the adaptive finite element method (AFEM) algorithm from \cite{Siebert2011},
as used in \cite{FuePraeLSQ,GaStLSQ}, implies, in particular,
a convergent iterative discretization algorithm with mathematically 
guaranteed convergence and, under assumptions, also with guaranteed optimality.

It follows the usual
\Solve-\Estimate-\Mark-\Refine~loop
and is stated below as Algorithm \ref{algo:AFEM}.
Under suitable assumptions, 
which we first state together with the definitions of 
\Solve, \Estimate, \Mark~and \Refine,
the algorithm converges, as shown in \cite[Section 2]{Siebert2011},
which we recall in Proposition \ref{prop:plainconv}.
The application of the proposition 
in the context of Sections \ref{sec:Poisson}--\ref{sec:nLSQParab}
is the topic of Remark \ref{rem:plainconvexamples}.
Optimal control problems are discussed in 
Remarks \ref{rem:plainconvOCP} and \ref{rmk:Proj}.

For a conforming simplicial triangulation $\calT_\bullet$ of $\domain$
comprising closed simplices 
and a set of 
marked elements $\calM_\bullet \subset \calT_\bullet$,
let
$\Refine(\calT_\bullet,\calM_\bullet)$ 
be the application of newest vertex bisection (NVB)
to the mesh $\calT_\bullet$, 
bisecting all marked elements $K \in \calM_\bullet$ at least once.
Given a conforming initial triangulation $\calT_0$,
we only consider meshes $\calT_\bullet$
which have been obtained from $\calT_0$ by applications of \Refine.
We denote the set of all meshes which can be obtained in this way by 
$\bbT(\calT_0)$.
These meshes are locally quasi-uniform,
they satisfy \cite[Equation (2.4)]{Siebert2011}.
%
%

As finite dimensional spaces taking the role of $\bbV_\bullet(\domain)$ in \eqref{eq:FoSLSMinN}
we use FEM spaces on collections of regular, simplicial triangulations 
$\calT_\bullet \in\bbT(\calT_0)$ 
that have the following properties.
\begin{itemize}
\item[(S1)] 
Conforming:
$\bbV_\bullet(\domain) \subset \bbV(\domain)$,
\item[(S2)]
Nested:
for all $\calT_\bullet\in\bbT(\calT_0)$
and all refinements $\calT_\circ \in \bbT(\calT_\bullet)$ 
holds 
$\bbV_\bullet(\domain) \subset \bbV_\circ(\domain)$,
where $\bbV_\bullet(\domain)$, $\bbV_\circ(\domain)$
denote the FEM spaces on $\calT_\bullet$, $\calT_\circ$ respectively,
\item[(S3)]
Local approximation property:
for a dense subspace ${\bbD^s}(\domain) \subset \bbV(\domain)$
of functions with higher regularity, endowed with the norm
$\norm[\bbD^s(\domain)]{\cdot}$ 
satisfying (A3),
the following holds.
There exist\footnote{
Here, $s$ quantifies the approximability of functions in $\bbD^s(\domain)$
by elements in $\bbV_\bullet(\domain)$.
This can often be related to the smoothness of elements in ${\bbD^s}(\domain)$.
}
$C>0$ and $s>0$
such that 
for all $\calT_\bullet\in\bbT(\calT_0)$
there exists a linear interpolation operator 
$\calA_\bullet : \bbD^s(\domain) \to \bbV_\bullet(\domain)$
satisfying 
\begin{align*}
\norm[\bbV(K)]{ v - \calA_\bullet v } 
	\leq C \snorm{K}^{-s/d} \norm[\bbD^s(K)]{ v },
	\qquad
	\text{ for all } v \in {\bbD^s}(K)
	\text{ and } K \in \calT_\bullet
	,
\end{align*}
where $\snorm{K}$ denotes the $d$-dimensional Lebesgue measure of $K$.
\end{itemize}
By $\Solve(\calT_\bullet,\bbV_\bullet(\domain)) \in \bbV_\bullet(\domain)$
we denote the exact solution $U_\bullet$ of \eqref{eq:FoSLSMinN}.

As an error estimator\footnote{This finite element terminology
should not be confused with the concept of an estimator in statistics.},
we use the least squares functional \eqref{eq:FoSLSMin}.
For all $\calT_\bullet \in \bbT(\calT_0)$ 
and $u_\bullet\in\bbV_\bullet(\domain)$
we denote 
$\eta_\bullet(u_\bullet, K) = \norm[\bbL(K)]{ F - \VL u_\bullet }$ 
for all $K\in\calT_\bullet$
and 
$\eta_\bullet(u_\bullet, \calU_\bullet) 
	= \left( \sum_{K\in\calU_\bullet} \eta_\bullet(u_\bullet, K)^2 \right)^{1/2}$
for all subsets $\calU_\bullet \subset \calT_\bullet$.
We define 
$\Estimate(\calT_\bullet,u_\bullet) 
	= \{ \eta_\bullet(u_\bullet, K) \}_{K\in\calT_\bullet}$.
In the analysis below,
we use the following mesh-related notation:
$\domain_\bullet(K) = \cup_{K'\in\calT_\bullet :\atop K\cap K' \neq\emptyset} K'$
denotes the patch around $K$.
We stipulate the following assumptions.
\begin{itemize}
\item[(E1)]
Upper bound:
there exists $C>0$ such that
for all $v \in \bbV(\domain)$
\begin{align*}
\sum_{K\in\calT_\bullet} ( F - \VL U_\bullet, \VL v )_{\bbL(K)}
	\leq C \sum_{K\in\calT_\bullet} \eta_\bullet( U_\bullet, K) \norm[\bbV(\domain_\bullet(K))]{ v }
,
\end{align*}
where $U_\bullet$ denotes the exact solution of \eqref{eq:FoSLSMinN}.
\item[(E2)]
Local stability:
there exist $D \in L^2(\domain)$ and $C>0$ such that
for all $K\in\calT_\bullet$:
\begin{align*}
\eta_\bullet(U_\bullet,K) 
	\leq C ( \norm[\bbV(\domain_\bullet(K))]{ U_\bullet } 
		+ \norm[\bbL(\domain_\bullet(K))]{ D } )
.
\end{align*}
\end{itemize}
In the proof of \cite[Theorem 2]{FuePraeLSQ},
Assumptions (E1)--(E2) are shown to follow from
\begin{itemize}
\item[(L)] 
Local boundedness:
there exists $C>0$ such that for all $\calT_\bullet \in \bbT(\calT_0)$ holds
\begin{align*}
\norm[\bbL(K)]{ \VL v } \leq C \norm[\bbV(\domain_\bullet(K))]{ v }
	,
	\qquad
	\text{ for all } v \in \bbV(\domain)
	\text{ and } K \in \calT_\bullet
	.
\end{align*}
\end{itemize}
As stated in \cite[Remark 3]{FuePraeLSQ},
local boundedness (L) also implies \emph{local efficiency}:
for all $\calT_\bullet \in \bbT(\calT_0)$
\begin{align*}
\eta_\bullet( v_\bullet, K ) 
	= \norm[\bbL(K)]{ \VL ( U - v_\bullet ) } 
	\leq C \norm[\bbV(\domain_\bullet(K))]{ U - v_\bullet }
	,
\qquad
\text{ for all } v_\bullet \in \bbV_\bullet(\domain)
\text{ and } K \in \calT_\bullet
.
\end{align*}

Several widely-used marking strategies are admissible here.
For 
$u_\bullet \in \bbV_\bullet(\domain)$ 
it is only required that
the set of marked elements 
$\calM_\bullet = \Mark( \calT_\bullet, \{ \eta_\bullet(u_\bullet, K) \}_{K\in\calT_\bullet} ) 
	\subset \calT_\bullet$
satisfies the following assumption.
\begin{itemize}
\item[(M)]
Marking assumption:
for a function $g: [0,\infty)\to[0,\infty)$ 
which is continuous in $0$ and satisfies $g(0)=0$,
it holds that
\begin{align*}
\max_{K \in \calT_\bullet \setminus \calM_\bullet} \eta_\bullet(u_\bullet, K)
	\leq g\Big( \max_{K \in \calM_\bullet} \eta_\bullet(u_\bullet, K) \Big)
.
\end{align*}
\end{itemize}
See \cite[Section 4.1]{Siebert2011}
for examples of marking strategies satisfying (M), 
which include
the maximum strategy,
and D\"orfler's strategy as formulated in \cite[Section 4.1, Item (c)]{Siebert2011}.

\begin{algorithm}
\caption{Adaptive Finite Element algorithm}
\label{algo:AFEM}
\begin{algorithmic}[1] 
\Input 
Initial partition $\calT_0$ of $\domain$
\Loopstart For all $\ell = 0, 1, \ldots$ repeat lines \ref{line:solve}--\ref{line:refine}:
\State 
\label{line:solve}
$\Solve(\calT_\ell,\bbV_\ell(\domain)) 
	\to U_\ell$
\State 
\label{line:estimate}
$\Estimate(\calT_\ell,U_\ell)
	\to \{ \eta_\ell(U_\ell, K) \}_{K\in\calT_\ell}$
\State 
\label{line:mark}
$\Mark( \calT_\ell, \{ \eta_\ell(U_\ell, K) \}_{K\in\calT_\ell} )
	\to \calM_\ell$
\State 
\label{line:refine}
$\Refine(\calT_\ell,\calM_\ell)
	\to \calT_{\ell+1}$
\Output 
For all $\ell\in\N_0$
the mesh $\calT_\ell$, 
the approximation $U_\ell \in \bbV_\ell(\domain)$
and the error indicator $\eta_\ell(U_\ell,\calT_\ell)$
\end{algorithmic}
\end{algorithm}
%

\begin{proposition}[{{\cite[Theorem 2.1]{Siebert2011}}}]
\label{prop:plainconv}
Assume that (A1)--(A4), (S1)--(S3) and (E1)--(E2) hold
and that a marking strategy satisfying (M) is used.

Then the iterates of Algorithm \ref{algo:AFEM} converge, i.e.
$\norm[\bbV(\domain)]{ U_\ell - U } \to 0$ for $\ell \to \infty$.
By \eqref{eq:PDEUniq}, also $\LS(U_\ell;F) \to 0$ for $\ell \to \infty$.
\end{proposition}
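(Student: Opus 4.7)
The plan is to verify that the abstract framework of \cite{Siebert2011} applies in our LSQ setting, and then to exploit the structural assumptions (A1)--(A4), (S1)--(S3), (E1)--(E2), (M) in the standard three-step \emph{plain convergence} argument: (i) extract a limit $U_\infty$ by monotonicity of the LSQ energies on the nested sequence of discrete spaces, (ii) show that the estimator contributions vanish asymptotically on every element, and (iii) upgrade this to $U_\infty = U$ via the upper bound (E1), closing the loop through the error-residual relation \eqref{eq:PDEUniq}.

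First, introduce the \emph{limit space}
\[
\bbV_\infty(\domain) := \overline{\textstyle\bigcup_{\ell \geq 0} \bbV_\ell(\domain)}^{\,\|\cdot\|_{\bbV(\domain)}}.
\]
By (S1)--(S2) this is a closed subspace of $\bbV(\domain)$, so the LSQ minimizer $U_\infty \in \bbV_\infty(\domain)$ of $v \mapsto \LS(v;F)$ exists and is unique by (A1). Using (S2), Galerkin orthogonality with respect to the inner product $(\VL\cdot,\VL\cdot)_{\bbL(\domain)}$ and Céa's lemma (both consequences of (A1)) yield $\| \VL(U_\infty - U_\ell)\|_{\bbL(\domain)} \to 0$, hence by (A1) also $U_\ell \to U_\infty$ in $\bbV(\domain)$.

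The main work is to prove $U_\infty = U$. Decompose $\calT_\ell$ into elements that are refined only finitely often and those that are eventually further refined; denote the latter collection by $\calR_\ell \subset \calT_\ell$. By the properties of NVB and (S3), the set $\calR_\ell$ has vanishing Lebesgue measure in the sense that $\max_{K \in \calR_\ell} |K| \to 0$. Combining (E2) with (A4) (norm continuity of $\|\cdot\|_{\bbV(\cdot)}$) and the boundedness of $\{U_\ell\}$ in $\bbV(\domain)$ (which follows from $U_\ell \to U_\infty$), the estimator contributions on shrinking elements satisfy $\max_{K\in\calR_\ell} \eta_\ell(U_\ell,K) \to 0$. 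The marking assumption (M), together with the fact that marked elements are always refined (so eventually belong to $\calR_\ell$), then propagates this decay to the unmarked elements: $\max_{K \in \calT_\ell} \eta_\ell(U_\ell,K) \to 0$ via the continuity of $g$ at $0$.

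To conclude, invoke the upper bound (E1) with the test function $v := U - U_\infty \in \bbV(\domain)$. Writing the LSQ residual $F - \VL U_\ell$ as a Riesz representative of a functional on $\bbV(\domain)$, (E1) and a finite-overlap argument for the patches $\domain_\ell(K)$ (which follows from shape regularity preserved under NVB) give
\[
( F - \VL U_\ell , \VL(U - U_\infty))_{\bbL(\domain)}
\;\lesssim\; \eta_\ell(U_\ell, \calT_\ell)\,\|U - U_\infty\|_{\bbV(\domain)}.
\]
Passing to the limit $\ell \to \infty$ and using $U_\ell \to U_\infty$, the left-hand side tends to $\|\VL(U - U_\infty)\|_{\bbL(\domain)}^2$; the hardest step is controlling $\eta_\ell(U_\ell,\calT_\ell)$, and this follows by splitting into $\calR_\ell$ and its complement as above. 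Hence $\VL(U - U_\infty) = 0$ and, by (A1), $U_\infty = U$. Combining with Step (i) yields $\|U_\ell - U\|_{\bbV(\domain)} \to 0$, and the error-residual relation \eqref{eq:PDEUniq} gives $\LS(U_\ell;F) = \|F - \VL U_\ell\|_{\bbL(\domain)}^2 \to 0$.

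The main obstacle is step (iii): bounding the sum $\eta_\ell(U_\ell,\calT_\ell)$ uniformly and showing that the contributions from the ``non-vanishing'' part of the mesh are actually controlled by the marked (hence shrinking) part via (M); this is precisely where the abstract hypotheses of \cite[Theorem~2.1]{Siebert2011} are invoked, and verifying that they hold verbatim in the FoSLS setting above -- particularly the compatibility of the local norm $\|\cdot\|_{\bbV(K)}$ with (A3)--(A4) on patches generated by NVB -- is the technical core of the proof.
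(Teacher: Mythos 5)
First, note that the paper does not prove this proposition itself: it is quoted from \cite[Theorem 2.1]{Siebert2011}, and the paper's contribution is only the verification (elsewhere) that (A1)--(A4), (S1)--(S3), (E1)--(E2), (M) hold for the FoSLS examples. You are therefore attempting to reprove the cited result, and your steps (i) and (ii) do follow the Morin--Siebert--Veeser/Siebert strategy correctly in outline: the limit $U_\infty$ via nestedness and C\'ea, the vanishing of element volumes of eventually-refined elements under NVB, and the decay $\max_{K\in\calM_\ell}\eta_\ell(U_\ell,K)\to 0$, propagated to all elements by (M). (Two small repairs there: the decay on shrinking elements needs the \emph{convergence} $U_\ell\to U_\infty$ together with (A4) applied to the fixed functions $U_\infty$ and $D$ -- mere boundedness of $\{U_\ell\}$ is not enough, since (A4) is not uniform over bounded sets -- and $\snorm{K}\to 0$ must be upgraded to $\snorm{\domain_\ell(K)}\to 0$ via the local quasi-uniformity of NVB meshes; also the vanishing of volumes is a property of NVB, not of (S3).)

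The genuine gap is in your step (iii). Your inequality with the global test function $v=U-U_\infty$ requires $\eta_\ell(U_\ell,\calT_\ell)\to 0$ to yield anything: but $\eta_\ell(U_\ell,\calT_\ell)=\norm[\bbL(\domain)]{F-\VL U_\ell}$, which by \eqref{eq:PDEUniq} is \emph{equivalent to the conclusion} $\norm[\bbV(\domain)]{U-U_\ell}\to 0$; in the limit your inequality reduces to the tautology $\norm[\bbL(\domain)]{\VL(U-U_\infty)}^2\lesssim\norm[\bbL(\domain)]{\VL(U-U_\infty)}^2$. Your claim that the global estimator is controlled ``by splitting into $\calR_\ell$ and its complement as above'' does not close this: what was established above is only that the elementwise \emph{maximum} of $\eta_\ell$ tends to zero, and since the number of elements is unbounded this does not imply decay of the $\ell^2$-sum; in particular on the never-refined region no smallness of the sum is available from your argument. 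The proof in \cite{Siebert2011} avoids this circularity by never requiring the global estimator to vanish a priori: one tests the residual of $U_\ell$, using Galerkin orthogonality, against $w-\calA_\ell w$ for $w$ in the dense subspace $\bbD^s(\domain)$, applies (E1) to that difference, and then splits the sum. On the eventually-refined part the local interpolation bounds (S3) supply a factor $\max_K\snorm{K}^{s/d}\to 0$ while the estimator sum is only needed to be \emph{uniformly bounded} (which follows from (E2) and finite patch overlap); on the never-refined part one combines the per-element decay coming from (M) with a separate limiting/summation argument (e.g., in the FoSLS case, $\norm[\bbL(K)]{F-\VL U_\infty}=0$ for every element of the limiting stable mesh, hence the residual of $U_\infty$ vanishes a.e.\ there, and $U_\ell\to U_\infty$ transfers this to $\eta_\ell$ on that region). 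Tellingly, your proposal never uses (S3), the density of $\bbD^s(\domain)$, or Galerkin orthogonality against an interpolant -- precisely the hypotheses and tools that make this step work -- so as written the argument does not establish $U_\infty=U$.
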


\begin{remark}
\label{rem:plainconvexamples}
For the examples from Sections \ref{sec:Poisson}--\ref{sec:nLSQParab},
with the finite-dimensional subspaces from \eqref{eq:bbVPoisson}--\eqref{eq:bbVParab},
Assumptions (A1)--(A4), (S1)--(S3) and (L) are proved in
\cite[Section 3.1]{FuePraeLSQ}, 
\cite[Section 3.3]{FuePraeLSQ}, 
\cite[Section 3.4]{FuePraeLSQ} 
and 
\cite[Proofs of Theorems 2.3 and 3.3]{GaStLSQ}, 
respectively.
\end{remark}

In \cite[Theorem 3.1]{CS2018}, 
it was shown for the Poisson, Helmholtz, linear elasticity and Maxwell LSQ problems 
from Sections \ref{sec:Poisson}, \ref{sec:Elasticity} and \ref{sec:nLSQCEM},
with their discretization from 
\eqref{eq:bbVPoisson}, \eqref{eq:bbVElasticity} and \eqref{eq:bbVMaxw},
that the LSQ functional is asymptotically exact.
I.e., 
for all $\epsilon>0$ there exists $\delta>0$
such that if $\bbV_\bullet(\domain)$ has meshsize at most $\delta$,
then the 
exact PDE solution $U\in \bbV(\domain)$ of \eqref{eq:PDE} 
and the 
approximate solution $U_\bullet\in \bbV_\bullet(\domain)$ 
which minimizes the LSQ-functional in \eqref{eq:FoSLSMinN}
satisfy
\begin{align*}
(1-\epsilon) \| U - U_\bullet \|_{\bbV(\domain)}
\leq
\| F - \VL U_\bullet \|_{\bbL(\domain)}
\leq 
(1+\epsilon) \| U - U_\bullet \|_{\bbV(\domain)}
.
\end{align*}

For the differential operator 
from Section \ref{sec:Poisson}, \ref{sec:nLSQCEM} or \ref{sec:nLSQParab},
we recalled in Sections \ref{sec:OCPPoisson}--\ref{sec:OCPSpacetime} 
the corresponding OCP
from \cite[Section 3]{TFMKOptCtrl2022}.
Examples of finite dimensional subspaces for their discretization 
were stated in Section \ref{sec:LSQFEM}.
We now recall in Remark \ref{rem:plainconvOCP} 
that for these discretizations the
hypotheses (S1)--(S3) and (L) are satisfied 
for the FoSLS formulation \eqref{eq:FoSLSOCP} of the OCP.
This means that it can be solved using Algorithm \ref{algo:AFEM},
as explained in Remark \ref{rmk:Proj}.

\begin{remark}
\label{rem:plainconvOCP}
For the OCP's from Sections \ref{sec:OCPPoisson}--\ref{sec:OCPMaxwell},
with the discretizations proposed in Section \ref{sec:LSQFEM},
it has been shown in Remark \ref{rem:plainconvexamples}
that (S1)--(S3) and (L) hold for $\VL_Y = \VL_P$.
For the heat equation OCP from Section \ref{sec:OCPSpacetime},
with the discretization from Section \ref{sec:LSQFEM},
it follows that (S1)--(S3) and (L) hold for $\VL_Y$ and $\VL_P$,
because all steps in the proof of \cite[Proof of Theorem 3.3]{GaStLSQ}
also hold with the additional term $\norm[L^2(\spatialdomain)]{ u_1(T,\cdot) }^2$
in the definition of the norm on $\bbV_Y = \bbV_P$.
This term can be treated in the same way as the term
$\norm[L^2(\spatialdomain)]{ u_1(0,\cdot) }^2$,
which was already considered in \cite[Proof of Theorem 3.3]{GaStLSQ}.

It then directly follows from the fact that 
Assumptions (S1)--(S3) are satisfied for $\VL_Y$ and $\VL_P$,
that they are also satisfied for $\VL$ defined in \eqref{eq:FoSLSOCP}.
It remains to verify (L) for $\VL$.
Using that (L) has already been shown for $\VL_Y$ and $\VL_P$,
it remains to show that 
$\VB \Pi_\ad \VC^{-1} \VB^* \VI_P$
and 
$\VA^* \VA \VI_Y$
satisfy (L).
For all $K \in \calT$,
it follows directly from the definitions of the maps $\VA, \VA^*, \VB, \VB^*, \VC^{-1}, \Pi_\ad$ 
in the three examples from Sections \ref{sec:OCPPoisson}--\ref{sec:OCPSpacetime}
that
\begin{align*}
\norm[\widehat{\bbL}(K)]{ \VA V } \leq &\, C \norm[\widehat{\bbL}(K)]{ V },
&
\norm[\widehat{\bbL}(K)]{ \VA^* V } \leq &\, C \norm[\widehat{\bbL}(K)]{ V },
\\
\norm[\widehat{\bbL}(K)]{ \VB W } \leq &\, C \norm[\bbX(K)]{ W },
&
\norm[\bbX(K)]{ \VB^* V } \leq &\, C \norm[\widehat{\bbL}(K)]{ V },
\\
\norm[\bbX(K)]{ \VC^{-1} W } \leq &\, C \norm[\bbX(K)]{ W },
&
\norm[\bbX(K)]{ \Pi_\ad W } = &\, \norm[\bbX(K)]{ W }
.
\end{align*}
For $\VI_Y, \VI_P$ from Sections \ref{sec:OCPPoisson}--\ref{sec:OCPMaxwell},
the maps $\VI_Y : \bbV_Y \to \widehat{\bbL}, \VI_P : \bbV_P \to \widehat{\bbL}$ 
are continuous embeddings, hence
$\norm[\widehat{\bbL}(K)]{ \VI_Y Y } \leq C \norm[\bbV_Y(K)]{ Y }$
and
$\norm[\widehat{\bbL}(K)]{ \VI_P P } \leq C \norm[\bbV_P(K)]{ P }$.
For $\VI_Y$ from Section \ref{sec:OCPSpacetime},
the following estimate now follows easily, 
because of the term 
$\norm[L^2(\partial_T K)]{ u_1(T,\cdot)|_{\partial_T K} }^2$
in the definition of the $\bbV_Y$-norm.
\begin{align*}
\norm[\widehat{\bbL}(K)]{ \VI_Y Y }^2 
	= &\, \| u_1 \|^2_{L^2(K)} 
	+ \| \bsu_2 \|^2_{L^2(K)^d} 
	+ \norm[L^2(\partial_T K)]{ u_1(T,\cdot)|_{\partial_T K} }^2
	\\
	\leq &\, \| u_1 \|^2_{L^2(K)}
	+ \| \nabla_x u_1 \|^2_{L^2(K)^d} 
	+ \| \bsu_2 \|^2_{L^2(K)^d} 
	+ \| \divv \bsu \|^2_{L^2(K)}
	\\
	&\, + \norm[L^2(\partial_0 K)]{ u_1(0,\cdot)|_{\partial_0 K} }^2
	+ \norm[L^2(\partial_T K)]{ u_1(T,\cdot)|_{\partial_T K} }^2
	\\
	= &\, \norm[\bbV_Y(K)]{ Y }^2
	,
\end{align*}
where we denoted 
$Y = ( u_1, \bsu_2 )$.
Similarly,
it holds that
$\norm[\widehat{\bbL}(K)]{ \VI_P P } \leq \norm[\bbV_P(K)]{ P }$.
Thus, $\VA, \VA^*, \VB, \VB^*, \VC^{-1}, \Pi_\ad, \VI_Y, \VI_P$
are bounded linear operators on the function spaces on $K$,
which shows that the same holds for their compositions 
$\VB \Pi_\ad \VC^{-1} \VB^* \VI_P$
and 
$\VA^* \VA \VI_Y$.
This completes the proof of (L) for $\VL$.
\end{remark}

\begin{remark} \label{rmk:Proj}
By Remark \ref{rem:ocpa2}, minimization of $\LS(v;F)$ over $v\in\bbV$
is equivalent to minimization of $\LS(v;\Pi_{{\rm range}(\VL)} F)$ over $v\in\bbV$,
which satisfies (A1)--(A4).
By Remark \ref{rem:plainconvOCP},
application of Proposition \ref{prop:plainconv} shows that
the iterates of Algorithm \ref{algo:AFEM} converge and satisfy
$\norm[\bbV(\domain)]{ U_\ell - U } \to 0$ for $\ell \to \infty$
and
$\LS(U_\ell;\Pi_{{\rm range}(\VL)} F) \to 0$ for $\ell \to \infty$,
thus $ \LS(U_\ell;F) \to \| \Pi_{{\rm range}(\VL)} F - F \|^2_{\bbL}$
by \eqref{eq:LSQLdecomp}.
\end{remark}

With the FoSLS NNs from Section \ref{sec:deepFoSLS},
Proposition \ref{prop:plainconv} 
together with Remarks \ref{rem:plainconvexamples}, \ref{rem:plainconvOCP}
and \ref{rmk:Proj}
directly implies convergence of NN approximations.

The above adaptive NN growth algorithms are known to convergence 
for some of the boundary value problems in FoSLS formulations
that were presented above, with optimal rates. 
We refer to \cite{CCPark2015,CollMarkLSQOptRat}
and the survey \cite{PBringmann23} and the references there.
%

%
\section{Conclusions and Perspectives}
\label{sec:Concl}
%
The proposed FoSLS formulations yield, for a wide range of elliptic
and parabolic PDEs in bounded, polyhedral domains,
\emph{variationally correct, numerically computable loss functions} 
for NN training.
The numerically computable FoSLS loss $\calE_{\mathrm{FOSLS}}(\theta)$ 
of a NN realization $U_\theta$
occurring at an admissible $\theta\in \Theta$ during 
training is a \emph{computable upper bound of the
approximation accuracy of $U_\theta$ with respect to the FoSLS solution $U$,
in the physically relevant norm}.
We indicated the connection of adaptive least squares FE algorithms
based on FoSLS formulations with $L^2(\domain)$ loss functions, with
\emph{Neural Network Growth} strategies.
Adaptive FoSLS based FEM, as emulated in the present paper
are known to be rate-optimal in most of the problems considered
in Sec.~\ref{sec:LSQForm}.
See, e.g., \cite{CCPark2015,CollMarkLSQOptRat}.
Proven mathematical statements of rate optimality on the sequence of
approximate solutions produced by adaptive least squares FE methods
transfer to corresponding adaptive NN growth strategies based on the
proposed LSQ formulations.
Within the proposed, adaptive LSQ formulations,
the ESTIMATE-MARK loop corresponds to
local equilibrium indicators for out-of-equilibrium of certain subnetworks.
Furthermore, algorithmic steering of mesh-refinement patterns
could also be delegated to NNs \cite{BohnMF21}.

We related the LSQ functional of FoSLS PDE formulations,
which is known to allow reliable and efficient error control,
to corresponding loss functions in NN training.
Similar strategies are conceivable for other variational formulations,
such as e.g. the ``deep Ritz'' approach of \cite{EYuDeepRitz}.
Rather than training approximating NNs by minimizing the quadratic potential energy,
NN training could be based 
on reliable and efficient residual a posteriori error estimators.

The proposed deep FoSLS LSQ formulation
extends to high-dimensional, parametric PDEs,
as arise in Uncertainty Quantification.
There, a LSQ regression formulation
in the product space $L^2(\domain)\otimes L^2_\bbP(P)$
for a suitable probability space $(P,\cA,\bbP)$ on the parameter domain $P$
naturally implies variance optimal, maximum likelihood estimator
properties of the corresponding optimal NN approximations.
The corresponding parametric solution families are
minimizers of suitable FoSLS formulations over
NNs resolving parametric solutions families.
Relevant issues,
such as the parsimonious emulation of polynomial chaos
expansions of parametric solution families,
and
extension to nonlinear PDEs shall be discussed elsewhere.
\subsection*{Acknowledgement} \label{sec:Ackn}
P.P was supported by the Austrian Science Fund (FWF) [P37010].
{\small
\bibliography{bibliography}
}

\end{document}